\newcommand{\myParagraph}[1]{\medskip\noindent \textbf{#1.}}
\newtheorem{Proposition}{Proposition}
\newtheorem{Theorem}{Theorem}
\newtheorem{Lemma}{Lemma}
\newtheorem{Remark}{Remark}
\newtheorem{Corollary}{Corollary}
\newcommand{\dt}{\,\partial_t\, }
\newcommand{\dx}{\,\partial_x\, }
\newcommand{\ddt}{\frac{\dd}{\dd t}}
\newcommand{\dxx}{\,\partial_{xx}\, }
\newcommand{\dtx}{\,\partial_{tx}\, }
\newcommand{\dd}{\,\mathrm{d}}
\newcommand{\R}{\mathbb{R}}
\newcommand{\TT}{\mathbb{T}}
\newcommand{\LL}{\mathsf{L}}
\newcommand{\II}{\mathcal{I}}
\newcommand{\J}{\mathcal{J}}
\newcommand{\fg}{\mathfrak{f}}
\newcommand{\ggot}{\mathfrak{g}}
\newcommand{\Fg}{\mathfrak{F}}
\newcommand{\fgt}{\tilde{\mathfrak{f}}}
\newcommand{\ggt}{\tilde{\mathfrak{g}}}
\newcommand{\hg}{\mathfrak{h}}
\newcommand{\hgt}{\tilde{\mathfrak{h}}}
\newcommand{\Fgt}{\widetilde{\mathfrak{F}}}
\newcommand{\iph}{{i+\frac{1}{2}}}
\newcommand{\imh}{{i-\frac{1}{2}}}
\newcommand{\jph}{{j+\frac{1}{2}}}
\newcommand{\jmh}{{j-\frac{1}{2}}}
\newcommand{\inv}{^{-1}}
\newcommand{\rhoinf}{\rho_\infty}
\newcommand{\rhoinfs}{\rho_\infty^*}
\newcommand{\overbar}[1]{\mkern 1.5mu\overline{\mkern-1.5mu#1\mkern-1.5mu}\mkern 1.5mu}
\numberwithin{equation}{section}
\title{Discrete hypocoercivity for a nonlinear kinetic reaction model}
\author{Marianne Bessemoulin-Chatard}
\address[Marianne Bessemoulin-Chatard]{Nantes Université, CNRS, Laboratoire de Mathématiques Jean Leray, LMJL,
UMR 6629, F-44000 Nantes, France}
\author{Tino Laidin}
\address[Tino Laidin]{Univ. Lille, CNRS, Inria, UMR 8524 - Laboratoire Paul Painlevé, F-59000 Lille, France}
\author{Thomas Rey}
\address[Thomas Rey]{Université Côte d’Azur, CNRS, LJAD, Parc Valrose, F-06108 Nice, France}
\email{marianne.bessemoulin@univ-nantes.fr}
\email{tino.laidin@univ-lille.fr}
\email{thomas.rey@univ-cotedazur.fr}
\begin{document}

\begin{abstract}
In this article, we propose a finite volume discretization of a one dimensional nonlinear reaction kinetic model proposed in \cite{NeumannSchmeiser2016}, which describes a 2-species recombination-generation process. Specifically, we establish the long-time convergence of approximate solutions towards equilibrium, at exponential rate. The study is based on an adaptation for a discretization of the linearized problem of the $L^2$ hypocoercivity method introduced in \cite{DolbeaultMouhotSchmeiser2015}. From this, we can deduce a local result for the discrete nonlinear problem. As in the continuous framework, this result requires the establishment of a maximum principle, which necessitates the use of monotone numerical fluxes. 

    \textsc{2020 Mathematics Subject Classification:} 82B40, 
    65M08, 
    65M12. 
\end{abstract}

\keywords{kinetic equations, hypocoercivity, maximum principle, finite volume methods, large time behavior}

\maketitle


\section{Introduction}

\subsection{On a kinetic generation-recombination model}

In this article, we construct and analyze the decay to equilibrium of a finite volume scheme for a one-dimensional kinetic relaxation model describing a generation-recombination reaction of two species proposed in \cite{NeumannSchmeiser2016}, which can be seen as a simplified version of models describing generation and recombination of electron-hole pairs in semiconductors (see \textit{e.g.}~\cite{DegondNouriSchmeiser2000}). More precisely, we consider the system
\begin{align}
    &\dt \fg + v\dx \fg = \chi_1 - \rho_\ggot \fg, \label{eq_f_nonlin}\\
    &\dt \ggot + v\dx \ggot = \chi_2 - \rho_\fg \ggot, \label{eq_g_nonlin}
\end{align}
where $\fg$ and $\ggot$ represent the phase space densities of two chemical reactants A and B. These reactants are produced by the decomposition of a substance C (whose density is assumed to be fixed) with nonnegative velocity profiles $\chi_1$ and $\chi_2$, and can also recombine to form C and thus be eliminated from the system. The densities $\fg$ and $\ggot$ depend on time $t\geq 0$, position $x \in \TT$ the one-dimensional torus, and velocity $v\in\R$. The probability of the reaction depends on the position density of the reaction partner, defined by
\begin{equation}\label{def_densities}
    \rho_{\hg}(t,x)\coloneqq\int_\R \hg(t,x,v)\,\dd v, \qquad \hg=\fg,\,\ggot.
\end{equation}
The system \eqref{eq_f_nonlin}--\eqref{eq_g_nonlin} is completed by the initial condition
\begin{equation}\label{CI}
    \fg(0,x,v)=\fg_I(x,v)\geq 0,\quad \ggot(0,x,v)=\ggot_I(x,v)\geq 0.
\end{equation}
Since the chemical reaction under consideration is assumed to be reversible, it is required that $\int_\R (\chi_1 -\chi_2)\,\dd v=0$. We moreover make the following assumptions about the moments of the velocity profiles, which are given positive functions of $v$: for $k=1,\,2$,
\begin{equation}\label{hyp_chi}
    \begin{gathered}
        \int_\R \chi_k\,\dd v=1,\qquad \int_\R v\,\chi_k\,\dd v=0,\\
        D_k\coloneqq\int_\R v^2\,\chi_k\,\dd v<\infty,\quad Q_k\coloneqq\int_\R v^4\,\chi_k\,\dd v<\infty.
    \end{gathered}
\end{equation}

The purpose of this article is to propose a numerical scheme for the system \eqref{eq_f_nonlin}--\eqref{eq_g_nonlin} for which we are able to rigorously study the long-time behavior. More precisely, remarking that the mass difference is conserved:
\begin{equation*}
    \frac{\dd}{\dd t}\int_{\TT\times\R}(\fg-\ggot)\,\dd v\,\dd x=0,
\end{equation*}
let us introduce the unique constant $\rho_\infty>0$ such that
\begin{equation}\label{def_rhoinf}
    \int_{\TT\times\R}(\fg_I-\ggot_I)\,\dd v\,\dd x=|\TT|\left(\rho_\infty-\frac{1}{\rho_\infty}\right).
\end{equation}
The equilibrium state $\Fg_\infty=(\fg_\infty,\ggot_\infty)$, depending only on velocity, is then defined by
\begin{equation}\label{def_eq}
    \fg_\infty(x,v)\coloneqq\rho_\infty\,\chi_1(v),\qquad \ggot_\infty(x,v)\coloneqq\frac{1}{\rho_\infty}\chi_2(v).
\end{equation}

In \cite{NeumannSchmeiser2016}, the exponential decay to equilibrium (what we shall call in this paper hypocoercivity) of the solution to the linearization of \eqref{eq_f_nonlin}--\eqref{eq_g_nonlin} is established by applying the method proposed in \cite{DolbeaultMouhotSchmeiser2015}. Then, this result is extended to a local result for the nonlinear case, thanks to the proof of maximum principle estimates for the nonlinear problem.

\subsection{Hypocoercivity}

Due to the relaxation structure of the right hand side of system \eqref{eq_f_nonlin}--\eqref{eq_g_nonlin} and to the mixing properties of the free transport operator on the torus, one can naturally expect that the solutions to this type of problem  will exhibit a fast relaxation towards $\Fg_\infty$.
This is a classical problem, which can be traced back to the seminal work of Hörmander \cite{Hoermander1967} on the hypoellipticity of linear operators. 
    
The first partial proof of this large time behavior can be found in \cite{Bensoussan1979}.
    It was then proved for a very large class of collision operators (with or without confinement in velocity) in  \cite{HerauNier2004,Herau2017}, and in the series of papers \cite{DolbeaultMouhotSchmeiser2015,Bouin2020} (see also the references therein) that for a suitable norm, the rate of convergence towards the equilibrium is exponential: there exists some constants $\lambda >0$ and $C \geq 1$ such that
    \begin{equation*}
      \left \| \hg(t) - \hg_\infty \right \|_\mathcal{X} \leq C \left \| \hg_I - \hg_\infty \right \|_\mathcal{X}  e^{-\lambda t},
    \end{equation*}
    in a well chosen Hilbert space $\mathcal{X}$. We shall call this type of behavior \emph{hypocoercivity} \cite{VillaniHypo2009} (the case $C=1$ corresponding to classical coercive behavior).
     
    A very {robust proof} for establishing such a result can be found in the seminal work \cite{DolbeaultMouhotSchmeiser2015}. It introduces a so-called ``modified entropy functional'' that  is equivalent to a weighted $L^2$ norm and decays exponentially fast toward the global Maxwellian equilibrium.
    This technique was then used extensively in many applications, showing its robustness and practicality. One can cite for example its use on the kinetic Keller-Segel-type chemotaxis models (the so-called Othmer, Dunbar and Alt model) in \cite{CalvezRaoulSchmeiser2015}; on the study of the fractionnal Fokker-Planck equation in \cite{BouinDolbeaultLafleche2022}; on the fine properties of a large class of Vlasov-Fokker-Planck models in \cite{BouinDolbeaultZiviani2023}; on the large time behavior of the Vlasov-Poisson-Fokker-Planck equation in \cite{Addala2021}.
    Note that this technique has been very recently refined to singular problems, with motivation in control theory in \cite{DietertHerauHutridurgaMouhot2022}.

    \smallskip
    
    The goal of the current work is to establish hypocoercive properties of finite volume types discretizations of the nonlinear system \eqref{eq_f_nonlin}--\eqref{eq_g_nonlin}. Such an objective has been classical in the last decade for numerical approximations of macroscopic models exhibiting an entropic structure, see for example the classical works \cite{BurgerCarrilloWolfram2010,GosseToscani2006,ChainaisFilbet2007,BessemoulinChatardFilbet2012}.  A particular emphasis on the accurate discretization of steady states was done in \cite{Filbet2015,PareschiRey2017}. These results were then extended to problems with nonhomogeneous boundary conditions using the so-called $\phi$-entropy method in \cite{FilbetHerda2017,ChainaisHillairetHerda2020}. 
    
    Extension to numerical discretizations of  kinetic models which are able to reproduce accurately this type of behavior then becomes natural. This can be considered as a special case of Asymptotic Preserving (AP) schemes, where the asymptotic regime preserved is the large time behavior of the model. 
    The first works on the subject considered the discretization of the so-called Kolmogorov master equation, which is a simplified kinetic model where the collision operator is only a Laplacian in velocity. Both finite differences and finite elements approaches were considered in the papers \cite{PorrettaZuazua2017, Georgoulis2018}. Such AP schemes were then recently used to stabilize a finite element solver in \cite{dong2024hypocoercivityexploiting}.
    The first numerical work ``à la Villani'' about the $H^1$-hypocoercivity of the linear kinetic Fokker-Planck equation using a finite difference approach was then published in \cite{DujardinHerauLaffitte2018}. It uses the framework introduced in \cite{VillaniHypo2009} to establish the hypocoercive properties of the numerical scheme. 
    The $L^2$ method from \cite{DolbeaultMouhotSchmeiser2015} was then used to establish both hypocoercivity properties and uniform stability of a finite volume scheme for both the linear BGK and kinetic Fokker-Planck equations in \cite{BessemoulinHerdaRey2020}. This is the approach we will generalize in the current work.
    Let us also mention the recent similar work about the hypocoercive properties of a finite volume method for the fractional Fokker-Planck equation in \cite{AyiHerdaHivertTristani2023}.
    Finally, extensions of the $L^2$ method to the Vlasov equation (both with an external electric field or the linearized Vlasov-Poisson system), using an Hermite expansion  in the velocity space was recently developed in \cite{BlausteinFilbet2024_1,BlausteinFilbet2024_2}.



\smallskip
The outline of this article is as follows. In Section \ref{sec:continuous}, we briefly present the results in the continuous setting, since we are going to adapt them in the discrete framework in the following. In Section \ref{sec:discrete_setting}, we introduce the discrete setting, in particular the definition of the numerical schemes for the nonlinear problem \eqref{eq_f_nonlin}--\eqref{eq_g_nonlin} and for its linearization around the equilibrium. Section \ref{sec:hypoco_lin} is devoted to the adaptation of the $L^2$-hypocoercivity method of \cite{DolbeaultMouhotSchmeiser2015} for the discretization of the linearized problem. Then in Section \ref{sec:discrete_nonlin}, we establish the discrete counterpart of the local result for the nonlinear case. As in the continuous framework, it requires maximum principle estimates, which necessitates the use of monotone numerical fluxes. This motivates our choice of the Lax-Friedrichs fluxes for the transport terms, unlike the centered fluxes used in \cite{BessemoulinHerdaRey2020}. Finally, in Section \ref{sec:numeric}, we present some numerical experiments to illustrate the obtained theoretical results.



\section{The continuous setting}\label{sec:continuous}

In this section, we recall the main lines of the result in the continuous framework \cite{NeumannSchmeiser2016}. The decay towards equilibrium will be estimated quantitatively in the following weighted $L^2$ space:
\begin{equation}\label{def_espaceL2}
    \mathcal{H}\coloneqq L^2(\TT \times\R,\dd x\,\dd v/\chi_1)\times L^2(\TT\times \R,\dd x\,\dd v/\chi_2),
\end{equation}
endowed with the scalar product weighted with the steady state measure:
 \begin{equation*}
        \left\langle F_1, F_2\right\rangle=\int_{\TT\times\R} \left(\frac{f_1\,f_2}{\rho_\infty\,\chi_1} + \frac{g_1\,g_2\,\rho_\infty}{\chi_2}\right) \dd v\dd x, \quad \text{for }F_k=\begin{pmatrix}f_k\\g_k\end{pmatrix}.
    \end{equation*}
The corresponding norm is denoted by $\|\cdot\|$.

\subsection{The linearized case}

Let us now introduce the linearization of the system \eqref{eq_f_nonlin}--\eqref{eq_g_nonlin} around the equilibrium $\Fg_\infty$. We shall denote by  $(f,g)$ the perturbations $\fg=\fg_\infty+\varepsilon\,f$, $\ggot=\ggot_\infty+\varepsilon\,g$. Formally, in the limit $\varepsilon \to 0$, one gets the following linearized problem:
\begin{align}
        &\dt f + v\dx f = -\rhoinf\chi_1\rho_g - \rhoinf\inv f, \label{eq_f_lin}\\
        &\dt g + v\dx g = -\rhoinf\inv\chi_2\rho_f-\rhoinf g.\label{eq_g_lin}
\end{align}
Remark that the perturbations $f$ and $g$ now satisfy
\[ \int_{\TT\times\R}(f-g)\,\dd x\,\dd v=\int_\TT (\rho_f-\rho_g)\,\dd x = 0.\]
The orthogonal projection onto the null space of the linearized collision operator
\begin{equation*}
        \LL F=\begin{pmatrix}
            -\rhoinf\chi_1\rho_g - \rhoinf\inv f \\
            -\rhoinf\inv\chi_2\rho_f-\rhoinf g
        \end{pmatrix},\quad \text{for }F=\begin{pmatrix}
            f\\ g
        \end{pmatrix},
    \end{equation*}
is given by
\begin{equation*}
        \Pi F = \frac{\rho_f-\rho_g}{\rhoinf^2+1}\begin{pmatrix}
            \rhoinf^2\chi_1\\ -\chi_2
        \end{pmatrix}.
\end{equation*}
First of all, the following microscopic coercivity estimate can be established.
\begin{Lemma}[Microscopic coercivity]\label{lem_coer_micro_continu}
Let \eqref{hyp_chi} hold and let $F=(f,g)$ be the solution to the linearized system \eqref{eq_f_lin}--\eqref{eq_g_lin} with initial data $F_I=(f_I,g_I)\in\mathcal{H}$ satisfying $\int_{\TT\times\R}(f_I-g_I)\,\dd x\,\dd v=0$.

Then for every $t\geq 0$,
\begin{equation*}
    \frac{1}{2}\frac{\dd}{\dd t}\|F(t)\|^2+C_{mc}\|(I-\Pi)F(t)\|^2\leq 0,on \quad C_{mc}=\min(\rhoinf,\rhoinf\inv).
\end{equation*}
\end{Lemma}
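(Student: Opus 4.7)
The plan is to differentiate $\tfrac{1}{2}\|F(t)\|^2$ in time, eliminate the transport contribution by integration by parts on $\TT$, and then show that the remaining collisional dissipation $-\langle F,\LL F\rangle$ already controls $C_{mc}\|(I-\Pi)F\|^2$. Using \eqref{eq_f_lin}--\eqref{eq_g_lin} one has $\tfrac{1}{2}\tfrac{\dd}{\dd t}\|F\|^2 = -\langle F, v\dx F\rangle + \langle F,\LL F\rangle$. Since $\chi_1,\chi_2$ depend only on $v$, one has $\langle F,v\dx F\rangle = \tfrac{1}{2}\int_{\TT\times\R}\left(\tfrac{v\,\partial_x(f^2)}{\rhoinf\chi_1}+\tfrac{v\rhoinf\,\partial_x(g^2)}{\chi_2}\right)\dd v\,\dd x$, which vanishes by the $x$-integration on the torus. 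Hence $\tfrac{1}{2}\tfrac{\dd}{\dd t}\|F\|^2 = \langle F,\LL F\rangle$.

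The key observation is that $\LL$ is self-adjoint in $\HH$ with $\Pi\LL = \LL\Pi = 0$. Self-adjointness is immediate since $\langle F_1,\LL F_2\rangle$ produces the manifestly symmetric expression $-\int(f_1\rho_{g_2}+f_2\rho_{g_1})\,\dd v\,\dd x - \int\tfrac{f_1 f_2}{\rhoinf^2\chi_1}\,\dd v\,\dd x - \int\tfrac{\rhoinf^2 g_1 g_2}{\chi_2}\,\dd v\,\dd x$. The relation $\Pi\LL = 0$ follows from a direct computation: both $v$-integrals of the components of $\LL F$ equal $-\rhoinf\rho_g - \rhoinf\inv\rho_f$, so their difference vanishes; by self-adjointness $\LL\Pi = 0$ as well. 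Consequently $\langle F,\LL F\rangle = \langle G,\LL G\rangle$ with $G := (I-\Pi)F =: (\tilde f,\tilde g)$, and by construction $\rho_{\tilde f} = \rho_{\tilde g} =: \sigma$.

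An explicit expansion then gives
$$-\langle G,\LL G\rangle = 2\int_\TT\sigma^2\,\dd x + \int_{\TT\times\R}\left(\frac{\tilde f^2}{\rhoinf^2\chi_1}+\frac{\rhoinf^2\tilde g^2}{\chi_2}\right)\dd v\,\dd x,$$
where the cross term $2\int \tilde f\rho_{\tilde g} + \tilde g\rho_{\tilde f}$ has collapsed to $2\int\sigma^2$ using $\rho_{\tilde f}=\rho_{\tilde g}$. Since $C_{mc} = \min(\rhoinf,\rhoinf\inv)$ satisfies $C_{mc}\leq\rhoinf\inv$ and $C_{mc}\leq\rhoinf$, we get $1/\rhoinf^2 \geq C_{mc}/\rhoinf$ and $\rhoinf^2 \geq C_{mc}\rhoinf$, so the second integral dominates $C_{mc}\|G\|^2$, and the $2\int\sigma^2 \geq 0$ part is a nonnegative bonus. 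This yields $-\langle F,\LL F\rangle \geq C_{mc}\|(I-\Pi)F\|^2$, which is the claim.

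The main subtlety I anticipate is handling the sign of the cross term $2\int_\TT\rho_f\rho_g\,\dd x$ that appears naturally in $-\langle F,\LL F\rangle$: on a generic $F\in\HH$ this quantity is a priori indefinite, but the projection step converts it into $2\int_\TT\sigma^2 \geq 0$. This sign flip is exactly what microscopic coercivity extracts from the null space structure of $\LL$, and it is the heart of the $L^2$-hypocoercivity decomposition applied to this two-species system; everything else reduces to direct computation and the elementary inequalities built into the definition of $C_{mc}$.
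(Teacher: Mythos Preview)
Your proof is correct and follows essentially the same strategy as the paper: show that the transport contribution vanishes so that $\tfrac{1}{2}\tfrac{\dd}{\dd t}\|F\|^2=\langle \LL F,F\rangle$, then bound $-\langle \LL F,F\rangle$ from below by $C_{mc}\|(I-\Pi)F\|^2$. Your explicit use of $\Pi\LL=\LL\Pi=0$ to reduce the computation to $G=(I-\Pi)F$ (where $\rho_{\tilde f}=\rho_{\tilde g}$ turns the cross term into the manifestly nonnegative $2\int_\TT\sigma^2\,\dd x$) is a clean way to organize what the paper simply calls ``straightforward computations''.
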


\begin{proof}
    Since $F$ is the solution to \eqref{eq_f_lin}--\eqref{eq_g_lin}, 
    \[\frac{1}{2}\frac{\dd}{\dd t}\|F(t)\|^2=\langle \LL F,F\rangle.\]
    Then straightforward computations using only the fact that $\int_\R \chi_k\,\dd v=1$ (see \cite{NeumannSchmeiser2016}) show that 
    \begin{equation}\label{eq:LFscalF_cont}
        -\langle \LL F,F\rangle \geq \min(\rhoinf,\rhoinf\inv)\,\|(I-\Pi)F\|^2,
    \end{equation}
    which concludes the proof.
\end{proof}

Let us now introduce the moments of $h\coloneqq f-g$:
\begin{equation}\label{def_moments_continus}
    u_h\coloneqq\int_\R h\,\dd v,\quad J_h\coloneqq\int_\R v\,h\,\dd v,\quad S_h\coloneqq\int_\R (v^2-D_0)\,h\,\dd v,
\end{equation}
where 
\begin{equation}\label{def_D0}
    D_0=\frac{\rho_\infty^2D_1+D_2}{\rho_\infty^2+1},
\end{equation}
$D_k$ being defined in \eqref{hyp_chi}.

By subtracting \eqref{eq_g_lin} from \eqref{eq_f_lin}, multiplying by $(1,v)$ and integrating with respect to $v$, one obtains the following moments equations for $h$:
\begin{align}
    & \dt u_h + \dx J_h = 0, \label{eq_uh}\\
    & \dt J_h + \dx S_h + D_0\dx u_h = -(\rhoinf\inv J_f-\rhoinf J_g), \label{eq_Jh}
\end{align}
where $J_f$ and $J_g$ are the first order moments of $f$ and $g$ respectively.

\begin{Lemma}[Moments estimates]\label{lem_moments_cont}
    Under assumptions \eqref{hyp_chi}, the moments $u_h$, $J_h$ and $S_h$ satisfy the following estimates:
    \begin{gather}
        \|u_h\|_{L^2(\TT)}=C_u\|\Pi F\|, \label{estim_uh_cont} \\
        \|J_h\|_{L^2(\TT)}\leq C_{J1}\|F\|, \label{estim_Jh1_cont}\\
        \|J_h\|_{L^2(\TT)}\leq C_{J1}\|(I-\Pi)F\|,\label{estim_Jh2_cont}\\
        \|S_h\|_{L^2(\TT)}\leq C_S\|(I-\Pi)F\|, \label{estim_Sh_cont}\\
        \|\rhoinf\inv J_f-\rhoinf J_g\|_{L^2(\TT)}\leq C_{J2}\|(I-\Pi)F\|, \label{estim_JfJg_cont}
    \end{gather}
    where the constants are given by:
    \begin{align}
        C_u &= \sqrt{\frac{\rhoinf^2+1}{\rhoinf}},\\
        C_{J1} &= \sqrt{2\max(\rhoinf D_1,\rhoinf\inv D_2)},\\
        C_{S} &= \sqrt{2\max(\rhoinf(Q_1-2D_0D_1+D_0^2),\rhoinf\inv(Q_2-2D_0D_2+D_0^2))},\\
        C_{J2} &= \max(\rhoinf\inv,\rhoinf)C_{J1}.
    \end{align}
\end{Lemma}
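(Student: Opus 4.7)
The plan is to prove the five bounds essentially by direct velocity integration combined with Cauchy--Schwarz, exploiting the specific form of the projector $\Pi$ and the carefully chosen constant $D_0$.

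I would start with \eqref{estim_uh_cont}, the only equality. Using $\int_\R \chi_k\,\dd v=1$, one computes directly from the definition of the weighted scalar product
\[
\|\Pi F\|^2 = \int_\TT \frac{(\rho_f-\rho_g)^2}{(\rhoinf^2+1)^2}\left(\rhoinf^3\int_\R\chi_1\,\dd v + \rhoinf\int_\R\chi_2\,\dd v\right)\dd x = \frac{\rhoinf}{\rhoinf^2+1}\|u_h\|_{L^2(\TT)}^2,
\]
where $u_h=\rho_f-\rho_g$; rearranging gives the claimed equality with $C_u$.

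For \eqref{estim_Jh1_cont}, I would apply Cauchy--Schwarz in $v$ to each of $J_f$ and $J_g$ using the weights $\chi_1$ and $\chi_2$ respectively, yielding $|J_f|^2\le D_1\int f^2/\chi_1\,\dd v$ and analogously for $g$. Multiplying by the appropriate $\rhoinf^{\pm 1}$ to match the $\HH$-norm weights and combining via $|J_h|^2\le 2(|J_f|^2+|J_g|^2)$, integration in $x$ produces \eqref{estim_Jh1_cont}. The upgrade to \eqref{estim_Jh2_cont} is the crucial observation that both components of $\Pi F$ are of the form (function of $x$)$\times\chi_k(v)$; hence the first assumption of \eqref{hyp_chi}, namely $\int_\R v\,\chi_k\,\dd v=0$, gives $J_{\Pi F_1}=J_{\Pi F_2}=0$. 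Consequently $J_h=J_{(I-\Pi)F_1}-J_{(I-\Pi)F_2}$, so the same estimate applied to $(I-\Pi)F$ yields \eqref{estim_Jh2_cont}. Estimate \eqref{estim_JfJg_cont} is obtained the same way by keeping the factors $\rhoinf^{\mp 1}$ in front of $J_f, J_g$ and bounding them separately.

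For \eqref{estim_Sh_cont}, the key algebraic point is that $D_0$ defined by \eqref{def_D0} is precisely the unique constant making the $v^2-D_0$ moment of $\Pi F$ vanish:
\[
\int_\R(v^2-D_0)[\rhoinf^2\chi_1+\chi_2]\,\dd v = \rhoinf^2 D_1+D_2-D_0(\rhoinf^2+1)=0.
\]
Hence $S_{\Pi F}=0$, so $S_h=S_{(I-\Pi)F}$ and I can repeat the Cauchy--Schwarz argument with the weight $(v^2-D_0)^2\chi_k$, whose integral is exactly $Q_k-2D_0D_k+D_0^2$, yielding $C_S$.

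None of the individual steps is technically deep — they are all variations of the same Cauchy--Schwarz argument on velocity integrals. The one step that requires genuine insight, and the only place where the specific structure of the problem enters non-trivially, is recognizing that $D_0$ has been defined precisely so that $S_{\Pi F}\equiv 0$; without this cancellation, one would only get a bound in terms of $\|F\|$ rather than $\|(I-\Pi)F\|$, which would be insufficient for the hypocoercivity argument that follows. I would therefore highlight this identity explicitly as the crux of the lemma.
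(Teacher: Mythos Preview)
Your proposal is correct and follows essentially the same route as the paper's proof: direct computation of $\|\Pi F\|^2$ for \eqref{estim_uh_cont}, Cauchy--Schwarz in $v$ with the weights $\chi_k$ for the remaining inequalities, and the observation that the first moments of $\Pi F$ vanish (giving \eqref{estim_Jh2_cont}, \eqref{estim_JfJg_cont}) while the choice of $D_0$ makes the $(v^2-D_0)$-moment of $\Pi F$ vanish (giving \eqref{estim_Sh_cont}). Your emphasis on the role of $D_0$ as the crux is apt and matches the paper's use of this identity.
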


\begin{proof}
Let us start with equality \eqref{estim_uh_cont}. By definition of $\|\Pi F\|$ and using the first property in \eqref{hyp_chi} one has:
\begin{align*}
    \|\Pi F\|^2 &= \frac{1}{(\rhoinf^2+1)^{2}} \int_\TT\int_\R (\rho_f-\rho_g)^2 \left(\frac{\rhoinf^4\,\chi_1^2}{\chi_1\rhoinf} + \frac{\chi_2^2\,\rhoinf}{\chi_2}\right) \dd v\dd x\\
    &= \frac{\rhoinf}{(\rhoinf^2+1)^{2}}\int_\TT (\rho_f-\rho_g)^2(\rhoinf^2+1)\dd x\\
    &= \frac{\rhoinf}{\rhoinf^2+1}\|u_h\|_{L^2(\TT)}^2.
\end{align*}
Now for \eqref{estim_Jh1_cont}, by definition of $J_h$ and using the identity $(a-b)^2\leq2(a^2+b^2)$ one has
\begin{equation}\label{estim_Jh_Comp}
    \|J_h\|_{L^2(\TT)}^2 \leq 2\int_\TT \left(\int_\R  v\,f\frac{\sqrt{\rhoinf\chi_1}}{\sqrt{\rhoinf\chi_1}}\dd v\right)^2\dd x+2\int_\TT \left(\int_\R v\,g\sqrt{\frac{\rhoinf}{\chi_2}}\sqrt{\frac{\chi_2}{\rhoinf}}\dd v \right)^2\dd x.
\end{equation}
This yields thanks to Cauchy-Schwarz inequality
\begin{equation*}
    \|J_h\|_{L^2(\TT)}^2 \leq 2\int_\TT\left(\rhoinf\,D_1\int_\R \frac{f^2}{\rhoinf\,\chi_1}\,\dd v+\frac{D_2}{\rhoinf}\int_\R\frac{g^2\,\rhoinf}{\chi_2}\,\dd v\right)\dd x,
\end{equation*}
from which we deduce \eqref{estim_Jh1_cont}.

The second estimate \eqref{estim_Jh2_cont} on $\|J_h\|_{L^2(\TT)}$ is obtained by observing that using the second property in \eqref{hyp_chi}, one has
\begin{equation*}
    \|J_h\|_{L^2(\TT)}^2\leq 2\int_\TT \left[\left(\int_\R v\left(f-\frac{u_h}{\rhoinf^2+1}\rhoinf^2\,\chi_1\right)\dd v\right)^2+\left(\int_\R v\left(g+\frac{u_h}{\rhoinf^2+1}\,\chi_2\right)\dd v\right)^2\right]\dd x,
\end{equation*}
from which we deduce using Cauchy-Schwarz inequality
\begin{align*}
    \|J_h\|_{L^2(\TT)}^2\leq & \,2\int_\TT \rhoinf\,D_1\int_\R\left(f-\frac{u_h}{\rhoinf^2+1}\,\rhoinf^2\,\chi_1\right)^2\frac{1}{\rhoinf\,\chi_1}\dd v\dd x  \\
    & +2\int_\TT\frac{D_2}{\rhoinf}\int_\R \left(g+\frac{u_h}{\rhoinf^2+1}\,\chi_2
\right)^2\frac{\rhoinf}{\chi_2}\dd v\dd x,
\end{align*}
from which we obtain \eqref{estim_Jh2_cont}.

Regarding the estimation \eqref{estim_Sh_cont} of $\|S_h\|_{L^2(\TT)}$, we notice that by definition \eqref{def_D0} of $D_0$ 
\begin{equation*}
    \int_\R (v^2-D_0)\frac{u_h}{\rhoinf^2+1}(\rhoinf^2\chi_1+\chi_2)\dd v = 0,
\end{equation*}
and as previously, we use that $(a-b)^2\leq2(a^2+b^2)$, leading to the estimate
\begin{align*}
    \|S_h\|^2_{L^2(\TT)} \leq 2\int_\TT &\left(\int_\R (v^2-D_0)\left(f-\frac{u_h}{\rhoinf^2+1}\rhoinf^2\chi_1\right)\dd v\right)^2\dd x\\
    +2\int_\TT &\left(\int_\R (v^2-D_0)\left(g+\frac{u_h}{\rhoinf^2+1}\chi_2\right)\dd v \right)^2\dd x.\\
\end{align*}
The right hand side can then be rewritten
\begin{align*}
    \|S_h\|^2_{L^2(\TT)} \leq2\int_\TT &\left(\int_\R (v^2-D_0)\frac{\sqrt{\chi_1\rhoinf}}{\sqrt{\chi_1\rhoinf}}\left(f-\frac{u_h}{\rhoinf^2+1}\rhoinf^2\chi_1\right)\dd v\right)^2\dd x\\
    &+2\int_\TT \left(\int_\R (v^2-D_0)\sqrt{\frac{\rhoinf}{\chi_2}}\sqrt{\frac{\chi_2}{\rhoinf}}\left(g+\frac{u_h}{\rhoinf^2+1}\chi_2\right)\dd v \right)^2\dd x,
\end{align*}
and \eqref{estim_Sh_cont} is obtained after applying Cauchy-Schwarz inequality. 

Let us finally turn to the last estimate \eqref{estim_JfJg_cont}. We have
\begin{align*}
   \|\rhoinf\inv J_f-\rhoinf J_g\|^2_{L^2(\TT)} &= \int_\TT \left(\rhoinf^{-1}\int_\R vf\dd v-\rhoinf\int_\R vg\dd v \right)^2\dd x\\
    &\leq 2\max\{\rhoinf^{-2},\rhoinf^2\}\int_\TT \left[\left(\int_\R vf\dd v\right)^2+\left(\int_\R vg\dd v \right)^2\right]\dd x.
\end{align*}
The right hand side is then treated as previously.
\end{proof}

Let us now state the hypocoercivity result for the linearized problem. 

\begin{Proposition}\label{prop_hypoco_lin_cont}
There are constants $C\geq 1$ and $\kappa >0$ such that for all initial data $F_I=(f_I,g_I) \in \mathcal{H}$ such that $\int_{\TT\times\R}(f_I-g_I)\,\dd x\,\dd v=0$, the solution $F=(f,g)$ of \eqref{eq_f_lin}--\eqref{eq_g_lin} satisfies
\begin{equation*}
    \|F(t)\|\leq C\,\|F_I\|\,e^{-\kappa\,t}.
\end{equation*}
\end{Proposition}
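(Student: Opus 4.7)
The plan is to adapt the $L^2$ hypocoercivity framework of \cite{DolbeaultMouhotSchmeiser2015} in a form tailored to the two–species structure. Let $\T F = v\dx F$ denote the (skew–adjoint in $\mathcal{H}$) transport operator and introduce the modified entropy
\begin{equation*}
\HH_\varepsilon(F) \coloneqq \tfrac12 \|F\|^2 + \varepsilon \,\langle A F, F\rangle,
\qquad A \coloneqq \bigl(I + (\T\Pi)^\ast(\T\Pi)\bigr)^{-1}(\T\Pi)^\ast,
\end{equation*}
for a small parameter $\varepsilon>0$ to be fixed. The first thing to check is the functional equivalence $|\langle AF,F\rangle|\leq \tfrac12\|F\|^2$ (which follows from the standard spectral bound $\|A\|\leq 1/2$), so that $\HH_\varepsilon$ is equivalent to $\|F\|^2$ provided $\varepsilon<1$.

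Next I would compute $\ddt \HH_\varepsilon$ along solutions of \eqref{eq_f_lin}--\eqref{eq_g_lin}. Using $\ddt\|F\|^2/2 = \langle\LL F,F\rangle$ together with Lemma \ref{lem_coer_micro_continu}, the first piece contributes $-C_{mc}\|(I-\Pi)F\|^2$. The cross term produces, after splitting $F=\Pi F+(I-\Pi)F$ and using that $\Pi\LL\Pi=0$, the expansion
\begin{equation*}
\varepsilon\,\ddt\langle A F,F\rangle = -\varepsilon\,\langle A\T\Pi F,F\rangle + R_\varepsilon(F),
\end{equation*}
where $R_\varepsilon(F)$ collects the remainder terms $\varepsilon\langle A\T(I-\Pi)F,F\rangle$, $\varepsilon\langle A\LL F,F\rangle$ and $\varepsilon\langle (\T A)^\ast F,F\rangle$. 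The principal term $-\varepsilon\langle A\T\Pi F,\Pi F\rangle$ is by construction equal to $-\varepsilon\,\|(\T\Pi)(I+(\T\Pi)^\ast(\T\Pi))^{-1/2}\Pi F\|^2$, and encodes the \emph{macroscopic coercivity}: this is where I would use \eqref{estim_uh_cont} of Lemma \ref{lem_moments_cont} together with the moment equation \eqref{eq_uh} (which identifies $\T\Pi F$ with a multiple of $\dx u_h$) and Poincaré's inequality on $\TT$ (recall $u_h$ has zero mean by the conservation of $\int(f-g)$) to deduce a lower bound of the form $c_M\,\|\Pi F\|^2$ for some constant $c_M>0$ depending only on $\rho_\infty$ and the moments $D_k$.

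The remaining terms in $R_\varepsilon(F)$ must be absorbed. For $\varepsilon\langle A\T(I-\Pi)F,F\rangle$ I would use the bound $\|A\T(I-\Pi)\|\leq 1$ obtained by duality, and for $\varepsilon\langle A\LL F,F\rangle$ the estimates \eqref{estim_Jh2_cont} and \eqref{estim_JfJg_cont} of Lemma \ref{lem_moments_cont}, which bound the fluxes entering these terms by $\|(I-\Pi)F\|$. The net outcome is an inequality of the form
\begin{equation*}
\ddt \HH_\varepsilon(F) \leq -\bigl(C_{mc}-\varepsilon\,C_1\bigr)\|(I-\Pi)F\|^2 - \varepsilon\bigl(c_M-\varepsilon\,C_2\bigr)\|\Pi F\|^2,
\end{equation*}
with constants $C_1,C_2>0$ independent of $\varepsilon$. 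I expect the main obstacle to be the careful handling of these cross terms and the verification that the constants $C_1,C_2$ are finite (which hinges precisely on the quantitative moment estimates of Lemma \ref{lem_moments_cont}, in particular the control of $S_h$ via \eqref{estim_Sh_cont} that arises from the appearance of $\dx S_h$ in the flux equation \eqref{eq_Jh}).

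Finally, choosing $\varepsilon>0$ small enough makes both coefficients positive, giving $\ddt \HH_\varepsilon(F)\leq -2\kappa_0\,\HH_\varepsilon(F)$ for some $\kappa_0>0$, and Grönwall's lemma combined with the equivalence $\HH_\varepsilon\sim\|\cdot\|^2$ yields $\|F(t)\|\leq C\,\|F_I\|\,e^{-\kappa t}$ with $\kappa=\kappa_0$ and $C=\sqrt{(1+\varepsilon)/(1-\varepsilon)}$, as claimed.
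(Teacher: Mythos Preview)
Your approach is correct and follows the abstract hypocoercivity scheme of \cite{DolbeaultMouhotSchmeiser2015} in its original form, with the correction term $\langle AF,F\rangle$ built from $A=(I+(\T\Pi)^\ast(\T\Pi))^{-1}(\T\Pi)^\ast$. The paper takes a more concrete route: exploiting the bounded spatial domain, it replaces this abstract term by $\langle J_h,\partial_x\Phi\rangle_{L^2(\TT)}$ where $\Phi$ solves the Poisson equation $-\partial_{xx}\Phi=u_h$ (rather than the full resolvent equation $(1-D_0\partial_{xx})\Psi=u_h$ implicit in the definition of $A$). This simplification, borrowed from \cite{BessemoulinHerdaRey2020}, avoids operator calculus, produces the macroscopic coercivity term $T_3=-D_0C_u^2\|\Pi F\|^2$ directly from \eqref{eq_aux_cont}, and gives fully explicit constants---which matters here because the paper's real objective is to mimic each estimate at the discrete level. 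Your abstract version is cleaner conceptually but requires checking that $A\T(I-\Pi)$ is bounded, which is not merely ``by duality'': this is precisely the auxiliary-operator hypothesis of the DMS method, and it is where the fourth-moment assumption $Q_k<\infty$ and the control of $S_h$ enter, as you correctly identify later. One small slip: your final differential inequality should not carry an $\varepsilon^2$ coefficient on the macroscopic side; after absorbing the cross terms by Young's inequality with a parameter independent of $\varepsilon$, the correct form is rather $-(C_{mc}-\varepsilon C_1)\|(I-\Pi)F\|^2-\tfrac{\varepsilon c_M}{2}\|\Pi F\|^2$, which still yields the conclusion for $\varepsilon$ small.
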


In order to prove this proposition, we introduce following \cite{DolbeaultMouhotSchmeiser2015,NeumannSchmeiser2016} a modified entropy functional, which is as in \cite{BessemoulinHerdaRey2020} a slight simplification of the original version using the fact that we work on a bounded space domain. It reads
\begin{equation}\label{def_entropy_cont}
    H_\delta[F]\coloneqq\frac{1}{2}\|F\|^2+\delta\,\langle J_h,\dx\Phi\rangle_{L^2(\TT)},
\end{equation}
where $\Phi(t,x)$ is the solution to the Poisson equation
\begin{equation}\label{eq_aux_cont}
    -\partial_{xx}\Phi=u_h,\qquad \int_\TT \Phi\,\dd x=0,
\end{equation}
and $\delta>0$ is a small parameter to be chosen later.

\begin{Lemma}\label{lem_estim_phi_cont}
    The function $\Phi$ satisfies for all $t\geq 0$
    \begin{gather}
        \|\dx \Phi(t)\|_{L^2(\TT)}\leq C_P \, C_u\|\Pi F(t)\|, \label{estim_dxphi_cont}\\
        \|\dtx \Phi(t)\|_{L^2(\TT)}\leq C_{J1}\|(I-\Pi) F(t)\|, \label{estim_dtxphi_cont}
    \end{gather}
    where $C_P$ is the Poincaré constant of $\TT$.
\end{Lemma}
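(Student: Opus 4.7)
The plan is to treat the two estimates separately, exploiting the elliptic regularity of the Poisson problem \eqref{eq_aux_cont} together with the moment identities already established in Lemma \ref{lem_moments_cont}.

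For \eqref{estim_dxphi_cont}, the natural move is an $H^1$ energy estimate: multiply $-\partial_{xx}\Phi=u_h$ by $\Phi$, integrate over $\TT$, and integrate by parts to obtain $\|\partial_x \Phi\|_{L^2(\TT)}^2=\langle u_h,\Phi\rangle_{L^2(\TT)}$. Then I apply Cauchy--Schwarz and, using the zero-mean normalization $\int_\TT \Phi\,\dd x=0$, the Poincaré inequality on $\TT$ to bound $\|\Phi\|_{L^2(\TT)}\leq C_P\|\partial_x\Phi\|_{L^2(\TT)}$. Dividing by $\|\partial_x\Phi\|_{L^2(\TT)}$ yields $\|\partial_x\Phi\|_{L^2(\TT)}\leq C_P\|u_h\|_{L^2(\TT)}$, and the identity \eqref{estim_uh_cont} from Lemma \ref{lem_moments_cont} converts $\|u_h\|_{L^2(\TT)}$ into $C_u\|\Pi F\|$, which closes the first bound.

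For \eqref{estim_dtxphi_cont}, the idea is to differentiate the Poisson equation in time and use the continuity equation \eqref{eq_uh}. Applying $\partial_t$ to $-\partial_{xx}\Phi=u_h$ and substituting $\partial_t u_h = -\partial_x J_h$ gives $\partial_{xx}(\partial_t\Phi) = \partial_x J_h$ on $\TT$. Integrating once in $x$, the unique periodic, mean-zero primitive (which is precisely $\partial_x \partial_t \Phi$, since $\partial_t\Phi$ remains mean-zero in $x$) is $J_h$ minus its spatial average:
\begin{equation*}
\partial_{tx}\Phi = J_h - \frac{1}{|\TT|}\int_\TT J_h\,\dd x.
\end{equation*}
Since subtracting the mean only decreases the $L^2$ norm, $\|\partial_{tx}\Phi\|_{L^2(\TT)}\leq \|J_h\|_{L^2(\TT)}$, and the bound \eqref{estim_Jh2_cont} from Lemma \ref{lem_moments_cont} immediately gives the claim.

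The only mildly delicate point is the derivation of the intermediate identity for $\partial_{tx}\Phi$: one must be careful that the natural primitive on the torus is the mean-zero one, which is why the spatial average of $J_h$ appears and then drops out in $L^2$. Everything else reduces to routine elliptic/Poincaré estimates and direct applications of the moment lemma.
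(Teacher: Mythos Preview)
Your argument is correct. For \eqref{estim_dxphi_cont} you follow exactly the paper's route: energy estimate on the Poisson equation, Cauchy--Schwarz, Poincar\'e, then \eqref{estim_uh_cont}.

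For \eqref{estim_dtxphi_cont} your method differs slightly from the paper's. The paper runs another energy estimate: after obtaining $-\partial_t\partial_{xx}\Phi=-\partial_x J_h$, it multiplies by $\partial_t\Phi$, integrates by parts to get $\|\partial_{tx}\Phi\|_{L^2(\TT)}^2=\langle J_h,\partial_{tx}\Phi\rangle_{L^2(\TT)}$, and concludes by Cauchy--Schwarz and \eqref{estim_Jh2_cont}. You instead integrate $\partial_{xx}(\partial_t\Phi)=\partial_x J_h$ once in $x$ to obtain the explicit identity $\partial_{tx}\Phi=J_h-\langle J_h\rangle_\TT$, and then use that projecting out the mean is $L^2$-contractive. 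Both arguments are short and give the same bound $\|\partial_{tx}\Phi\|_{L^2(\TT)}\leq\|J_h\|_{L^2(\TT)}$; yours has the small advantage of yielding an exact formula for $\partial_{tx}\Phi$, while the paper's energy method is more parallel to the treatment of the first estimate and to the discrete analogue in Lemma~\ref{lem_estim_phi_dis}, where no such explicit primitive is available.
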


\begin{proof}
The first estimate is obtained by multiplying the auxiliary equation \eqref{eq_aux_cont} by $\Phi$, integrating on $\TT$, applying the Poincaré inequality and equality \eqref{estim_uh_cont}
\begin{equation*}
    \|\dx\Phi\|^2_{L^2(\TT)}=\langle-\partial_{xx}\Phi,\Phi\rangle_{L^2(\TT)}\leq\|u_h\|_{L^2(\TT)}\|\Phi\|_{L^2(\TT)}\leq C_uC_P\|\Pi F\|\|\dx\Phi\|_{L^2(\TT)}.
\end{equation*}
For the second estimate, we differentiate the auxiliary equation with respect to time and use the continuity equation \eqref{eq_uh} to obtain $-\partial_t\partial_{xx}\Phi=-\dx J_h$. Then we multiply by $\partial_t\Phi$ and integrate to get, thanks to \eqref{estim_Jh2_cont},
\begin{align*}
    \|\dtx \Phi(t)\|_{L^2(\TT)}^2=\langle-\dx J_h,\dt\Phi\rangle_{L^2(\TT)}&=\langle J_h,\dtx\Phi\rangle_{L^2(\TT)}\\
    &\leq C_{J1}\|(I-\Pi) F(t)\|\|\dtx \Phi(t)\|_{L^2(\TT)}.
\end{align*}
\end{proof}

Thanks to the moments estimates, for small enough $\delta >0$, the square root of the modified entropy defines an equivalent norm on $\mathcal{H}$, as stated in the following lemma.

\begin{Lemma}[Equivalent norm]\label{lem_norm_eq_cont}
There is $\delta_1>0$ such that for all $\delta \in (0,\delta_1)$, there are positive constants $0<c_\delta<C_\delta$ such that for all $F\in\mathcal{H}$, one has
\begin{equation*}
    c_\delta\|F\|^2\leq H_\delta[F]\leq C_\delta\|F\|^2.
\end{equation*}
\end{Lemma}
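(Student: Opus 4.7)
The plan is to control the cross term $\delta \langle J_h, \partial_x \Phi\rangle_{L^2(\TT)}$ by $\|F\|^2$ via a straightforward Cauchy–Schwarz argument, and then absorb it into the leading $\tfrac{1}{2}\|F\|^2$ term by choosing $\delta$ small enough.

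More precisely, I would first apply the Cauchy–Schwarz inequality in $L^2(\TT)$ to obtain
\begin{equation*}
\bigl|\langle J_h, \partial_x \Phi\rangle_{L^2(\TT)}\bigr| \leq \|J_h\|_{L^2(\TT)}\,\|\partial_x \Phi\|_{L^2(\TT)}.
\end{equation*}
Then I would combine estimate \eqref{estim_Jh1_cont} from Lemma \ref{lem_moments_cont}, namely $\|J_h\|_{L^2(\TT)} \leq C_{J1}\|F\|$, with estimate \eqref{estim_dxphi_cont} from Lemma \ref{lem_estim_phi_cont}, namely $\|\partial_x \Phi\|_{L^2(\TT)} \leq C_P C_u \|\Pi F\|$, and use the trivial bound $\|\Pi F\| \leq \|F\|$ (since $\Pi$ is an orthogonal projection). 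This yields
\begin{equation*}
\bigl|\langle J_h, \partial_x \Phi\rangle_{L^2(\TT)}\bigr| \leq C_{J1}\,C_P\,C_u\,\|F\|^2.
\end{equation*}

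Plugging this bound into the definition \eqref{def_entropy_cont} of $H_\delta[F]$ gives the two-sided estimate
\begin{equation*}
\left(\tfrac{1}{2} - \delta\, C_{J1} C_P C_u\right)\|F\|^2 \leq H_\delta[F] \leq \left(\tfrac{1}{2} + \delta\, C_{J1} C_P C_u\right)\|F\|^2.
\end{equation*}
Setting $\delta_1 \coloneqq \tfrac{1}{2 C_{J1} C_P C_u}$, for any $\delta \in (0,\delta_1)$ the lower constant $c_\delta \coloneqq \tfrac{1}{2} - \delta\, C_{J1} C_P C_u$ is strictly positive, and $C_\delta \coloneqq \tfrac{1}{2} + \delta\, C_{J1} C_P C_u$ provides the upper bound, which concludes the proof.

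There is no real obstacle here: the statement is a simple perturbation argument, and the only thing to check is that the constants coming from Lemmas \ref{lem_moments_cont} and \ref{lem_estim_phi_cont} combine into a bound of the form $\|F\|^2$. Using $\|\Pi F\|\le \|F\|$ rather than the sharper $\|(I-\Pi)F\|$ version \eqref{estim_Jh2_cont} of the moment bound is deliberate, since here we only need the equivalence of norms and not the dissipation structure that will be exploited later when differentiating $H_\delta$ in time.
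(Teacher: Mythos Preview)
Your proof is correct and follows essentially the same approach as the paper: Cauchy--Schwarz on the cross term, the moment bound \eqref{estim_Jh1_cont} together with \eqref{estim_dxphi_cont}, and then $\|\Pi F\|\leq\|F\|$ to reach $C_{J1}C_PC_u\|F\|^2$. You are slightly more explicit than the paper in writing out $\delta_1$, $c_\delta$, and $C_\delta$, but the argument is the same.
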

\begin{proof}
The result follows from the definition of the modified entropy \eqref{def_entropy_cont} and the estimate
\begin{equation*}
    |\langle J_h,\dx\Phi\rangle_{L^2(\TT)}|\leq\|J_h\|_{L^2(\TT)}\|\dx \Phi\|_{L^2(\TT)}\leq C_{J1}\,C_P\,C_u\|F\|\|\Pi F\|.
\end{equation*}
Using Young inequality and the fact that $\|\Pi F\|\leq\|F\|$,
\begin{equation*}
    |\langle J_h,\dx\Phi\rangle_{L^2(\TT)}|\leq C_{J1}\,C_u\,C_P\|F\|^2.
\end{equation*}
\end{proof}
With the previous lemmas, it is then possible to establish the proof of Proposition \ref{prop_hypoco_lin_cont}.
\begin{proof}
By using the moment equation \eqref{eq_Jh}, the time derivative of the modified entropy becomes a sum of five terms,
\begin{equation*}
    \ddt H_\delta[F]= T_1 + \delta T_2 + \delta T_3 + \delta T_4 + \delta T_5.
\end{equation*}
Using Lemmas \ref{lem_coer_micro_continu}, \ref{lem_moments_cont} and \ref{lem_estim_phi_cont}, one has
\begin{align*}
    T_1 &= \frac{1}{2}\ddt\|F\|^2\leq-C_{mc}\|(I-\Pi) F\|^2,\\
    T_2 &= -\langle\dx S_h,\dx\Phi\rangle_{L^2(\TT)}\leq\left|\langle S_h,\dxx\Phi\rangle_{L^2(\TT)}\right|\leq C_S\,C_u\|(I-\Pi) F\|\|\Pi F\|,\\
    T_3 &= -D_0\langle\dx u_h,\dx\Phi\rangle_{L^2(\TT)}=D_0\langle u_h,\dxx\Phi\rangle_{L^2(\TT)}=-D_0\,C_u^2\|\Pi F\|^2,\\
    T_4 &= -\langle\rhoinf\inv J_f -\rhoinf J_g,\dx\Phi\rangle_{L^2(\TT)}\leq C_{J2}\,C_P\,C_u\|(I-\Pi) F\|\|\Pi F\|,\\
   T_5 &= \langle J_h,\dtx\Phi\rangle_{L^2(\TT)}\leq C_{J1}^2\|(I-\Pi) F\|^2.\\
\end{align*}
Combining these estimates together, one has
\begin{align*}
    \ddt H_\delta[F] &+ (C_{mc}-\delta C_{J1}^2)\|(I-\Pi) F\|^2 + \delta D_0\,C_u^2\|\Pi F\|^2\\
    &\leq \delta\left(C_S\,C_u+C_{J2}\,C_P\,C_u\right)\|(I-\Pi) F\|\|\Pi F\|.
\end{align*}
Then, let us set $\delta\in(0,\min(\delta_1,\delta_2))$ where $\delta_1$ appears in Lemma \ref{lem_norm_eq_cont} and $\delta_2<C_{mc}/C_{J1}^2$ ensures the positivity of $(C_{mc}-\delta C_{J1}^2)$. We also set $\delta<\delta_3$ where
\begin{equation*}
    \delta_3=C_{mc}D_0\,C_u^2\left((C_S\,C_u+C_{J2}\,C_P\,C_u)^2+C_{J1}^2D_0\,C_u^2\right)\inv
\end{equation*} allows us to obtain:
\begin{equation*}
    \ddt H_\delta[F] + K_\delta\left(\|(I-\Pi) F\|^2 + \|\Pi F\|^2\right)\leq0,
\end{equation*}
where $K_\delta=\min(C_{mc}-\delta C_{J1}^2,\delta D_0\,C_u^2)/2$. One can then use that $\Pi$ is an orthogonal projector and Lemma \ref{lem_norm_eq_cont} to obtain
\begin{equation*}
    \ddt H_\delta[F] + \frac{K_\delta}{C_\delta}H_\delta[F] \leq 0.
\end{equation*}
This gives the exponential decay of $H_\delta[F]$, which allows to conclude the proof of Proposition~\ref{prop_hypoco_lin_cont} by using the lower bound in Lemma \ref{lem_norm_eq_cont} and setting $\kappa=\frac{K_\delta}{2C_\delta}$, $C=\sqrt{C_\delta/c_\delta}$.
\end{proof}

\subsection{Extension to the nonlinear setting}

From this exponential decay result for the linearized system \eqref{eq_f_lin}--\eqref{eq_g_lin}, a local result can be established for the nonlinear case by the same method. It is based on global existence result and maximum principle estimates as stated in \cite[Theorem 1.1]{NeumannSchmeiser2016}, and recalled in the following proposition.

\begin{Proposition}[Global existence result and maximum principle]\label{prop_maxprinciple_cont}
Under assumptions \eqref{hyp_chi}, let $\rhoinf$ be defined by \eqref{def_rhoinf}, and assume that there exist positive constants $\gamma_1<\rhoinf$ and $\gamma_2$ such that the initial datum $\Fg_I=(\fg_I,\ggot_I) \in L^\infty(\TT\times\R)$ satisfies for all $(x,v)\in\TT\times\R$
\begin{gather*}
    (\rhoinf-\gamma_1)\chi_1(v)\leq \fg_I(x,v)\leq (\rhoinf+\gamma_2)\chi_1(v),\\ (\rhoinf+\gamma_2)\inv\chi_2(v)\leq \ggot_I (x,v)\leq (\rhoinf-\gamma_1)\inv \chi_2(v).
\end{gather*}
Then the system \eqref{eq_f_nonlin}--\eqref{eq_g_nonlin} with initial condition $\Fg_I$ admits a unique global mild solution $\Fg\in \mathcal{C}([0,\infty),L^\infty(\TT\times\R))^2$ satisfying for all $(t,x,v)\in [0,\infty)\times\TT\times\R$:
\begin{gather*}
    (\rhoinf-\gamma_1)\chi_1(v)\leq \fg(t,x,v)\leq (\rhoinf+\gamma_2)\chi_1(v),\\ (\rhoinf+\gamma_2)\inv\chi_2(v)\leq \ggot (t,x,v)\leq (\rhoinf-\gamma_1)\inv \chi_2(v).
\end{gather*}
\end{Proposition}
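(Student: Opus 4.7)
The plan is to construct a global mild solution by a Picard-type iteration whose natural invariant set is precisely the region prescribed by the maximum principle. Along characteristics $\sigma \mapsto x - v(t-\sigma)$, each scalar equation reduces to a linear ODE in $\sigma$ whose integrating factor involves $\rho_{\ggot}$ (resp. $\rho_{\fg}$). Setting $A_{\ggot}(s,t,x,v):= \int_s^t \rho_{\ggot}(\sigma, x-v(t-\sigma))\,\dd\sigma$ and $A_{\fg}$ analogously, Duhamel's formula gives the mild representation
\[
    \fg(t,x,v) = e^{-A_{\ggot}(0,t,x,v)}\, \fg_I(x-vt,v) + \chi_1(v) \int_0^t e^{-A_{\ggot}(s,t,x,v)}\,\dd s,
\]
together with the analogous formula for $\ggot$.

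First I would fix $T>0$ and define the iteration $(\fg^{n+1}, \ggot^{n+1}) = \mathcal{T}(\fg^n, \ggot^n)$ on $\mathcal{C}([0,T], L^\infty(\TT\times\R))^2$ by the above Duhamel formulas with $A_{\ggot}, A_{\fg}$ replaced by $A_{\ggot^n}, A_{\fg^n}$, initialized with $(\fg^0,\ggot^0) = (\fg_\infty,\ggot_\infty)$. The central step is to show that the closed convex set $\mathcal{K}$ of pairs obeying the stated pointwise bounds is invariant under $\mathcal{T}$. If $(\fg^n, \ggot^n) \in \mathcal{K}$, integrating the bounds in $v$ and using $\int_\R \chi_k\,\dd v = 1$ gives $(\rhoinf+\gamma_2)\inv \leq \rho_{\ggot^n}(\sigma,\cdot) \leq (\rhoinf-\gamma_1)\inv$ and the symmetric bounds on $\rho_{\fg^n}$. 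Using the identity $\int_0^t e^{-A_{\ggot^n}(s,t)} \rho_{\ggot^n}(s,\cdot)\,\dd s = 1 - e^{-A_{\ggot^n}(0,t)}$ together with these pointwise bounds yields
\[
    (\rhoinf-\gamma_1)\bigl(1-e^{-A_{\ggot^n}(0,t)}\bigr) \leq \int_0^t e^{-A_{\ggot^n}(s,t)}\,\dd s \leq (\rhoinf+\gamma_2)\bigl(1-e^{-A_{\ggot^n}(0,t)}\bigr).
\]
Combined with the hypothesis $(\rhoinf-\gamma_1)\chi_1 \leq \fg_I \leq (\rhoinf+\gamma_2)\chi_1$, the Duhamel formula above then directly gives $(\rhoinf-\gamma_1)\chi_1 \leq \fg^{n+1} \leq (\rhoinf+\gamma_2)\chi_1$, and the symmetric computation closes the invariance for $\ggot^{n+1}$.

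Once invariance holds, the uniform bounds on $\rho_{\fg^n}, \rho_{\ggot^n}$ render the nonlinearity in $\mathcal{T}$ Lipschitz with a constant uniform in $n$. A direct Gronwall estimate on $\|\fg^{n+1}-\fg^n\|_\infty + \|\ggot^{n+1}-\ggot^n\|_\infty$ then gives a contraction on $[0,T]$ for $T$ small enough depending only on $\rhoinf\pm\gamma_{1,2}$, hence a unique mild solution on $[0,T]$. Since $\mathcal{K}$-membership passes to the limit and the bounds do not deteriorate in time, the local solution extends globally to $[0,\infty)$, and uniqueness in $\mathcal{C}([0,\infty), L^\infty(\TT\times\R))^2$ follows from the same Gronwall argument applied to the difference of two solutions lying in $\mathcal{K}$.

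The main obstacle is the invariance step: the bounds on $\fg$ and those on $\ggot$ are coupled through the densities, each being preserved only because the other holds for the previous iterate. The precise reciprocal form $\rhoinf\pm\gamma_{1,2}$ versus $(\rhoinf\mp\gamma_{1,2})\inv$ reflects the detailed-balance structure $\fg_\infty\,\ggot_\infty \propto \chi_1\chi_2$, and is exactly what makes the loop close; any enlargement of these bounds would fail to be stable under the flow.
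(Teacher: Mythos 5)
Note first that the paper does not prove this proposition at all: it is recalled from Theorem 1.1 of \cite{NeumannSchmeiser2016}, so there is no in-paper argument to compare with and your attempt must stand on its own. Its core is sound, and it is the standard route: the Duhamel representation along characteristics, the Picard iteration on the invariant set $\mathcal{K}$, and in particular the invariance computation are correct. Integrating the bounds defining $\mathcal{K}$ in $v$ gives $(\rhoinf+\gamma_2)\inv\le\rho_{\ggot^n}\le(\rhoinf-\gamma_1)\inv$ and $\rhoinf-\gamma_1\le\rho_{\fg^n}\le\rhoinf+\gamma_2$, and your identity $\int_0^t e^{-A(s,t)}\rho(s)\,\dd s=1-e^{-A(0,t)}$ then closes the two-sided bounds for the next iterate exactly as claimed; this is indeed the mechanism behind the reciprocal form of the bounds.

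The genuine gap is in the contraction step. You run Gronwall on $\|\fg^{n+1}-\fg^n\|_{L^\infty}+\|\ggot^{n+1}-\ggot^n\|_{L^\infty}$, but the map $\mathcal{T}$ sees the previous iterate only through its density, and $|\rho_{\ggot^n}-\rho_{\ggot^{n-1}}|(t,x)\le\int_\R|\ggot^n-\ggot^{n-1}|\,\dd v$ is \emph{not} controlled by the $L^\infty_{x,v}$ norm of the difference: the velocity domain $\R$ has infinite measure, so the plain sup norm does not make $\mathcal{T}$ Lipschitz and the fixed-point argument does not close as written. The standard repair is to measure differences in a weighted norm such as $\sup|f|/\chi_1+\sup|g|/\chi_2$ (finite on $\mathcal{K}$, since $\chi_1,\chi_2>0$ and all iterates are sandwiched between multiples of $\chi_1,\chi_2$), or in $L^\infty_x L^1_v$; then $\|\rho_{\ggot^n}-\rho_{\ggot^{n-1}}\|_{L^\infty_x}$ is bounded by that norm, the elementary bound $|e^{-A}-e^{-A'}|\le|A-A'|$ applies because the exponents are nonnegative, and one obtains a contraction on $[0,T]$ with $T$ depending only on $\rhoinf\pm\gamma_{1,2}$, after which your global continuation argument is fine. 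A smaller caveat: your uniqueness argument only gives uniqueness among solutions remaining in $\mathcal{K}$ (for a general element of $\mathcal{C}([0,\infty),L^\infty(\TT\times\R))^2$ the density $\rho_\fg$ need not even be defined), so the uniqueness claim should be understood, as in \cite{NeumannSchmeiser2016}, within a class in which the densities make sense, or supplemented by an argument that any such mild solution stays in $\mathcal{K}$.
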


From this stability result, exponential decay of small perturbations to equilibrium can be shown for the full nonlinear problem.

\begin{Theorem}
Let \eqref{hyp_chi} hold and let $\Fg_I$ satisfies the assumptions of Proposition \ref{prop_maxprinciple_cont} with $\gamma_1$ and $\gamma_2$ small enough. 

Then the solution $\Fg$ to \eqref{eq_f_nonlin}--\eqref{eq_g_nonlin} satisfies
\begin{equation*}
    \|\Fg(t)-\Fg_\infty\|\leq C\,\|\Fg_I-\Fg_\infty\|\,e^{-\kappa\,t},
\end{equation*}
with positive constants $C$ and $\kappa$.
\end{Theorem}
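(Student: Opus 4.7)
The plan is to set $F = \Fg - \Fg_\infty = (f,g)$ and to run the modified entropy argument of Proposition \ref{prop_hypoco_lin_cont} on this full (nonlinear) perturbation, treating the quadratic terms as a small perturbation controlled by the maximum principle. Substituting $\fg=\rhoinf\chi_1+f$ and $\ggot=\rhoinf\inv\chi_2+g$ into \eqref{eq_f_nonlin}--\eqref{eq_g_nonlin} one obtains
\begin{align*}
\dt f+v\dx f &= -\rhoinf\inv f-\rhoinf\chi_1\rho_g-\rho_g\,f,\\
\dt g+v\dx g &= -\rhoinf g-\rhoinf\inv\chi_2\rho_f-\rho_f\,g,
\end{align*}
i.e.\ $\LL F$ plus the quadratic terms $(-\rho_g f,-\rho_f g)$. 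The pointwise bounds of Proposition \ref{prop_maxprinciple_cont} yield immediately $|\rho_f(t,x)|\leq\eta(\gamma_1,\gamma_2)$ and $|\rho_g(t,x)|\leq\eta(\gamma_1,\gamma_2)$ with $\eta\to 0$ as $\gamma_1,\gamma_2\to 0$; this is the key smallness we will exploit. Note also that by the conservation law and the definition \eqref{def_rhoinf} of $\rhoinf$, the zero-mass condition $\int_{\TT\times\R}(f-g)\dd x\dd v=0$ is preserved, so the moments \eqref{def_moments_continus} and the auxiliary equation \eqref{eq_aux_cont} are still well-defined.

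Next I would differentiate $H_\delta[F]$ exactly as in the proof of Proposition \ref{prop_hypoco_lin_cont}, collecting the five contributions $T_1,\dots,T_5$ from the linear collision operator and transport term, and then tracking the extra terms produced by the nonlinear corrections. In $T_1=\tfrac12\ddt\|F\|^2$ the nonlinear terms contribute
\begin{equation*}
-\int_{\TT\times\R}\frac{\rho_g\,f^{2}}{\rhoinf\chi_1}\dd v\dd x-\int_{\TT\times\R}\frac{\rho_f\,g^{2}\,\rhoinf}{\chi_2}\dd v\dd x,
\end{equation*}
which are bounded in absolute value by $\eta(\gamma)\|F\|^2$. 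In the moment equation \eqref{eq_Jh}, subtracting the two kinetic equations and integrating against $v$ produces the additional right-hand side $-\rho_g J_f+\rho_f J_g$, again bounded in $L^2(\TT)$ by $\eta(\gamma)\|F\|$ thanks to the maximum principle and \eqref{estim_Jh1_cont}. Consequently $T_2,\dots,T_5$ pick up extra contributions of the form $\eta(\gamma)\|F\|\,\|\dx\Phi\|_{L^2(\TT)}\leq\eta(\gamma)\,C_P C_u\,\|F\|\,\|\Pi F\|$, which are all dominated by $\eta(\gamma)\,\|F\|^2=\eta(\gamma)(\|\Pi F\|^2+\|(I-\Pi)F\|^2)$.

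Putting everything together yields, for any $\delta\in(0,\min(\delta_1,\delta_2,\delta_3))$ as in the linear proof,
\begin{equation*}
\ddt H_\delta[F]+\bigl(K_\delta-\eta(\gamma_1,\gamma_2)\,M_\delta\bigr)\bigl(\|(I-\Pi)F\|^2+\|\Pi F\|^2\bigr)\leq 0,
\end{equation*}
for some constant $M_\delta>0$ that I would extract explicitly from the above estimates. Fixing $\delta$ first (so $K_\delta,M_\delta$ are fixed) and then choosing $\gamma_1,\gamma_2$ small enough that $\eta(\gamma_1,\gamma_2)\,M_\delta\leq K_\delta/2$, the equivalence Lemma \ref{lem_norm_eq_cont} gives $\ddt H_\delta[F]+\tfrac{K_\delta}{4C_\delta}H_\delta[F]\leq 0$, and Grönwall closes the argument with $\kappa=K_\delta/(8C_\delta)$ and $C=\sqrt{C_\delta/c_\delta}$. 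The main obstacle is a bookkeeping one: one must verify that every additional nonlinear contribution in the five terms $T_i$ can be put under a single uniform smallness factor $\eta(\gamma_1,\gamma_2)$ independent of $\delta$, so that $\delta$ can be fixed by the linear theory before $\gamma_1,\gamma_2$ are shrunk; beyond this bookkeeping the argument is a direct perturbation of Proposition \ref{prop_hypoco_lin_cont}.
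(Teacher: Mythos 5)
Your proposal is correct and follows essentially the same route as the paper: write the equation for $\Fg-\Fg_\infty$ as the linearized operator plus the quadratic remainder $(-\rho_g f,-\rho_f g)$ (the paper's $Q(\fg,\ggot)-\LL\widetilde{\Fg}$), bound that remainder by a small factor $\gamma$ times $\|\widetilde{\Fg}\|$ via the maximum principle of Proposition \ref{prop_maxprinciple_cont}, and absorb it into the linear entropy dissipation by fixing $\delta$ first and then taking $\gamma_1,\gamma_2$ small. Your version is in fact slightly more explicit than the paper's (which only records the perturbed microscopic coercivity estimate), since you also track the extra source $-\rho_g J_f+\rho_f J_g$ in the $J_h$ moment equation — a term the paper spells out only in the discrete proof of Theorem \ref{theo_hypoco_dis_nonlin}.
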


\begin{proof}
Let us first denote by $Q(\fg,\ggot)$ the nonlinear collision operator given by
\begin{equation}\label{eq_Q_cont}
    Q(\fg,\ggot) = \begin{pmatrix}\chi_{1} - \rho_{\ggot}\fg \\ \chi_{2} - \rho_{\fg}\ggot\end{pmatrix}.
\end{equation}
Let us also introduce $\widetilde{\Fg} = \Fg - \Fg_\infty$. The nonlinear system \eqref{eq_f_nonlin}-\eqref{eq_g_nonlin} can now be rewritten in term of $\widetilde{\Fg}$. Since the equilibrium $\Fg_\infty$ does not depend on the spatial nor time variables, $\widetilde{\Fg}$ satisfies
\begin{align*}
    \partial_t \widetilde{\Fg} + v\,\partial_x\widetilde{\Fg}
    = \LL\widetilde{\Fg} + \left( Q(\fg,\ggot)-\LL\widetilde{\Fg}\right).
\end{align*}
In addition, one can compute the following relation
\begin{equation}\label{eq_QFminusLFt_cont}
    Q(\fg,\ggot) - \LL\widetilde{\Fg} =\begin{pmatrix}
    -(\rho_{\ggot}-\rhoinf\inv)(\fg - \rhoinf\chi_{1})\\
    -(\rho_{\fg}-\rhoinf)(\ggot -\rhoinf\inv\chi_{2})
    \end{pmatrix}.
\end{equation}
Applying Proposition \ref{prop_maxprinciple_cont} one has the following bound 
\begin{equation*}
    \left\|Q(\fg,\ggot) - \LL\widetilde{\Fg}\right\|\leq\gamma\|\widetilde{\Fg}\|.
\end{equation*}
Consequently, the estimate on the entropy becomes
\begin{equation*}
    \frac{1}{2}\frac{\dd}{\dd t}\|\Fgt(t)\|^2+C_{mc}\|(I-\Pi)\Fgt(t)\|^2-\gamma\|\widetilde{\Fg}\|^2\leq 0.
\end{equation*}

This ensures that for $\gamma$ (related to $\gamma_1$ and $\gamma_2$) small enough, the entropy dissipation of the linearized system given in Proposition \ref{prop_hypoco_lin_cont} is enough to overcome the nonlinear dynamics.
\end{proof}

\section{The discrete setting}\label{sec:discrete_setting}

In this section, we present numerical schemes for systems \eqref{eq_f_nonlin}--\eqref{eq_g_nonlin} and \eqref{eq_f_lin}--\eqref{eq_g_lin}. The schemes are implicit in time and of finite volume type in the $(x,v)$-phase space.

\subsection{Notations}

\subsubsection{Mesh}

Since it is in practice not possible to implement a numerical scheme on an unbounded domain, we first have to restrict the velocity domain to a bounded symmetric segment $[-v^*,v^*]$. We consider a mesh of this interval composed of $2L$ control volumes arranged symmetrically around $v=0$. We denote $v_\jph$ the $2L+1$ interface points, with $j\in\J\coloneqq\{-L,\cdots,L\}$. In this way,
\[ v_{-L+\frac{1}{2}}=-v^*,\quad v_{\frac{1}{2}}=0,\quad v_\jph=-v_\jmh \qquad \forall j=0,\cdots,L.\]
For the sake of simplicity, we assume that the velocity mesh is uniform, namely that every cell $\mathcal{V}_j=(v_\jmh,v_\jph)$ has constant length $\Delta v$. Denoting $v_j$ the midpoint of the cell $\mathcal{V}_j$, we also have $v_j=-v_{-j+1}$ for all $j=1,\cdots,L$.

In space, we consider a uniform discretization of the torus $\TT$ into $N$ cells
\[\mathcal{X}_i\coloneqq(x_\imh,x_\iph), \quad i\in\II\coloneqq\mathbb{Z}/N\mathbb{Z}\]
of length $\Delta x$. Once again, we assume uniformity of the mesh for simplicity's sake, but our results generalize to a non-uniform setting. As in \cite{BessemoulinHerdaRey2020}, we have to impose that $N$ is odd in order to have, among others, a discrete Poincaré inequality on the torus.

The control volumes in phase space are defined by
\[ K_{ij}\coloneqq\mathcal{X}_i\times\mathcal{V}_j,\qquad \forall (i,j)\in\II\times \J.\]
The size of this phase space discretization is defined by $\Delta=(\Delta x,\Delta v)$. Finally, we set $\Delta t>0$ the time step, and $t^n=n\Delta t$ for all $n\geq 0$.

\subsubsection{Discrete velocity profiles} For $k=1,\,2$, we assume that we are given cell values $(\chi_{k,j})_{j\in\J}\in\mathbb{R}^\J$ such that the following assumptions are fulfilled
\begin{equation}\label{hyp_chi_dis}
    \begin{aligned}
        & \chi_{k,j}>0,\quad \chi_{k,j}=\chi_{k,-j+1} \qquad \forall j=1,\cdots, L,\\
        & \sum_{j\in\J}\Delta v\,\chi_{k,j}=1,\\
        & 0 <\,\underline{D}_k\,\leq D_k^\Delta\leq\,\overline{D}_k\,<\infty,\qquad Q_k^\Delta\leq\,\overline{Q}_k\,<\infty,
    \end{aligned}
\end{equation}
where 
\[ D_k^\Delta\coloneqq\sum_{j\in\J}\Delta v\,|v_j|^2\,\chi_{k,j},\qquad Q_k^\Delta\coloneqq\sum_{j\in\J}\Delta v\,|v_j|^4\,\chi_{k,j},\]
and $\underline{D}_k$, $\overline{D}_k$, $\overline{Q}_k$ are universal constants. Typically, we define $\chi_{k,j}=c_{\Delta v}\chi_k(v_j)$ and compute $c_{\Delta v}$ in such a way that the mass of $(\chi_{k,j})_j$ is 1. Note also that the symmetry properties imply 
\begin{equation}\label{sym_chi_dis}
    \sum_{j\in\J}\Delta v\,v_j\,\chi_{k,j}=0.
\end{equation}

\subsubsection{Discrete gradients and functional setting in space} Due to our choices of discretization, we need to define several discrete gradients in space. Given a macroscopic quantity $u=(u_i)_{i\in\II}$, we define 
\begin{itemize}
\item the discrete centered gradient $D_x^c u\in\R^\II$ given by
\[(D_x^cu)_i=\frac{u_{i+1}-u_{i-1}}{2\Delta x} \qquad \forall i\in\II,\]
\item the discrete downstream gradient $D_x^-u\in\R^\II$ given by
\[(D_x^-u)_i=\frac{u_{i}-u_{i-1}}{\Delta x} \qquad \forall i\in\II,\]
\item the discrete upstream gradient $D_x^+u\in\R^\II$ given by
\[(D_x^+u)_i=\frac{u_{i+1}-u_{i}}{\Delta x} \qquad \forall i\in\II.\]
\end{itemize}
It is straightforward to see that these discrete gradients satisfy the following properties:
\begin{equation}\label{prop_gradients_dis}
    \frac{1}{2}(D_x^-+D_x^+)=D_x^c, \qquad D_x^+D_x^-=D_x^-D_x^+.
\end{equation}

For $u_k=(u_{k,i})_{i\in\II}$, $k=1,\,2$, we define the discrete $L^2$ scalar product by
\begin{equation}\label{def_psL2_dis}
    \langle u_1,u_2\rangle_2\coloneqq\sum_{i\in\II}\Delta x\,u_{1,i}u_{2,i},
\end{equation}
and denote $\|\cdot\|_2$ the corresponding norm.

Using the definition of the discrete gradients and the periodic boundary conditions, we immediately have the following algebraic properties.

\begin{Lemma}\label{ref_IPP_discret}
    For all $u=(u_i)_{i\in\II}$, $\bar{u}=(\bar{u}_i)_{i\in\II}$, it holds
    \begin{align}
       & \langle D_x^c u,\bar{u}\rangle_2=-\langle u,D_x^c\bar{u}\rangle_2, \label{IPP_centre}\\
       & \langle D_x^+ u,\bar{u}\rangle_2=-\langle u,D_x^-\bar{u}\rangle_2 \label{IPP_decentre},\\
       & \langle (D_x^+D_x^- + D_x^-D_x^+) u,\bar{u}\rangle_2=-4\langle D_x^cu,D_x^c\bar{u}\rangle_2 \label{IPP2_decentre},\\
       & \Delta x\|D_x^c u\|_2 \leq \|u\|_2 \label{estim_dxu_disc}.
    \end{align}
\end{Lemma}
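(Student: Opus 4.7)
The plan is to establish each of the four identities by elementary manipulation of the defining finite sums, the central ingredient being the periodic-reindexing property of $\II=\Z/N\Z$: $\sum_{i\in\II} a_i=\sum_{i\in\II} a_{i+k}$ for every $k\in\Z$, which is the discrete counterpart of the boundary-free integration by parts on $\TT$.

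For \eqref{IPP_centre} and \eqref{IPP_decentre}, I would substitute the definitions of $D_x^c u$ and $D_x^+ u$ into the scalar products, split each resulting sum into two pieces (one involving $u_{i+1}\bar{u}_i$, one involving $u_{i-1}\bar{u}_i$ or $u_i\bar{u}_i$), shift the index by $\pm 1$ in exactly one of the two pieces so that $u$ is evaluated at $i$, and recollect. The minus sign and the discrete gradient of $\bar{u}$ on the right-hand side both emerge automatically from this single reindexing step.

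For \eqref{IPP2_decentre}, my plan is to first apply \eqref{IPP_decentre} twice: taking the test function $v=D_x^- u$ yields $\langle D_x^+ D_x^- u,\bar{u}\rangle_2 = -\langle D_x^- u,D_x^-\bar{u}\rangle_2$, and the companion identity obtained by re-labeling in \eqref{IPP_decentre} yields $\langle D_x^- D_x^+ u,\bar{u}\rangle_2 = -\langle D_x^+ u,D_x^+\bar{u}\rangle_2$. What remains is to identify the sum $\langle D_x^- u,D_x^-\bar{u}\rangle_2+\langle D_x^+ u,D_x^+\bar{u}\rangle_2$ with $4\langle D_x^c u,D_x^c\bar{u}\rangle_2$. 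For this I would invoke the first relation of \eqref{prop_gradients_dis}, $2D_x^c=D_x^++D_x^-$, expand the right-hand side, and handle the resulting cross pairings $\langle D_x^\pm u, D_x^\mp\bar{u}\rangle_2$ via the same periodic index shifts as in the previous step. This cross-term bookkeeping is the step I expect to require the most care; the other three identities are purely mechanical reindexings.

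Finally, \eqref{estim_dxu_disc} follows from the pointwise inequality $(u_{i+1}-u_{i-1})^2\leq 2(u_{i+1}^2+u_{i-1}^2)$. Squaring the definition of $D_x^c$, summing over $i\in\II$ (periodicity makes each $u_j^2$ contribute exactly twice on the right), and multiplying by $\Delta x$ yields $4(\Delta x)^2\|D_x^c u\|_2^2\leq 4\|u\|_2^2$, which is precisely the claimed bound $\Delta x\,\|D_x^c u\|_2\leq\|u\|_2$.
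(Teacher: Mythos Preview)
The paper does not actually prove this lemma; it simply declares that the four identities follow ``immediately'' from the definitions and periodicity. Your plan for \eqref{IPP_centre}, \eqref{IPP_decentre} and \eqref{estim_dxu_disc} is correct and is precisely what that remark intends.

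For \eqref{IPP2_decentre}, however, the cross-term bookkeeping you singled out as the delicate step cannot be completed: the identity as stated is \emph{false}. Your reduction is right --- after two applications of \eqref{IPP_decentre} one is left with showing $\langle D_x^- u,D_x^-\bar{u}\rangle_2+\langle D_x^+ u,D_x^+\bar{u}\rangle_2=4\langle D_x^c u,D_x^c\bar{u}\rangle_2$. But expanding the right side via $2D_x^c=D_x^++D_x^-$ gives
\[
4\langle D_x^c u,D_x^c\bar{u}\rangle_2=\langle D_x^+ u,D_x^+\bar{u}\rangle_2+\langle D_x^- u,D_x^-\bar{u}\rangle_2+\langle D_x^+ u,D_x^-\bar{u}\rangle_2+\langle D_x^- u,D_x^+\bar{u}\rangle_2,
\]
so one would need the two cross pairings to vanish, which they do not in general. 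Concretely, take $N=3$, $\Delta x=1$, $u=\bar{u}=(1,0,0)$: then $(D_x^+D_x^-+D_x^-D_x^+)u=(-4,2,2)$, so the left side of \eqref{IPP2_decentre} equals $-4$, while $D_x^c u=(0,-\tfrac12,\tfrac12)$ gives $-4\langle D_x^c u,D_x^c u\rangle_2=-2$. What \emph{is} true, and follows from your first two steps together with the shift relation $(D_x^-v)_i=(D_x^+v)_{i-1}$, is
\[
\langle (D_x^+D_x^- + D_x^-D_x^+) u,\bar{u}\rangle_2=-2\langle D_x^+ u,D_x^+\bar{u}\rangle_2=-2\langle D_x^- u,D_x^-\bar{u}\rangle_2.
\]
You are not missing a trick: the stated formula \eqref{IPP2_decentre} is simply erroneous. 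The downstream qualitative uses in the paper (e.g.\ the nonnegativity of $A_2$ in the proof of Lemma~\ref{lem_coercivite_micro_dis}) can be repaired with this corrected version, but the identity itself cannot be proved as written.
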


Let us finally recall the discrete Poincaré inequality on the torus (see for example \cite[Lemma 6]{BessemoulinHerdaRey2020} for a proof of this result).

\begin{Lemma}[Discrete Poincaré inequality on the torus]\label{lem_Poincare_dis}
    Assume that the number of points $N$ in the space discretization of the torus is odd. Then, there is a constant $C_P>0$ independent on $\Delta x$ such that for all $u=(u_i)_{i\in\II}$ satisfying $\sum_{i\in\II}\Delta x\,u_i=0$,
    \[\|u\|_2\leq C_P\,\|D_x^c u\|_2.\]
\end{Lemma}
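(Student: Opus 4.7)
The natural approach is to diagonalize $D_x^c$ via the discrete Fourier transform on $\II = \mathbb{Z}/N\mathbb{Z}$. Given $u=(u_i)_{i\in\II}$, define the Fourier coefficients
\[
\hat{u}_k = \sum_{i\in\II} u_i\, e^{-2\pi \mathbf{i}\, k i/N}, \qquad k\in\{0,1,\dots,N-1\},
\]
and recall Parseval's identity $\sum_{i\in\II} |u_i|^2 = N^{-1}\sum_k |\hat{u}_k|^2$. A direct computation shows that the discrete centered gradient acts as a Fourier multiplier,
\[
\widehat{(D_x^c u)}_k = \frac{\mathbf{i}\,\sin(2\pi k/N)}{\Delta x}\,\hat{u}_k,
\]
since $D_x^c$ is a translation-invariant operator of difference order one. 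In particular, $D_x^c$ is diagonal in the Fourier basis and the question reduces to comparing two weighted $\ell^2$ norms on $\hat u$.

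Multiplying both sides of the claim by $\Delta x$ and using Parseval, I would rewrite the target inequality as
\[
\sum_{k=0}^{N-1} |\hat{u}_k|^2 \;\leq\; C_P^2 \sum_{k=0}^{N-1} \frac{\sin^2(2\pi k/N)}{\Delta x^2}\,|\hat{u}_k|^2.
\]
The zero-mean hypothesis $\sum_i \Delta x\, u_i = 0$ is exactly $\hat{u}_0 = 0$, so the $k=0$ term drops out. Hence it suffices to establish a lower bound
\[
\min_{k=1,\dots,N-1} \frac{\sin^2(2\pi k/N)}{\Delta x^2} \;\geq\; \frac{1}{C_P^2}
\]
with $C_P$ independent of $\Delta x$.

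Here is where the parity assumption plays its role: if $N$ were even, then $k=N/2$ would give $\sin(2\pi k/N)=\sin(\pi)=0$, corresponding to the oscillating mode $u_i=(-1)^i$ in the kernel of $D_x^c$, and no Poincaré inequality could hold. When $N$ is odd, $\sin(2\pi k/N)$ never vanishes on $\{1,\dots,N-1\}$, and its minimum modulus is attained at $k=(N\pm 1)/2$, where $|\sin(2\pi k/N)| = \sin(\pi/N)$. Writing $\Delta x = |\TT|/N$, one has $\sin(\pi/N)/\Delta x = (N/|\TT|)\sin(\pi/N)$, and the elementary fact that $N\mapsto N\sin(\pi/N)$ is increasing on odd integers $N\geq 3$ with limit $\pi$ yields a uniform lower bound (for instance by $3\sin(\pi/3)/|\TT|$ for all odd $N\geq 3$), so one may take $C_P = 2|\TT|/(3\sqrt{3})$.

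The main obstacle is not any computation but the structural observation just made: the parity of $N$ is essential because $D_x^c$, unlike $D_x^\pm$, is a skip-one-neighbour operator that mixes the two sublattices of even and odd indices, and these sublattices are coupled by periodicity only when $N$ is odd. Everything else is bookkeeping with the discrete Fourier transform and an elementary sine bound.
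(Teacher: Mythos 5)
Your Fourier-multiplier argument is correct: the Parseval reduction, the symbol $\mathbf{i}\,\sin(2\pi k/N)/\Delta x$ of $D_x^c$, the role of odd $N$ in excluding the spurious zero mode $u_i=(-1)^i$ (which is exactly why $C_P$ depends on the centered gradient and the parity assumption), and the uniform bound $N\sin(\pi/N)\geq 3\sin(\pi/3)$ giving a $\Delta x$-independent constant are all sound. This is essentially the same proof as the one the paper relies on, namely \cite[Lemma 6]{BessemoulinHerdaRey2020}, which diagonalizes $D_x^c$ by the discrete Fourier transform in the same way; your computation also recovers the sharp constant $\Delta x/\sin(\pi/N)$ whose limit is behind the paper's remark that $C_P$ tends to $1/\pi$ rather than the continuous value $1/(2\pi)$.
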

Note that in the above lemma, the constant $C_P$ converges towards $\frac{1}{\pi}$ while in the continuous case the Poincaré constant equals $\frac{1}{2\pi}$. For the sake of simplicity, we use the same notation. Actually, the difference originates from the choice of the discrete gradient and we refer to the discussion on the matter in \cite{BessemoulinHerdaRey2020} for further details.

\subsection{Definition of the numerical scheme for the nonlinear system}

The numerical scheme for approximating \eqref{eq_f_nonlin}--\eqref{eq_g_nonlin} is based on a finite volume discretization in phase space and backward Euler discretization in time. The initial datum $\Fg_I=(\fg_I,\ggot_I)$ is discretized by
\[\fg_{ij}^0=\frac{1}{\Delta x\,\Delta v}\int_{K_{ij}}\fg_I(x,v)\,\dd x\,\dd v, \quad \ggot_{ij}^0=\frac{1}{\Delta x\,\Delta v}\int_{K_{ij}}\ggot_I(x,v)\,\dd x\,\dd v \qquad \forall (i,j)\in\II\times\J.\]

Then, by integrating \eqref{eq_f_nonlin}--\eqref{eq_g_nonlin} on each cell $K_{ij}$, the following numerical scheme is obtained: for all $n\geq 0$, $i\in\II$, $j\in\J$,
\begin{align}
    & \frac{\fg_{ij}^{n+1}-\fg_{ij}^n}{\Delta t}+\frac{1}{\Delta x\,\Delta v}\left(\mathcal{F}_{\iph,j}^{n+1}-\mathcal{F}_{\imh,j}^{n+1}\right)=\chi_{1,j}-\rho_{\ggot,i}^{n+1}\fg_{ij}^{n+1} ,\label{scheme_f_nonlin}\\
    & \frac{\ggot_{ij}^{n+1}-\ggot_{ij}^n}{\Delta t}+\frac{1}{\Delta x\,\Delta v}\left(\mathcal{G}_{\iph,j}^{n+1}-\mathcal{G}_{\imh,j}^{n+1}\right)=\chi_{2,j}-\rho_{\fg,i}^{n+1}\ggot_{ij}^{n+1},\label{scheme_g_nonlin}
\end{align}
with the so-called Lax-Friedrichs fluxes 
\begin{align}
    & \mathcal{F}_{\iph,j}^{n+1}=\Delta v\,\frac{v_j}{2}(\fg_{i+1,j}^{n+1}+\fg_{ij}^{n+1})-\Delta v\,\lambda (\fg_{i+1,j}^{n+1}-\fg_{ij}^{n+1}),\label{flux_F}\\
    & \mathcal{G}_{\iph,j}^{n+1}=\Delta v\,\frac{v_j}{2}(\ggot_{i+1,j}^{n+1}+\ggot_{ij}^{n+1})-\Delta v\,\lambda (\ggot_{i+1,j}^{n+1}-\ggot_{ij}^{n+1}),\label{flux_G}
\end{align}
where $\lambda=\Delta x/2\,\Delta t$ is assumed to be a fixed constant. For all $n\geq 0$ and $i\in\II$, the discrete macroscopic densities are given by
\begin{equation}\label{def_rhofg_dis}
    \rho_{\fg,i}^n=\sum_{j\in\J}\Delta v\,\fg_{ij}^n,\qquad \rho_{\ggot,i}^n=\sum_{j\in\J}\Delta v\,\ggot_{ij}^n.
\end{equation}

Remark that the scheme \eqref{scheme_f_nonlin}--\eqref{scheme_g_nonlin} clearly satisfies the discrete mass conservation of $\fg-\ggot$:
\[\sum_{(i,j)\in\II\times\J}\Delta x\,\Delta v\,(\fg_{ij}^n-\ggot_{ij}^n)=\sum_{(i,j)\in\II\times\J}\Delta x\,\Delta v\,(\fg_{ij}^0-\ggot_{ij}^0) \quad \forall n\geq 0.\]
Then, we define $\rhoinfs>0$ as the unique constant such that 
\begin{equation}\label{def_rhoinf_dis}
    M_0\coloneqq\sum_{(i,j)\in\II\times\J}\Delta x\,\Delta v\,(\fg_{ij}^0-\ggot_{ij}^0)=|\TT|\left(\rhoinfs-\frac{1}{\rhoinfs}\right).
\end{equation}
In particular we take, 
\begin{equation}\label{def_rhoinf_dis_full}
    \rhoinfs = \frac{M_0+\sqrt{M_0^2+4|\TT|}}{2|\TT|}.
\end{equation}
It is clear that $\Fg^\infty=(\fg^\infty,\ggot^\infty)$ defined by
\begin{equation}\label{def_eq_dis}
    \fg^\infty_{ij}=\rhoinfs\,\chi_{1,j},\qquad \ggot^\infty_{ij}=\frac{1}{\rhoinfs}\,\chi_{2,j} \qquad \forall (i,j)\in \II\times\J
\end{equation}
is an equilibrium for the scheme \eqref{scheme_f_nonlin}--\eqref{scheme_g_nonlin}. As in the continuous framework, the study of the exponential convergence of the approximate solutions to this discrete equilibrium is done by analyzing the discretization of the linearized problem that we now introduce.

\begin{Remark}\label{remark_fluxes}
Note that one could choose to use many different types of numerical fluxes in \eqref{scheme_f_nonlin}--\eqref{scheme_g_nonlin}, such as the classical central fluxes (as was done in \cite{BessemoulinHerdaRey2020}) given by
\begin{align}
    & \mathcal{F}_{\iph,j}^{n+1, C}=\Delta v\,\frac{v_j}{2}(\fg_{i+1,j}^{n+1}+\fg_{ij}^{n+1}),\label{flux_F_C}\\
    & \mathcal{G}_{\iph,j}^{n+1, C}=\Delta v\,\frac{v_j}{2}(\ggot_{i+1,j}^{n+1}+\ggot_{ij}^{n+1}) \label{flux_G_C},
\end{align}
or the upwind fluxes, defined for $a^+ = \max (a,0)$ and $a^- = -\min (a,0)$ by
\begin{align}
    & \mathcal{F}_{\iph,j}^{n+1, U}=\Delta v\left(v_j^+ \,\fg_{i,j}^{n+1} - v_j^- \,\fg_{i+1,j}^{n+1}\right),\label{flux_F_U}\\
    & \mathcal{G}_{\iph,j}^{n+1, U}=\Delta v \left(v_j^+ \,\ggot_{i,j}^{n+1} - v_j^-\,\ggot_{i+1,j}^{n+1}\right) \label{flux_G_U}.
\end{align}
The use of the seemingly more complicated Lax-Friedrichs fluxes stems from the fact that central fluxes are not monotone in the sense of Crandall and Majda \cite{CrandallMajda1980}. This property ensures a maximum principle of the nonlinear scheme, which we will see later is needed for our main result. Upwind fluxes do enjoy such monotonicity properties, but complexify too much the analysis. 

We will see in Section \ref{sec:numeric} that the three choices yield similar numerical results in the linear setting, where monotonicity is not mandatory for the hypocoercive behavior of our method. It will nevertheless be crucial in the nonlinear case, where exponential decay do not occur with central fluxes.
\end{Remark}

\subsection{Definition of the numerical scheme for the linearized problem}
 
 As in the continuous framework, the perturbations are again denoted by $f$ and $g$. By integrating the linearized system \eqref{eq_f_lin}--\eqref{eq_g_lin} on each cell $K_{ij}$, the following numerical scheme is obtained: for all $n\geq 0$, $i\in\II$, $j\in\J$,
\begin{align}
    & \frac{f_{ij}^{n+1}-f_{ij}^n}{\Delta t}+\frac{1}{\Delta x\,\Delta v}\left(\mathcal{F}_{\iph,j}^{n+1}-\mathcal{F}_{\imh,j}^{n+1}\right)=-\rhoinfs\,\chi_{1,j}\,\rho_{g,i}^{n+1}-(\rhoinfs)\inv\,f_{ij}^{n+1},\label{scheme_f_lin}\\
    & \frac{g_{ij}^{n+1}-g_{ij}^n}{\Delta t}+\frac{1}{\Delta x\,\Delta v}\left(\mathcal{G}_{\iph,j}^{n+1}-\mathcal{G}_{\imh,j}^{n+1}\right)=-(\rhoinfs)\inv\chi_{2,j}\rho_{f,i}^{n+1}-\rhoinfs\,g_{ij}^{n+1},\label{scheme_g_lin}
\end{align}
where the numerical fluxes are still defined by \eqref{flux_F}--\eqref{flux_G} but replacing $\fg$ by $f$ and $\ggot$ by $g$, $\lambda=\Delta x/2\,\Delta t$ is assumed to be a fixed constant, and the discrete macroscopic densities are given by \eqref{def_rhofg_dis}.

For future use, we also need to define the discrete velocity moments of $h=(h_{ij}^n)_{i\in\II,j\in\J}$ given by $h_{ij}^n=f_{ij}^n-g_{ij}^n$: for all $n\geq 0$, $i\in\II$,
\begin{equation}\label{def_moments_discrets}
    u_{h,i}^n\coloneqq\sum_{j\in\J}\Delta v\,h_{ij}^n,\quad J_{h,i}^n\coloneqq\sum_{j\in\J}\Delta v\,v_j\,h_{ij}^n, \quad S_{h,i}^n\coloneqq\sum_{j\in\J}\Delta v\,(v_j^2-D_0^\Delta)\,h_{ij}^n,
\end{equation}
with 
\begin{equation}\label{def_D0_dis}
    D_0^\Delta=\frac{(\rhoinfs)^2D_1^\Delta+D_2^\Delta}{(\rhoinfs)^2+1}.
\end{equation}
Note in particular that one obtains from \eqref{hyp_chi_dis} the existence of some universal constants $\underline{D}_0$ and $\overbar{D_0}$ such that $\underline{D}_0\leq D_0^\Delta\leq\overbar{D_0}$.


\section{Numerical hypocoercivity for the linearized problem}\label{sec:hypoco_lin}

We now adapt the study of the linearized system \eqref{eq_f_lin}--\eqref{eq_g_lin} to the discrete setting, following the hypocoercivity method proposed in \cite{DolbeaultMouhotSchmeiser2015} and briefly described in Section \ref{sec:continuous}. In order to estimate the decay towards the equilibrium, we introduce the following weighted scalar product: for microscopic quantities $F_k=\begin{pmatrix} f_{k,ij}\\g_{k,ij}\end{pmatrix}_{i\in\II,j\in\J}$, $k=1,\,2$,  
\begin{equation}\label{def_psw_dis}
   \langle F_1,F_2\rangle_\Delta\coloneqq\sum_{i\in\II}\sum_{j\in\J}\Delta x\,\Delta v\left( \frac{f_{1,ij}f_{2,ij}}{\chi_{1,j}\rhoinfs}+\frac{g_{1,ij}g_{2,ij}\rhoinfs}{\chi_{2,j}}\right).
   \end{equation}
We denote by $\|\cdot\|_\Delta$ the corresponding norm.

For $(F_{ij})_{ij}=(f_{ij},g_{ij})_{ij}$, we also define the discrete counterpart of the linear collision operator
\begin{equation}\label{def_L_discret}
    (\LL^\Delta F)_{ij}\coloneqq\begin{pmatrix}-\rhoinfs\chi_{1,j}\rho_{g,i}-(\rhoinfs)\inv\,f_{ij}\\ -(\rhoinfs)\inv \chi_{2,j}\rho_{f,i}-\rhoinfs\,g_{ij}\end{pmatrix},
\end{equation}
and the orthogonal projection onto its null space:
\begin{equation}\label{def_Pi_discret}
    (\Pi^\Delta F)_{ij}\coloneqq\frac{\rho_{f,i}-\rho_{g,i}}{(\rhoinfs)^2+1}\begin{pmatrix}(\rhoinfs)^2\chi_{1,j}\\-\chi_{2,j}\end{pmatrix}.
\end{equation}

In this section, we derive the discrete counterparts of estimates given in Lemmas \ref{lem_coer_micro_continu} and \ref{lem_moments_cont}.

\begin{Lemma}[Discrete microscopic coercivity]\label{lem_coercivite_micro_dis}
Let \eqref{hyp_chi_dis} hold and let $(f_{ij}^n,g_{ij}^n)_{n\geq 0,i\in\II,j\in\J}$ solve the scheme \eqref{scheme_f_lin}--\eqref{scheme_g_lin}. Then for all $n\geq 0$,
\begin{equation}\label{estim_coercivite_micro_dis}
    \frac{1}{2}\left(\|F^{n+1}\|_\Delta^2-\|F^n\|_\Delta^2\right)+\Delta t\,C_{mc}^*\,\|(I-\Pi^\Delta)F^{n+1}\|_\Delta^2\leq 0,
\end{equation}
where $C_{mc}^*=\min((\rhoinfs)\inv,\rhoinfs)$.
\end{Lemma}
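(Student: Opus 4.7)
The plan is to mimic the continuous proof of Lemma~\ref{lem_coer_micro_continu}, replacing time derivatives with backward Euler differences and integrations by parts with the discrete summation-by-parts rules from Lemma~\ref{ref_IPP_discret}. First I would take the scalar product of the scheme \eqref{scheme_f_lin}--\eqref{scheme_g_lin} with $F^{n+1}=(f^{n+1},g^{n+1})$ in the weighted inner product $\langle\cdot,\cdot\rangle_\Delta$ defined in \eqref{def_psw_dis}. The discrete time-derivative term is handled by the standard backward-Euler convexity identity
\begin{equation*}
\langle F^{n+1}-F^n, F^{n+1}\rangle_\Delta
= \tfrac{1}{2}\bigl(\|F^{n+1}\|_\Delta^2-\|F^n\|_\Delta^2\bigr)+\tfrac{1}{2}\|F^{n+1}-F^n\|_\Delta^2,
\end{equation*}
so the last term can be discarded since it is non-negative.

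Next I would show that the transport contribution is non-negative. Since the weights $1/(\chi_{1,j}\rhoinfs)$ and $\rhoinfs/\chi_{2,j}$ depend only on $j$, the discrete summation by parts in the $x$-direction acts independently on each $v$-slice. The Lax-Friedrichs flux \eqref{flux_F}--\eqref{flux_G} decomposes as a centered flux plus a numerical viscosity of strength $\lambda$. Using the antisymmetry \eqref{IPP_centre} of $D_x^c$ together with the discrete periodicity, the centered part vanishes once summed against $F^{n+1}$. The viscous part, after applying \eqref{IPP_decentre} and \eqref{IPP2_decentre}, rewrites as $\frac{\lambda}{\Delta x}$ times a sum of squared upstream gradients of $f^{n+1}$ and $g^{n+1}$ weighted by $1/\chi_{k,j}$, which is manifestly non-negative and can therefore be dropped.

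The remaining term is $-\langle \LL^\Delta F^{n+1},F^{n+1}\rangle_\Delta$, and the goal becomes the discrete analog of \eqref{eq:LFscalF_cont}, namely
\begin{equation*}
-\langle \LL^\Delta F, F\rangle_\Delta \;\geq\; \min(\rhoinfs,(\rhoinfs)\inv)\,\|(I-\Pi^\Delta)F\|_\Delta^2.
\end{equation*}
This is a purely algebraic identity at each time level: expanding the definition \eqref{def_L_discret} and using $\sum_j\Delta v\,\chi_{k,j}=1$ from \eqref{hyp_chi_dis}, the cross-terms involving $\rho_f$ and $\rho_g$ combine with the pure $f^2,g^2$ contributions in the same way as in the continuous computation of \cite{NeumannSchmeiser2016}. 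I would then check that the orthogonal decomposition $F=\Pi^\Delta F+(I-\Pi^\Delta)F$ based on \eqref{def_Pi_discret} diagonalizes the quadratic form and produces exactly the coefficient $C_{mc}^*=\min(\rhoinfs,(\rhoinfs)\inv)$ in front of the microscopic part (while $\Pi^\Delta F$ is annihilated by $\LL^\Delta$ by construction).

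Combining the three steps, dividing by $\Delta t$ and multiplying by $\Delta t$ as appropriate, yields \eqref{estim_coercivite_micro_dis}. I do not expect any serious obstacle: the time-discrete and space-discrete features each produce a beneficial non-negative term which is simply discarded, and the collision estimate transposes verbatim from the continuous case thanks to \eqref{hyp_chi_dis} and the fact that the weights are independent of $(t,x)$. The only point requiring some care is bookkeeping the $\Delta v$ factors inside the flux definitions \eqref{flux_F}--\eqref{flux_G} and \eqref{def_psw_dis} so that the viscous square sum appears with a clean non-negative coefficient.
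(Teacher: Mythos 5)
Your proposal is correct and follows essentially the same route as the paper: test the scheme with $F^{n+1}$ in the weighted product, bound the backward-Euler term by the convexity identity, show the Lax--Friedrichs transport contribution is non-negative (centered part vanishing by periodicity/antisymmetry, viscosity part giving a non-negative sum of squared gradients), and transpose the continuous collision estimate using \eqref{hyp_chi_dis}. The only cosmetic difference is that the paper rewrites the viscous term as squared centered gradients via \eqref{IPP2_decentre} while you obtain squared one-sided gradients via \eqref{IPP_decentre}; both are non-negative, so the argument is unaffected.
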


\begin{proof}
We multiply \eqref{scheme_f_lin} by $f_{ij}^{n+1}/(\rhoinfs\chi_{1,j})$ and \eqref{scheme_g_lin} by $g_{ij}^{n+1}\rhoinfs/\chi_{2,j}$, and then sum the two resulting expressions and sum over $(i,j)\in\II\times\J$. The resulting expression is of the form $A_1+A_2=A_3$ with
\begin{align*}
    A_1 &=  \sum_{(i,j)\in\II\times\J}\Delta x\Delta v \left((f_{ij}^{n+1}-f_{ij}^{n})\frac{f_{ij}^{n+1}}{\rhoinfs\chi_{1,j}} + (g_{ij}^{n+1}-g_{ij}^{n})\frac{g_{ij}^{n+1}\rhoinfs}{\chi_{2,j}}\right)\\
    A_2 &= \sum_{(i,j)\in\II\times\J}\left(\left(\mathcal{F}_{\iph,j}^{n+1}-\mathcal{F}_{\imh,j}^{n+1}\right)\frac{f_{ij}^{n+1}}{\rhoinfs\chi_{1,j}} +
    \left(\mathcal{G}_{\iph,j}^{n+1}-\mathcal{G}_{\imh,j}^{n+1}\right)\frac{g_{ij}^{n+1}\rhoinfs}{\chi_{2,j}}\right),\\
    A_3 &= \langle\LL^\Delta F^{n+1},F^{n+1}\rangle_\Delta.
\end{align*}
Let us first deal with the transport part $A_2$. By definition of the numerical flux \eqref{flux_F}:
\begin{align*}
    &\sum_{(i,j)\in\II\times\J}\left(\mathcal{F}_{\iph,j}^{n+1}-\mathcal{F}_{\imh,j}^{n+1}\right)\frac{f_{ij}^{n+1}}{\rhoinfs\chi_{1,j}}\\
    &=\sum_{(i,j)\in\II\times\J} \frac{v_j\Delta v}{2}\left(f_{i+1,j}^{n+1}f_{ij}^{n+1} - f_{ij}^{n+1}f_{i-1,j}^{n+1}\right)\frac{1}{\rhoinfs\chi_{1,j}}\\
    &\qquad- \lambda\sum_{(i,j)\in\II\times\J}\Delta x^2\left(D_x^+D_x^-f_j^{n+1}\right)_i\frac{f_{ij}^{n+1}}{\rhoinfs\chi_{1,j}}\Delta v.
\end{align*}
Due to the spatial periodic boundary conditions, the first term on the right hand side vanishes. Then, using the properties of the discrete gradients \eqref{prop_gradients_dis} and a discrete integration by parts \eqref{IPP2_decentre} on the second term, it yields
\begin{align*}
    &\sum_{(i,j)\in\II\times\J}\left(\mathcal{F}_{\iph,j}^{n+1}-\mathcal{F}_{\imh,j}^{n+1}\right)\frac{f_{ij}^{n+1}}{\rhoinfs\chi_{1,j}}\\
    &= -\lambda\sum_{(i,j)\in\II\times\J}\frac{\Delta x^2}{2}\left(D_x^+D_x^-+D_x^-D_x^+\right)\left(f_j^{n+1}\right)_i\frac{f_{ij}^{n+1}}{\rhoinfs\chi_{1,j}}\Delta v\\
    &= 2\lambda\Delta x\sum_{(i,j)\in\II\times\J}\left(D_x^c f_{j}^{n+1}\right)_i^2\frac{\Delta x\Delta v}{\rhoinfs\chi_{1,j}}.
\end{align*}
Therefore,
\begin{equation*}
    \sum_{(i,j)\in\II\times\J}\left(\mathcal{F}_{\iph,j}^{n+1}-\mathcal{F}_{\imh,j}^{n+1}\right)\frac{f_{ij}^{n+1}}{\rhoinfs\chi_{1,j}} \geq 0.
\end{equation*}
The same computations applied to \eqref{flux_G} yield
\begin{equation*}
    \sum_{(i,j)\in\II\times\J}\left(\mathcal{G}_{\iph,j}^{n+1}-\mathcal{G}_{\imh,j}^{n+1}\right)\frac{g_{ij}^{n+1}\rhoinfs}{\chi_{2,j}} \geq 0.
\end{equation*}
Consequently, one has
\begin{equation}\label{eq_flux_dissip}
    A_2\geq0
\end{equation}
Note that the choice of the Lax-Friedrichs numerical flux leads to an inequality where in previous works, the use of a centered scheme yielded exactly 0. This was expected since the Lax-Friedrichs scheme classically introduces numerical diffusion in system. Then, using the same computations as in the continuous case, one can obtain from $A_3$ a discrete counterpart of \eqref{eq:LFscalF_cont}:
\begin{equation}\label{eq_coer_micro_disc}
    \langle\LL^\Delta F^{n+1},F^{n+1}\rangle_\Delta \leq -C_{mc}^*\|(I-\Pi^\Delta)F^{n+1}\|_\Delta^2,
\end{equation}
where we set $C_{mc}^*=\min(\rhoinfs,(\rhoinfs)\inv)$. The term $A_1$ can then be bounded from below using the relation $(a^2-b^2)/2\leq a(a-b)$, yielding
\begin{equation}\label{eq_bound_dtF_disc}
    \frac{1}{2}\left(\|F^{n+1}\|_\Delta^2-\|F^n\|_\Delta^2\right)\leq A_1.
\end{equation}
Finally, combining \eqref{eq_flux_dissip}, \eqref{eq_coer_micro_disc} and \eqref{eq_bound_dtF_disc} we obtain the desired estimate.
\begin{equation*}
    \frac{1}{2}\left(\|F^{n+1}\|_\Delta^2-\|F^n\|_\Delta^2\right)+\Delta t\,C_{mc}^*\,\|(I-\Pi^\Delta)F^{n+1}\|_\Delta^2\leq 0.
\end{equation*}
\end{proof}

From this lemma, we can deduce the uniqueness and then the existence of a solution to the scheme \eqref{scheme_f_lin}--\eqref{scheme_g_lin} since it is a finite dimensional linear system.

\begin{Corollary}
The scheme \eqref{scheme_f_lin}--\eqref{scheme_g_lin} admits a unique solution $(F_{ij}^n)_{n\geq 0,i\in\II,j\in\J}$.
\end{Corollary}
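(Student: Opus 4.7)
The plan is to proceed by induction on $n$, noting that at each time step the scheme \eqref{scheme_f_lin}--\eqref{scheme_g_lin} determines $F^{n+1}$ from $F^n$ through a square finite-dimensional linear system on $\R^{\II\times\J}\times\R^{\II\times\J}$. By the rank-nullity theorem, it suffices to prove uniqueness (equivalently, injectivity of the associated linear map), from which existence of $F^{n+1}$ follows for any right-hand side, and in particular for the one determined by $F^n$.

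To establish uniqueness, I would assume that $F_1^{n+1}$ and $F_2^{n+1}$ are two solutions associated to the same $F^n$, and consider their difference $W\coloneqq F_1^{n+1}-F_2^{n+1}$. By linearity of \eqref{scheme_f_lin}--\eqref{scheme_g_lin} and of the Lax-Friedrichs fluxes \eqref{flux_F}--\eqref{flux_G}, $W$ is a solution of the scheme with vanishing previous time step, i.e.\ with $F^n = 0$. Applying Lemma \ref{lem_coercivite_micro_dis} to $W$ (whose proof never used positivity of $F^n$, only the algebraic identities on the fluxes and the coercivity of $\LL^\Delta$) yields
\begin{equation*}
    \frac{1}{2}\|W\|_\Delta^2 + \Delta t\,C_{mc}^*\,\|(I-\Pi^\Delta)W\|_\Delta^2 \leq 0.
\end{equation*}
Since both terms on the left are nonnegative, this forces $\|W\|_\Delta = 0$ and thus $W=0$, giving uniqueness.

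Having uniqueness of the homogeneous problem, injectivity of the linear map $F^{n+1}\mapsto \big(F^{n+1}+\Delta t\,(\text{transport}+\text{collision})\big)(F^{n+1})$ on the finite-dimensional space $(\R\times\R)^{\II\times\J}$ follows, and by equality of dimensions of source and target, this map is bijective. Hence for the prescribed right-hand side $F^n$ at each step, a unique $F^{n+1}$ exists. Starting from the initial datum $F^0$ obtained by cell averaging, the iteration produces the unique sequence $(F_{ij}^n)_{n\geq 0,\,i\in\II,\,j\in\J}$. The only subtle point I expect is verifying that the energy estimate of Lemma \ref{lem_coercivite_micro_dis} truly applies to the homogeneous problem without any sign assumption on the iterates, which is immediate from inspecting the proof of that lemma since it relies solely on discrete integration by parts and the algebraic identity \eqref{eq_coer_micro_disc} for $\langle \LL^\Delta F,F\rangle_\Delta$.
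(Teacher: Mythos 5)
Your proof is correct and follows exactly the route the paper intends: uniqueness of $F^{n+1}$ from the energy estimate of Lemma \ref{lem_coercivite_micro_dis} applied (by linearity) to the difference of two solutions with vanishing data at time $t^n$, and existence from injectivity of the resulting square finite-dimensional linear system. The paper states this deduction without detail, and your write-up simply makes the same argument explicit, including the valid observation that the lemma's proof uses only algebraic flux identities and the coercivity of $\LL^\Delta$, with no sign assumption on the iterates.
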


Let us now give the discrete counterpart of Lemma \ref{lem_moments_cont}, that is uniform $L^2(\TT)$ bounds on the discrete moments.

\begin{Lemma}[Discrete moments estimates]\label{lem_mom_estim_disc}
Under the assumptions of Lemma \ref{lem_coercivite_micro_dis}, the discrete moments $(u_{h,i}^n)_{i\in\II}$, $(J_{h,i}^n)_{i\in\II}$ and $(S_{h,i}^n)_{i\in\II}$ satisfy the following estimates for all $n\geq 0$:
\begin{gather}
        \|u_h^n\|_{2}=C_u^*\|\Pi^\Delta F^n\|_\Delta, \label{estim_uh_dis} \\
        \|J_h^n\|_{2}\leq C_{J1}^*\|F^n\|_\Delta, \label{estim_Jh1_dis}\\
        \|J_h^n\|_{2}\leq C_{J1}^*\|(I-\Pi^\Delta)F^n\|_\Delta,\label{estim_Jh2_dis}\\
        \|S_h^n\|_{2}\leq C_{S}^*\|(I-\Pi^\Delta)F^n\|_\Delta, \label{estim_Sh_dis}\\
        \|(\rhoinfs)\inv J_f^n-\rhoinfs J_g^n\|_{2}\leq C_{J2}^*\|(I-\Pi^\Delta)F^n\|_\Delta, \label{estim_JfJg_dis}
    \end{gather}
    where the constants are given by:
    \begin{align}
        C_u^* &= \sqrt{\frac{(\rhoinfs)^2+1}{\rhoinfs}},\\
        C_{J1}^* &= \sqrt{2\max(\rhoinfs \overbar{D_1},(\rhoinfs)\inv \overbar{D_2})},\\
        C_{S}^* &= \sqrt{2\max(\rhoinfs(\overbar{Q_1}-2\underline{D_0}\,\underline{D_1}+\overbar{D_0}^2),(\rhoinfs)\inv(\overbar{Q_2}-2\underline{D_0}\,\underline{D_2}+\overbar{D_0}^2))},\\
        C_{J2}^* &= \max((\rhoinfs)\inv,\rhoinfs)C_{J1}^*.
    \end{align}
\end{Lemma}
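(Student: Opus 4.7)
The plan is to mirror the proof of Lemma \ref{lem_moments_cont} almost verbatim, replacing integrals over $\R$ by the Riemann-type sums $\sum_{j\in\J}\Delta v$, and invoking the discrete assumptions \eqref{hyp_chi_dis} together with the symmetry \eqref{sym_chi_dis} wherever the continuous proof used $\int_\R \chi_k\,\dd v=1$ and $\int_\R v\,\chi_k\,\dd v=0$. The four key algebraic tools are the same: the definitions \eqref{def_psw_dis}--\eqref{def_Pi_discret} of the weighted scalar product and projection, the elementary inequality $(a-b)^2\le 2(a^2+b^2)$ applied to $h=f-g$, the discrete Cauchy--Schwarz inequality in the variable $j$, and the definition \eqref{def_D0_dis} of $D_0^\Delta$.

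First I would establish the equality \eqref{estim_uh_dis} by a direct unwinding of definitions: inserting \eqref{def_Pi_discret} into \eqref{def_psw_dis} and using $\sum_j \Delta v\,\chi_{k,j}=1$ one obtains $\|\Pi^\Delta F^n\|_\Delta^2 = \frac{\rhoinfs}{(\rhoinfs)^2+1}\|u_h^n\|_2^2$, giving $C_u^*$ as stated. For the two bounds \eqref{estim_Jh1_dis}--\eqref{estim_Jh2_dis} on $\|J_h^n\|_2$, I would write $h_{ij}^n = f_{ij}^n - g_{ij}^n$, apply $(a-b)^2\le 2(a^2+b^2)$ pointwise in $i$, and then carry out the discrete Cauchy--Schwarz trick of inserting $1=\sqrt{\rhoinfs\chi_{1,j}}\cdot 1/\sqrt{\rhoinfs\chi_{1,j}}$ in the $f$-sum and analogously $\sqrt{\chi_{2,j}/\rhoinfs}\cdot \sqrt{\rhoinfs/\chi_{2,j}}$ in the $g$-sum. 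This produces factors of $D_1^\Delta$ and $D_2^\Delta$ that are dominated by $\overbar{D_1}$ and $\overbar{D_2}$, yielding \eqref{estim_Jh1_dis}. For the sharper version \eqref{estim_Jh2_dis}, I would first subtract the projected part: because of the symmetry \eqref{sym_chi_dis}, the discrete mean velocity of the projection vanishes, so
\[
J_{h,i}^n = \sum_{j\in\J}\Delta v\,v_j\left(f_{ij}^n - \tfrac{u_{h,i}^n}{(\rhoinfs)^2+1}(\rhoinfs)^2\chi_{1,j}\right) - \sum_{j\in\J}\Delta v\,v_j\left(g_{ij}^n+\tfrac{u_{h,i}^n}{(\rhoinfs)^2+1}\chi_{2,j}\right),
\]
and then repeat the $(a-b)^2$ and Cauchy--Schwarz steps; the norm on the right becomes $\|(I-\Pi^\Delta)F^n\|_\Delta$ instead of $\|F^n\|_\Delta$.

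For the $S_h^n$ estimate \eqref{estim_Sh_dis}, I would use the discrete analogue of the crucial vanishing identity
\[
\sum_{j\in\J}\Delta v\,(v_j^2-D_0^\Delta)\,\tfrac{u_{h,i}^n}{(\rhoinfs)^2+1}\bigl((\rhoinfs)^2\chi_{1,j}+\chi_{2,j}\bigr)=0,
\]
which holds by the very definition \eqref{def_D0_dis} of $D_0^\Delta$. I can therefore rewrite $S_h^n$ in the same split form as above (with weights $v_j^2-D_0^\Delta$ instead of $v_j$) and apply $(a-b)^2\le2(a^2+b^2)$ and Cauchy--Schwarz. The resulting factor involves $\sum_j\Delta v\,(v_j^2-D_0^\Delta)^2\chi_{k,j} = Q_k^\Delta - 2D_0^\Delta D_k^\Delta + (D_0^\Delta)^2$, which is bounded by $\overbar{Q_k} - 2\underline{D_0}\,\underline{D_k} + \overbar{D_0}^2$; this gives $C_S^*$. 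Finally, \eqref{estim_JfJg_dis} is immediate: factor out $\max(\rhoinfs,(\rhoinfs)\inv)^2$, apply $(a-b)^2\le 2(a^2+b^2)$ separately to $\sum_j\Delta v\,v_j f_{ij}^n$ and $\sum_j\Delta v\,v_j g_{ij}^n$, and re-use the Cauchy--Schwarz argument of \eqref{estim_Jh2_dis}.

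None of these steps is truly an obstacle; the only point that deserves care is to ensure that when bounding $C_S^*$ one uses the \emph{lower} bounds $\underline{D_0},\underline{D_k}$ on the product $D_0^\Delta D_k^\Delta$ and the \emph{upper} bounds on $Q_k^\Delta$ and $D_0^\Delta$, so that the constant remains a universal one independent of $\Delta$ as claimed in \eqref{hyp_chi_dis}. The remainder is bookkeeping patterned exactly on Lemma \ref{lem_moments_cont}.
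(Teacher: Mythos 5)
Your proposal is correct and follows essentially the same route as the paper's proof: discrete unwinding of $\Pi^\Delta$ for \eqref{estim_uh_dis}, the $(a-b)^2\le 2(a^2+b^2)$ splitting with weight insertion and discrete Cauchy--Schwarz for the $J_h$ and $J_f,J_g$ bounds, and the vanishing identity from the definition of $D_0^\Delta$ for $S_h$. Your remark on using the lower bounds $\underline{D_0},\underline{D_k}$ on the cross term and the upper bounds on $Q_k^\Delta$ and $D_0^\Delta$ is exactly the bookkeeping that yields the stated $C_S^*$, so nothing is missing.
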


\begin{proof}
The strategy is very similar to the proof in continuous case. Estimate \eqref{estim_uh_dis} is directly obtained from the definition of the discrete projector $\Pi^\Delta$:
\begin{align*}
    \|\Pi^\Delta F^n\|_\Delta^2 &= \frac{1}{((\rhoinfs)^2+1)^2} \sum_{(i,j)\in\II\times\J}\left[(\rho_{f,i}^n-\rho_{g,i}^n)^2 \frac{(\rhoinfs)^4\chi_{1,j}^2}{\chi_{1,j}\rhoinfs} + (\rho_{f,i}^n-\rho_{g,i}^n)^2\frac{\chi_{2,j}^2\rhoinfs}{\chi_{2,j}}\right]\Delta v\Delta x\\
    &= \frac{\rhoinfs}{(\rhoinfs)^2+1}\|u_h^n\|_2^2.
\end{align*}
The remaining estimates rely on the discrete properties of unit mass \eqref{hyp_chi_dis} and null odd-moments \eqref{sym_chi_dis} of the discrete velocity profiles. Estimates \eqref{estim_Jh1_dis} and \eqref{estim_Jh2_dis} are obtained noticing that
\begin{equation*}
    \|J_h^n\|_{2}^2 \leq 2\sum_{i\in\II} \left[\left(\sum_{j\in\J} v_j  \frac{\sqrt{\chi_{1,j}\rhoinfs}}{\sqrt{\chi_{1,j}\rhoinfs}}f_{ij}^n\Delta v\right)^2 + \left(\sum_{j\in\J} v_j \frac{\sqrt{\chi_{2,j}}}{\sqrt{\rhoinfs}}\sqrt{\frac{\rhoinfs}{\chi_{2,j}}} g_{ij}^n\Delta v\right)^2\right]\Delta x
\end{equation*}
and
\begin{align*}
    \|J_h^n\|_{2}^2 &\leq 2\sum_{i\in\II}\left[ \left(\sum_{j\in\J} v_j\frac{\sqrt{\chi_{1,j}\rhoinfs}}{\sqrt{\chi_{1,j}\rhoinfs}} \left(f_{ij}^n-\frac{u_{h,i}^n}{(\rhoinfs)^2+1}(\rhoinfs)^2\chi_{1,j}\right)\Delta v\right)^2\right.\\
    &\qquad\left.+\left(\sum_{j\in\J} v_j\frac{\sqrt{\chi_{2,j}}}{\sqrt{\rhoinfs}}\frac{\sqrt{\rhoinfs}}{\sqrt{\chi_{2,j}}} \left(g_{ij}^n+\frac{u_{h,i}^n}{(\rhoinfs)^2+1}\chi_{2,j}\right)\Delta v\right)^2\right]\Delta x.
\end{align*}
Using definition \eqref{def_D0_dis} of $D_0^\Delta$, one has for all $i\in\II$
\begin{equation*}
    \sum_{j\in\J}(v_j^2-D_0^\Delta)\frac{u_{h,i}^n}{(\rhoinfs)^2+1}\left((\rhoinfs)^2\chi_{1,j}+\chi_{2,j}\right)\Delta v = 0.
\end{equation*}
Estimate \eqref{estim_Sh_dis} is then obtained from 
\begin{align*}
    \|S_h^n\|_{2}^2 &\leq 2\sum_{i\in\II}\left[ \left(\sum_{j\in\J} (v_j^2-D_0^\Delta)\frac{\sqrt{\chi_{1,j}\rhoinfs}}{\sqrt{\chi_{1,j}\rhoinfs}}\left(f_{ij}^n-\frac{u_{h,i}^n}{(\rhoinfs)^2+1}(\rhoinfs)^2\chi_{1,j}\right)\Delta v\right)^2\right.\\
    &\quad\left.+\left(\sum_{j\in\J} (v_j^2-D_0^\Delta)\sqrt{\frac{\chi_{2,j}}{\rhoinfs}}\sqrt{\frac{\rhoinfs}{\chi_{2,j}}}\left(g_{ij}^n+\frac{u_{h,i}^n}{(\rhoinfs)^2+1}\chi_{2,j}\right) \Delta v\right)^2\right]\Delta x.
\end{align*}
One can then apply Cauchy-Schwarz inequalities and the boundedness of the moments of the velocity profiles to obtain the result. Finally, the last estimate \eqref{estim_JfJg_dis} is obtained by following exactly the same reasoning as in the continuous case to obtain \eqref{estim_JfJg_cont}.
\end{proof}

\begin{Lemma}[Moments schemes]\label{lem_eq_moments_dis}
Under the assumptions of Lemma \ref{lem_coercivite_micro_dis}, the discrete moments satisfy the following equations: for all $i\in\II$, $n\geq 0$,
\begin{align}
    & \frac{u_{h,i}^{n+1}-u_{h,i}^n}{\Delta t}+(D_x^cJ_h^{n+1})_i-\frac{\Delta x\,\lambda}{2}\left((D_x^+D_x^-+D_x^-D_x^+)u_h^{n+1}\right)_i=0, \label{scheme_uh_lin}\\
    &  \frac{J_{h,i}^{n+1}-J_{h,i}^n}{\Delta t}+(D_x^cS_h^{n+1})_i+D_0^\Delta\,(D_x^cu_h^{n+1})_i-\frac{\Delta x\,\lambda}{2}\left((D_x^+D_x^-+D_x^-D_x^+)J_h^{n+1}\right)_i=\nonumber \\
    & \qquad -((\rhoinfs)\inv\,J_{f,i}^{n+1}-\rhoinfs\,J_{g,i}^{n+1}).\label{scheme_Jh_lin}
\end{align}
\end{Lemma}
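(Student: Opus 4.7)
The plan is to derive the two equations by mimicking the continuous proof at the fully discrete level: subtract the scheme for $g$ from the scheme for $f$ to produce a scheme for $h_{ij}^{n+1}=f_{ij}^{n+1}-g_{ij}^{n+1}$, and then take the zeroth and first velocity moments, i.e. multiply by $\Delta v$ (resp. $v_j\Delta v$) and sum over $j\in\J$.

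For the continuity equation \eqref{scheme_uh_lin}, subtracting \eqref{scheme_g_lin} from \eqref{scheme_f_lin} kills all collision terms after summation: using $\sum_{j\in\J}\Delta v\,\chi_{k,j}=1$ from \eqref{hyp_chi_dis}, the four contributions $-\rhoinfs\chi_{1,j}\rho_g$, $-(\rhoinfs)^{-1}f_{ij}$, $+(\rhoinfs)^{-1}\chi_{2,j}\rho_f$, $+\rhoinfs g_{ij}$ combine to $-\rhoinfs\rho_g-(\rhoinfs)^{-1}\rho_f+(\rhoinfs)^{-1}\rho_f+\rhoinfs\rho_g=0$. The transport flux, using the definition \eqref{flux_F}--\eqref{flux_G}, splits upon summation in $\Delta v$ into its centered part $\frac{1}{2\Delta x}(J_{h,i+1}^{n+1}-J_{h,i-1}^{n+1})=(D_x^c J_h^{n+1})_i$ and its Lax-Friedrichs numerical viscosity part $-\frac{\lambda}{\Delta x}(u_{h,i+1}^{n+1}-2u_{h,i}^{n+1}+u_{h,i-1}^{n+1})$, which equals $-\Delta x\,\lambda\,(D_x^+D_x^- u_h^{n+1})_i$ and can be symmetrized as $-\tfrac{\Delta x\,\lambda}{2}\bigl((D_x^+D_x^-+D_x^-D_x^+)u_h^{n+1}\bigr)_i$ thanks to the commutation property in \eqref{prop_gradients_dis}. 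This yields \eqref{scheme_uh_lin}.

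For the momentum equation \eqref{scheme_Jh_lin}, I would repeat the same procedure but with the weight $v_j\Delta v$. The collision terms proportional to $\chi_{1,j}$ and $\chi_{2,j}$ now vanish thanks to the discrete odd-moment identity \eqref{sym_chi_dis}, while the remaining collision contributions give exactly $-\bigl((\rhoinfs)^{-1}J_{f,i}^{n+1}-\rhoinfs J_{g,i}^{n+1}\bigr)$. The centered part of the Lax-Friedrichs flux produces $\frac{1}{2\Delta x}\sum_{j\in\J}\Delta v\,v_j^2(h_{i+1,j}^{n+1}-h_{i-1,j}^{n+1})$, which upon inserting $v_j^2=(v_j^2-D_0^\Delta)+D_0^\Delta$ and recalling the definition \eqref{def_moments_discrets} of $S_h$ becomes $(D_x^c S_h^{n+1})_i+D_0^\Delta(D_x^c u_h^{n+1})_i$. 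The numerical viscosity part yields $-\Delta x\,\lambda\,(D_x^+D_x^- J_h^{n+1})_i$, symmetrized as before into the required form.

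There is no genuine obstacle here: the argument is a direct computation, and the only points requiring a little care are (i) invoking the unit-mass and odd-symmetry properties \eqref{hyp_chi_dis}--\eqref{sym_chi_dis} in the right order to eliminate the collision contributions, (ii) using the decomposition $v_j^2=(v_j^2-D_0^\Delta)+D_0^\Delta$ to bring $S_h$ out of the flux, and (iii) writing the discrete Laplacian stemming from the Lax-Friedrichs viscosity in the symmetrized form $\tfrac{1}{2}(D_x^+D_x^-+D_x^-D_x^+)$, which is consistent with the expressions used in Lemma \ref{lem_coercivite_micro_dis} and will be needed in the subsequent entropy estimates.
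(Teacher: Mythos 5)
Your proposal is correct and follows essentially the same route as the paper: subtract the two linearized schemes, use the flux definitions \eqref{flux_F}--\eqref{flux_G} together with the gradient identities \eqref{prop_gradients_dis} to rewrite the transport part as $v_j(D_x^c h_j)_i-\tfrac{\lambda\Delta x}{2}\bigl((D_x^+D_x^-+D_x^-D_x^+)h_j\bigr)_i$, then take the $\Delta v$ and $v_j\Delta v$ moments, with \eqref{hyp_chi_dis}, \eqref{sym_chi_dis} and the splitting $v_j^2=(v_j^2-D_0^\Delta)+D_0^\Delta$ handling the collision and $S_h$ terms exactly as in the paper's proof. The only cosmetic difference is that the paper writes the intermediate scheme \eqref{scheme_h_lin} for $h$ explicitly before taking moments, whereas you sum first and identify the structure afterwards.
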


\begin{proof}
Let us start by subtracting \eqref{scheme_g_lin} from \eqref{scheme_f_lin}. By linearity of the numerical scheme, we obtain
\begin{equation*}
    \begin{split}
    \frac{h_{ij}^{n+1}-h_{ij}^n}{\Delta t} &+\frac{1}{\Delta x\,\Delta v}\left(\mathcal{F}_{\iph,j}^{n+1}-\mathcal{F}_{\imh,j}^{n+1}-(\mathcal{G}_{\iph,j}^{n+1}-\mathcal{G}_{\imh,j}^{n+1})\right)\\
    &=\rhoinfs(g_{ij}^{n+1}-\chi_{1,j}\rho_{g,i}^{n+1})-(\rhoinfs)\inv (f_{ij}^{n+1}-\chi_{2,j}\rho_{f,i}^{n+1}).
\end{split}
\end{equation*}
Using the definition of the numerical fluxes \eqref{flux_F}--\eqref{flux_G} and the properties of the discrete gradients \eqref{prop_gradients_dis}, one gets
\begin{equation}\label{scheme_h_lin}
    \begin{split}
    \frac{h_{ij}^{n+1}-h_{ij}^n}{\Delta t} &+ v_j(D_x^c h_j^{n+1})_i - \frac{\lambda\Delta x}{2}\left((D_x^+D_x^-+D_x^-D_x^+)h_j^{n+1}\right)_i\\
    &\qquad=\rhoinfs(g_{ij}^{n+1}-\chi_{1,j}\rho_{g,i}^{n+1})-(\rhoinfs)\inv (f_{ij}^{n+1}-\chi_{2,j}\rho_{f,i}^{n+1}).
\end{split}
\end{equation}
Finally, the moment schemes are obtained by multiplying \eqref{scheme_h_lin} by $(\Delta v, v_j\Delta v)$, summing over $j\in\J$ and applying definitions \eqref{def_moments_discrets} of the discrete moments.
\end{proof}

Thanks to these three lemmas, we are now in position to establish the discrete counterpart of Proposition \ref{prop_hypoco_lin_cont}, namely the exponential decay to equilibrium for the discrete linearized problem.

\begin{Theorem}\label{theo_hypoco_dis_lin}
Assuming that \eqref{hyp_chi_dis} is fulfilled and that the number of points $N$ of the spatial discretization is odd, there exist constants $C\geq 1$ and $\kappa>0$ such that for all $\Delta t\leq \Delta t_{\max}$ and all initial data $F^0=(f_{ij}^0,g_{ij}^0)_{(i,j)\in\II\times\J}$ such that $\sum_{(i,j)\in\II\times\J}\Delta x\,\Delta v\, (f_{ij}^0-g_{ij}^0)=0$, the solution $F^n=(f_{ij}^n,g_{ij}^n)_{i\in\II,j\in\J}$ of \eqref{scheme_f_lin}--\eqref{scheme_g_lin} satisfies for all $n\geq 0$
\begin{equation}
    \|F^{n+1}\|_\Delta\leq C \, \|F^0\|_\Delta\,e^{-\kappa\,t^n}.
\end{equation}
The constants $C$ and $\kappa$ do not depend on the size of the discretization $\Delta$, and $\Delta t_{\max}$ can be arbitrarily chosen.  
\end{Theorem}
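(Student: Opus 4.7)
The plan is to mimic the proof of Proposition \ref{prop_hypoco_lin_cont} by constructing a discrete modified entropy adapted to the finite volume scheme. Define
\begin{equation*}
H_\delta^n \coloneqq \tfrac{1}{2}\|F^n\|_\Delta^2 + \delta\,\langle J_h^n, D_x^c \Phi^n\rangle_2,
\end{equation*}
where $\Phi^n=(\Phi_i^n)_{i\in\II}$ solves a zero-mean discrete Poisson problem $-\mathcal{L}_\Delta \Phi^n = u_h^n$ with $\mathcal{L}_\Delta = \tfrac{1}{2}(D_x^+D_x^- + D_x^-D_x^+)$, the symmetric discrete Laplacian naturally compatible with the viscous terms appearing in the moment schemes of Lemma \ref{lem_eq_moments_dis}. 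Using the discrete Poincaré inequality (Lemma \ref{lem_Poincare_dis}, where the assumption that $N$ is odd enters) together with the discrete integration by parts of Lemma \ref{ref_IPP_discret} and the moment identity \eqref{estim_uh_dis}, I first establish the discrete analogues of Lemmas \ref{lem_estim_phi_cont} and \ref{lem_norm_eq_cont}: the bounds $\|D_x^c\Phi^n\|_2\leq C_P C_u^*\|\Pi^\Delta F^n\|_\Delta$, $\|D_x^c(\Phi^{n+1}-\Phi^n)/\Delta t\|_2 \leq C_{J1}^*\|(I-\Pi^\Delta)F^{n+1}\|_\Delta$, and, for $\delta$ in some interval $(0,\delta_1)$, the norm equivalence $c_\delta \|F^n\|_\Delta^2 \leq H_\delta^n \leq C_\delta \|F^n\|_\Delta^2$ with constants independent of $\Delta$.

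The second step proves a discrete entropy dissipation estimate $H_\delta^{n+1}-H_\delta^n \leq -\Delta t\,K_\delta\,\|F^{n+1}\|_\Delta^2$. Writing
\begin{equation*}
H_\delta^{n+1} - H_\delta^n = T_1^n + \delta\bigl(T_2^n + T_3^n + T_4^n + T_5^n\bigr),
\end{equation*}
the microscopic dissipation term $T_1^n$ is directly controlled by Lemma \ref{lem_coercivite_micro_dis}. The terms $T_2^n$, $T_3^n$, $T_4^n$ arise by testing the $J_h$-scheme \eqref{scheme_Jh_lin} against $D_x^c\Phi^{n+1}$ and transferring discrete derivatives onto $\Phi^{n+1}$ via Lemma \ref{ref_IPP_discret}; combining Lemma \ref{lem_mom_estim_disc} with the discrete Poisson relation then yields $|T_2^n|+|T_4^n| \leq C\,\|(I-\Pi^\Delta)F^{n+1}\|_\Delta\,\|\Pi^\Delta F^{n+1}\|_\Delta$ and the key coercive contribution $T_3^n \leq -D_0^\Delta (C_u^*)^2 \|\Pi^\Delta F^{n+1}\|_\Delta^2$. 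For $T_5^n$, I rewrite the time increment $D_x^c(\Phi^{n+1}-\Phi^n)/\Delta t$ using the discrete continuity equation \eqref{scheme_uh_lin} and close the bound via \eqref{estim_Jh2_dis}. Choosing $\delta$ small enough so that the cross products are dominated by the coercive parts (discrete analogues of the thresholds $\delta_2,\delta_3$ in the continuous proof) yields $H_\delta^{n+1}-H_\delta^n\leq -\Delta t(K_\delta/C_\delta)\,H_\delta^{n+1}$; iteration then gives $H_\delta^n\leq H_\delta^0\,(1+\Delta t\,K_\delta/C_\delta)^{-n}$, and the equivalence concludes with $\kappa \geq K_\delta/(C_\delta + \Delta t_{\max}K_\delta)$ and $C=\sqrt{C_\delta/c_\delta}$.

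Two technical obstacles are specific to the discrete setting. First, the Lax-Friedrichs numerical viscosity contributes additional $-\tfrac{\Delta x\,\lambda}{2}(D_x^+D_x^-+D_x^-D_x^+)$ terms in both moment schemes of Lemma \ref{lem_eq_moments_dis}; these generate extra contributions in each $T_k^n$ that must be handled with \eqref{IPP2_decentre} and \eqref{estim_dxu_disc}, checking that they retain a favorable sign (as already observed in $A_2$ of Lemma \ref{lem_coercivite_micro_dis}) or can be absorbed into the coercive leading terms uniformly in $\Delta$. Second, the implicit-in-time discretization forces the telescoping identity $\tfrac{1}{2}(|a|^2-|b|^2)=a(a-b)-\tfrac{1}{2}|a-b|^2$ when differencing the cross term $\langle J_h^n,D_x^c\Phi^n\rangle_2$, which produces quadratic-in-$\Delta t$ remainders of the form $\Delta t^2\|\cdot\|_\Delta^2$; these must be absorbed into the $\Delta t\|\cdot\|_\Delta^2$ coercive terms, and this is the sole place where $\Delta t\leq\Delta t_{\max}$ intervenes, explaining why $\Delta t_{\max}$ can be chosen arbitrarily at the price of $\kappa$ depending on it. The remaining effort is bookkeeping: ensuring every constant $c_\delta$, $C_\delta$, $K_\delta$ depends only on the universal bounds in \eqref{hyp_chi_dis}, not on $\Delta$, which yields the announced uniform exponential decay.
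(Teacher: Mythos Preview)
Your proposal has a genuine gap in the handling of the time-discrete cross term. When you difference $\langle J_h^n,D_x^c\Phi^n\rangle_2$, the residual you call ``quadratic-in-$\Delta t$'' is \emph{not} $O(\Delta t^2)\|F\|_\Delta^2$: writing it as $\langle J_h^{n+1}-J_h^n,\,D_x^c(\Phi^{n+1}-\Phi^n)\rangle_2$, the factor $\|J_h^{n+1}-J_h^n\|_2$ is controlled via \eqref{scheme_Jh_lin} by $\Delta t\,\|D_x^cS_h^{n+1}\|_2+\cdots$, and \eqref{estim_dxu_disc} only gives $\|D_x^cS_h\|_2\le \|S_h\|_2/\Delta x$. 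Since $\Delta t/\Delta x=1/(2\lambda)$ is a fixed constant, $\|J_h^{n+1}-J_h^n\|_2$ is merely $O(1)$, and the residual is $O(\Delta t)\|F^{n+1}\|_\Delta^2$ --- the same order as the coercive terms. It therefore cannot be absorbed by taking $\Delta t$ small, and since the macroscopic coercivity carries a factor $\delta$ while this residual also carries $\delta$, you cannot kill it by shrinking $\delta$ either. Relatedly, your claimed bound $\|D_x^c(\Phi^{n+1}-\Phi^n)/\Delta t\|_2\le C_{J1}^*\|(I-\Pi^\Delta)F^{n+1}\|_\Delta$ is incorrect: the Lax--Friedrichs viscosity in the continuity scheme \eqref{scheme_uh_lin} adds an unavoidable $\|\Pi^\Delta F^{n+1}\|_\Delta$ contribution (cf.\ \eqref{estim_dtxphi_dis}).

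The paper's fix is to augment the entropy with a third ``memory'' term,
\[
H_\delta^\Delta[F^n]=\tfrac12\|F^n\|_\Delta^2+\delta\langle J_h^n,D_x^c\Phi^n\rangle_2+\frac{\delta}{2\Delta t}\|D_x^c\Phi^n-D_x^c\Phi^{n-1}\|_2^2,
\]
and to use the Poisson equation with $D_x^cD_x^c$ rather than your $\mathcal L_\Delta$. The residual $T_{13}^n$ is then rewritten, via \eqref{scheme_uh_lin} and \eqref{eq_aux_dis}, as (a nonpositive Lax--Friedrichs piece plus) $-\tfrac{1}{\Delta t}\langle a,\,a-b\rangle_2$ with $a=D_x^c\Phi^{n+1}-D_x^c\Phi^n$ and $b=D_x^c\Phi^n-D_x^c\Phi^{n-1}$; combined with the time-difference of the new third term this gives $-a(a-b)+\tfrac12(a^2-b^2)=-\tfrac12(a-b)^2\le 0$, which disposes of the residual with the correct sign rather than by smallness. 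The role of $\Delta t\le\Delta t_{\max}$ in the paper is only to bound this third term in the norm-equivalence Lemma~\ref{lem_eqnorm_dis}, not to absorb remainders. This three-term functional is the missing idea in your outline.
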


To prove this result, let us introduce the discrete modified entropy functional 
\begin{equation}\label{def_entropy_dis}
    H_\delta^\Delta[F^n]\coloneqq\frac{1}{2}\|F^n\|_\Delta^2+\delta\langle J_h^n,D_x^c\Phi^n\rangle_2+\frac{\delta}{2\,\Delta t}\sum_{i\in\II}\Delta x\left((D_x^c\Phi^n)_i-(D_x^c\Phi^{n-1})_i\right)^2,
\end{equation}
where $\delta>0$ will be determined later and $(\Phi_i^n)_{i\in\II}$ is the solution to the following discrete Poisson equation:
\begin{equation}\label{eq_aux_dis}
    (D_x^cD_x^c\Phi^n)_i=-u_{h,i}^n \quad \forall i\in\II, \qquad \sum_{i\in\II}\Delta x\,\Phi_i^n=0.
\end{equation}

For an odd number of points $N$ of the spatial discretization, existence and uniqueness of $(\Phi_i^n)_{i\in\II}$ satisfying \eqref{eq_aux_dis} is obtained (see \cite{BessemoulinHerdaRey2020}).

Let us now derive some discrete estimates on $(\Phi_i^n)_{i\in\II}$.

\begin{Lemma}\label{lem_estim_phi_dis}
Under the assumptions of Theorem \ref{theo_hypoco_dis_lin}, one has for all $n\geq 0$
\begin{align}
    & \|D_x^c\Phi^n\|_2\leq C_P\,C_u^*\,\|\Pi^\Delta F^n\|_\Delta, \label{estim_dxphi_dis}\\
    & \|D_x^c\Phi^{n+1}-D_x^c\Phi^n\|_2\leq \Delta t\,\|J_h^{n+1}\|_2+2\,\Delta t\,\lambda\,C_u^*\,\|\Pi^\Delta F^{n+1}\|_\Delta, \label{estim_dtxphi_dis}
\end{align}
where $C_P$ is the discrete Poincaré constant of Lemma \ref{lem_Poincare_dis}.
\end{Lemma}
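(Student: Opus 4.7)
The plan is to mimic the continuous proof of Lemma \ref{lem_estim_phi_cont}, making adjustments to handle the numerical dissipation introduced by the Lax--Friedrichs flux. Throughout, I would work directly with the discrete Poisson equation \eqref{eq_aux_dis}, the discrete integration by parts identities of Lemma \ref{ref_IPP_discret}, and the discrete Poincaré inequality of Lemma \ref{lem_Poincare_dis}, which applies because $\sum_i \Delta x\,\Phi_i^n = 0$ by construction.

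For \eqref{estim_dxphi_dis}, I would multiply \eqref{eq_aux_dis} by $\Phi_i^n$, sum over $i\in\II$, and apply \eqref{IPP_centre} to write
\begin{equation*}
\|D_x^c\Phi^n\|_2^2 = -\langle D_x^cD_x^c\Phi^n, \Phi^n\rangle_2 = \langle u_h^n,\Phi^n\rangle_2.
\end{equation*}
Cauchy--Schwarz, the discrete Poincaré inequality and the identity \eqref{estim_uh_dis} then give
\begin{equation*}
\|D_x^c\Phi^n\|_2^2 \leq \|u_h^n\|_2\,\|\Phi^n\|_2 \leq C_P\,C_u^*\,\|\Pi^\Delta F^n\|_\Delta\,\|D_x^c\Phi^n\|_2,
\end{equation*}
and dividing by $\|D_x^c\Phi^n\|_2$ finishes this estimate.

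For \eqref{estim_dtxphi_dis}, the continuous argument differentiates the Poisson equation in time and uses the continuity equation; the discrete analogue is to take the difference of \eqref{eq_aux_dis} at steps $n+1$ and $n$ and plug in the discrete continuity equation \eqref{scheme_uh_lin} of Lemma \ref{lem_eq_moments_dis}. Setting $\Psi = \Phi^{n+1} - \Phi^n$, this gives
\begin{equation*}
D_x^cD_x^c\Psi = -(u_h^{n+1}-u_h^n) = \Delta t\,D_x^cJ_h^{n+1} - \frac{\Delta t\,\Delta x\,\lambda}{2}\left(D_x^+D_x^- + D_x^-D_x^+\right)u_h^{n+1}.
\end{equation*}
Testing this equation against $\Psi$, I would use \eqref{IPP_centre} on the left and on the first term on the right, and \eqref{IPP2_decentre} on the second term, to obtain
\begin{equation*}
\|D_x^c\Psi\|_2^2 = \Delta t\,\langle J_h^{n+1}, D_x^c\Psi\rangle_2 - 2\,\Delta t\,\Delta x\,\lambda\,\langle D_x^cu_h^{n+1}, D_x^c\Psi\rangle_2.
\end{equation*}
After applying Cauchy--Schwarz and dividing by $\|D_x^c\Psi\|_2$, the bound reduces to controlling $\Delta x\,\|D_x^cu_h^{n+1}\|_2$.

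The only genuine new difficulty compared with the continuous case is this extra term coming from the Lax--Friedrichs numerical viscosity, which has no continuous counterpart. It is precisely the inequality \eqref{estim_dxu_disc} that absorbs it: $\Delta x\,\|D_x^cu_h^{n+1}\|_2 \leq \|u_h^{n+1}\|_2$, and then \eqref{estim_uh_dis} converts this into $C_u^*\,\|\Pi^\Delta F^{n+1}\|_\Delta$. Combining everything yields exactly \eqref{estim_dtxphi_dis}. I expect the bookkeeping with the signs of the discrete integration by parts (especially the factor $-4$ in \eqref{IPP2_decentre}) to be the point most likely to introduce an error, but no delicate estimate is needed beyond what is already available.
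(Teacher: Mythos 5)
Your proposal is correct and follows essentially the same route as the paper's proof: the first bound via testing the discrete Poisson equation with $\Phi^n$, Cauchy--Schwarz, discrete Poincaré and \eqref{estim_uh_dis}, and the second by differencing \eqref{eq_aux_dis} in time, inserting the continuity scheme \eqref{scheme_uh_lin}, testing against $\Phi^{n+1}-\Phi^n$, and integrating by parts with \eqref{IPP_centre} and \eqref{IPP2_decentre} before absorbing the Lax--Friedrichs viscosity term through \eqref{estim_dxu_disc} and \eqref{estim_uh_dis}. The sign bookkeeping in your displayed identity for $\|D_x^c\Psi\|_2^2$ is consistent with the paper's computation, so no gap remains.
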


\begin{proof}
To show \eqref{estim_dxphi_dis}, we multiply \eqref{eq_aux_dis} by $\Phi_i^n\Delta x$ and sum over $i\in\II$. Then, discrete integration by parts together with Lemmas \ref{lem_Poincare_dis} and \ref{lem_mom_estim_disc} are used to obtain:
\begin{equation*}
    \|D_x^c\Phi^n\|_2^2 = \langle -(D_x^cD_x^c\Phi^{n}),\Phi^n \rangle_2=\langle u_h^{n},\Phi^n \rangle_2\leq \|u_h^n\|_2\|\Phi_n\|_2\leq C_P C_u^*\|\Pi^\Delta F^n\|_\Delta\|D_x^c\Phi_n\|_2.
\end{equation*}

Let us now prove \eqref{estim_dtxphi_dis}. The first step is to subtract \eqref{eq_aux_dis} at time $t^n$ from \eqref{eq_aux_dis} at time $t^{n+1}$, multiply by $(\Phi_i^{n+1}-\Phi_i^{n})\Delta x$ and sum over $i\in\II$:
\begin{equation*}
    \langle (D_x^cD_x^c \Phi^{n+1}- D_x^cD_x^c \Phi^n),\Phi^{n+1}-\Phi^n\rangle_2 = -\langle u_h^{n+1}-u_h^n, \Phi^{n+1}-\Phi^n\rangle_2.
\end{equation*}
After an integration by parts on the left hand side and plugging the continuity scheme \eqref{scheme_uh_lin} in the right hand side, one has
\begin{align*}
    \|D_x^c\Phi^{n+1}-D_x^c\Phi^n\|_2^2 &= -\Delta t \langle D_x^cJ_h^{n+1}, \Phi^{n+1}-\Phi^n\rangle_2 \\
    &\quad+ \frac{\lambda\Delta t\Delta x}{2}\langle (D_x^+D_x^-+D_x^-D_x^+)u_h^{n+1},\Phi^{n+1}-\Phi^n\rangle_2.
\end{align*}
Applying integrations by parts \eqref{IPP_centre} and \eqref{IPP2_decentre} on the right hand side together with Cauchy-Schwarz inequality, one obtains
\begin{align*}
    \|D_x^c\Phi^{n+1}-D_x^c\Phi^n\|_2^2 &= \Delta t \langle J_h^{n+1}, D_x^c\Phi^{n+1}-D_x^c\Phi^n\rangle_2 \\
    &\quad- 2\lambda\Delta t\Delta x\langle D_x^cu_h^{n+1},D_x^c\Phi^{n+1}-D_x^c\Phi^n\rangle_2\\
    &\leq\Delta t(\|J_h^{n+1}\|_2+2\lambda\Delta x\|D_x^cu_h^{n+1}\|_2)\|D_x^c\Phi^{n+1}-D_x^c\Phi^n\|_2.
\end{align*}
One can conclude by dividing both sides by $\|D_x^c\Phi^{n+1}-D_x^c\Phi^n\|_2$ and using estimates \eqref{estim_dxu_disc} and \eqref{estim_uh_dis}.
\end{proof}

In the following lemma, we establish that for $\delta >0$ small enough, the modified entropy functional is an equivalent $\|\cdot\|_\Delta$ norm.

\begin{Lemma}\label{lem_eqnorm_dis}
Under the assumptions of Theorem \ref{theo_hypoco_dis_lin} and assuming that $\Delta t\leq \Delta t_{\max}$, there exists $\alpha_1^*>0$ such that for all $n\geq 1$
\begin{equation*}
\begin{split}
    \left(\frac{1}{2}-\delta\,C_{J1}^*\,C_u^*\,C_P \right) \|F\|_\Delta^2&\leq H_\delta^\Delta[F^n]\\
    &\leq\left(\frac{1}{2}+\delta(\,C_{J1}^*\,C_u^*\,C_P+\,\alpha_1^*\,\Delta t_{\max}) \right)\|F^n\|_\Delta^2,
\end{split}
\end{equation*}
where $\alpha_1^*$ depends on $D_1^\Delta$, $D_2^\Delta$ and $\rhoinfs$.
\end{Lemma}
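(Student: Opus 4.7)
The plan is to bound the two correction terms in $H_\delta^\Delta[F^n]$ beyond the leading $\tfrac{1}{2}\|F^n\|_\Delta^2$ directly, using the moment estimates of Lemma \ref{lem_mom_estim_disc} together with the auxiliary estimates \eqref{estim_dxphi_dis}--\eqref{estim_dtxphi_dis} of Lemma \ref{lem_estim_phi_dis}. The cross term contributes to both the upper and lower bounds (it can have either sign), while the quadratic time-difference term is non-negative and therefore affects only the upper bound.

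For the cross term, I would apply Cauchy--Schwarz to write $|\langle J_h^n, D_x^c\Phi^n\rangle_2|\leq \|J_h^n\|_2\,\|D_x^c\Phi^n\|_2$, then use \eqref{estim_Jh1_dis} to get $\|J_h^n\|_2\leq C_{J1}^*\|F^n\|_\Delta$ and combine \eqref{estim_dxphi_dis} with the orthogonality of $\Pi^\Delta$ (which implies $\|\Pi^\Delta F^n\|_\Delta\leq \|F^n\|_\Delta$) to get $\|D_x^c\Phi^n\|_2\leq C_P C_u^*\|F^n\|_\Delta$. This yields $|\langle J_h^n, D_x^c\Phi^n\rangle_2|\leq C_{J1}^* C_u^* C_P \|F^n\|_\Delta^2$, which produces exactly the $\delta C_{J1}^* C_u^* C_P\|F^n\|_\Delta^2$ term in both bounds of the lemma.

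For the time-difference term, I would start from \eqref{estim_dtxphi_dis}, square both sides and use $(a+b)^2\leq 2a^2+2b^2$ to obtain
\begin{equation*}
\|D_x^c\Phi^n - D_x^c\Phi^{n-1}\|_2^2 \leq 2\Delta t^2 \|J_h^n\|_2^2 + 8\Delta t^2 \lambda^2 (C_u^*)^2 \|\Pi^\Delta F^n\|_\Delta^2.
\end{equation*}
Invoking \eqref{estim_Jh1_dis} and $\|\Pi^\Delta F^n\|_\Delta\leq \|F^n\|_\Delta$ once more gives the bound $\|D_x^c\Phi^n - D_x^c\Phi^{n-1}\|_2^2\leq 2\Delta t^2 \bigl[(C_{J1}^*)^2 + 4\lambda^2(C_u^*)^2\bigr]\|F^n\|_\Delta^2$. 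Dividing by $2\Delta t$ and setting $\alpha_1^* := (C_{J1}^*)^2 + 4\lambda^2(C_u^*)^2$ (which depends only on $D_1^\Delta, D_2^\Delta, \rhoinfs$ and the fixed constant $\lambda$), the contribution of this term to $H_\delta^\Delta[F^n]$ is bounded above by $\delta\,\Delta t\,\alpha_1^*\|F^n\|_\Delta^2\leq \delta\,\Delta t_{\max}\,\alpha_1^*\|F^n\|_\Delta^2$, producing the remaining term in the upper bound. For the lower bound, I simply discard this non-negative term and keep only the cross-term correction.

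This argument is almost entirely routine once the building blocks from Lemmas \ref{lem_mom_estim_disc} and \ref{lem_estim_phi_dis} are in hand; there is no genuine obstacle. The only point worth care is tracking the constants carefully so that $\alpha_1^*$ ends up independent of the discretization parameters $\Delta x$, $\Delta v$, $\Delta t$ (beyond the fixed ratio $\lambda$), which is what ensures the equivalence constants of the norm do not degrade as the mesh is refined. The existence of a threshold $\delta_1^*>0$ below which the lower coefficient $\tfrac{1}{2}-\delta C_{J1}^* C_u^* C_P$ is strictly positive is then immediate, namely any $\delta<(2 C_{J1}^* C_u^* C_P)^{-1}$ works.
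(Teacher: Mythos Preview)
Your proof is correct and follows essentially the same approach as the paper: Cauchy--Schwarz together with \eqref{estim_Jh1_dis} and \eqref{estim_dxphi_dis} for the cross term, then squaring \eqref{estim_dtxphi_dis} and applying Young's inequality $(a+b)^2\le 2a^2+2b^2$ for the time-difference term, arriving at the same constant $\alpha_1^*=(C_{J1}^*)^2+4\lambda^2(C_u^*)^2$ and the same threshold $\delta<(2C_{J1}^*C_u^*C_P)^{-1}$.
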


\begin{proof}
Let us start by estimating the second term of the  discrete modified entropy \eqref{def_entropy_dis}. A Cauchy-Schwarz inequality followed by \eqref{estim_Jh1_dis} and \eqref{estim_dxphi_dis} yield
\begin{align*}
    |\langle J_h^{n},D_x^c\Phi^{n}\rangle_2|&\leq\|J_h^n\|_2\|D_x^c \Phi^n\|_2\\
    &\leq C_{J1}^* \, C_P \, C_u^* \|F^n\|_\Delta\|\Pi^\Delta F^n\|_\Delta\\
    &\leq C_{J1}^* \, C_P \, C_u^* \|F^n\|_\Delta^2.
\end{align*}
The last term of \eqref{def_entropy_dis} shall be estimated using \eqref{estim_dtxphi_dis} and \eqref{estim_Jh1_dis}, together with Young inequality:
\begin{align*}
    \sum_{i\in\II}\frac{\Delta x}{2\Delta t} & \left((D_x^c\Phi^n)_i - (D_x^c\Phi^{n-1})_i\right)^2 \leq\frac{\Delta t}{2}\left(\|J_h^{n}\|_2+2\,\lambda\,C_u^*\,\|\Pi^\Delta F^{n}\|_\Delta\right)^2\\
    &\leq \Delta t\left((C_{J1}^*)^2+4(\lambda C_u^*)^2\right)\|F^{n}\|_\Delta^2.
\end{align*}
Setting 
\begin{equation*}
    \alpha_1^* = (C_{J1}^*)^2+ 4(\lambda C_u^*)^2 ,
\end{equation*}
one obtains
\begin{equation*}
    0\leq\sum_{i\in\II}\frac{\Delta x}{2\Delta t}\left((D_x^c\Phi^n)_i - (D_x^c\Phi^{n-1})_i\right)^2 \leq \alpha_1^*\Delta t\|F^n\|_\Delta^2.
\end{equation*}
Consequently, the modified entropy is bounded from below by a positive quantity as long as $\delta< \left(2C_{J1}^* \, C_u^* \, C_P\right)\inv\eqqcolon\delta_3$. Finally, the upper bound is obtained using that $\Delta t \leq \Delta t_{max}$.
\end{proof}

\begin{Proposition}\label{prop_evol_entropy_dis}
Under the assumptions of Theorem \ref{theo_hypoco_dis_lin}, there is $\delta_2>0$ such that for all $\Delta t\leq \Delta t_{\max}$ and $\delta\leq \delta_2$, there exists $K_{\delta}>0$ such that
\begin{equation}
    H_\delta^\Delta[F^{n+1}]-H_\delta^\Delta[F^n]\leq -\Delta t\,K_\delta\,\|F^{n+1}\|_\Delta^2.
\end{equation}
\end{Proposition}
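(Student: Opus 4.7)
The strategy mirrors the continuous proof of Proposition \ref{prop_hypoco_lin_cont}, now in a fully discrete setting. I would write
\[
H_\delta^\Delta[F^{n+1}]-H_\delta^\Delta[F^n] = T_1^n + \delta\,T_2^n + \frac{\delta}{2\Delta t}\,T_3^n,
\]
with
\[
T_1^n = \tfrac{1}{2}\bigl(\|F^{n+1}\|_\Delta^2-\|F^n\|_\Delta^2\bigr),\qquad T_2^n = \langle J_h^{n+1},D_x^c\Phi^{n+1}\rangle_2-\langle J_h^n,D_x^c\Phi^n\rangle_2,
\]
and $T_3^n$ the telescoping difference of squares. The term $T_1^n$ is handled directly by Lemma \ref{lem_coercivite_micro_dis}, which provides the coercive contribution $-\Delta t\,C_{mc}^*\,\|(I-\Pi^\Delta)F^{n+1}\|_\Delta^2$.

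For $T_2^n$, I would use the splitting
\[
T_2^n = \langle J_h^{n+1}-J_h^n,D_x^c\Phi^{n+1}\rangle_2 + \langle J_h^n,D_x^c\Phi^{n+1}-D_x^c\Phi^n\rangle_2,
\]
and then substitute the moment scheme \eqref{scheme_Jh_lin} into the first bracket. This produces four contributions analogous to the $T_2,T_3,T_4,T_5$ of the continuous proof: an $S_h$--term (integration by parts \eqref{IPP_centre}, then control by $C_S^*C_u^*\|(I-\Pi^\Delta)F^{n+1}\|_\Delta\|\Pi^\Delta F^{n+1}\|_\Delta$ via \eqref{estim_Sh_dis} and the discrete Poisson equation \eqref{eq_aux_dis}); a $u_h$--term producing the key negative contribution $-\Delta t\,D_0^\Delta\,(C_u^*)^2\|\Pi^\Delta F^{n+1}\|_\Delta^2$ after inserting \eqref{eq_aux_dis}; a $(\rhoinfs)^{-1}J_f-\rhoinfs J_g$ term controlled by \eqref{estim_JfJg_dis} and \eqref{estim_dxphi_dis}; and a new numerical diffusion contribution $\frac{\Delta t\Delta x\,\lambda}{2}\langle(D_x^+D_x^-+D_x^-D_x^+)J_h^{n+1},D_x^c\Phi^{n+1}\rangle_2$ coming from Lax-Friedrichs, which via \eqref{IPP2_decentre} rewrites as $-2\Delta t\Delta x\,\lambda\langle D_x^c J_h^{n+1},D_x^c D_x^c\Phi^{n+1}\rangle_2$ and is bounded using \eqref{estim_dxu_disc}, \eqref{estim_Jh2_dis}. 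The remaining piece $\langle J_h^n,D_x^c\Phi^{n+1}-D_x^c\Phi^n\rangle_2$ is treated with Cauchy–Schwarz, \eqref{estim_Jh2_dis}, and \eqref{estim_dtxphi_dis}, producing $\|(I-\Pi^\Delta)F^{n+1}\|_\Delta^2$ and $\|\Pi^\Delta F^{n+1}\|_\Delta^2$ contributions of order $\Delta t$.

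For $T_3^n$, the telescoping squared-difference is exactly what is needed to absorb the positive contribution of $\frac{1}{2\Delta t}\|D_x^c\Phi^{n+1}-D_x^c\Phi^n\|_2^2$ coming from the previous step; in the single-step balance, this surviving term is controlled using the sharpened bound \eqref{estim_dtxphi_dis}, giving a $\Delta t$-small contribution proportional to $\|F^{n+1}\|_\Delta^2$. Assembling all bounds and repeatedly applying Young's inequality to the cross terms of the form $\|(I-\Pi^\Delta)F^{n+1}\|_\Delta\|\Pi^\Delta F^{n+1}\|_\Delta$, one arrives at an estimate of the form
\[
H_\delta^\Delta[F^{n+1}]-H_\delta^\Delta[F^n]+\Delta t\bigl(C_{mc}^*-\delta\,\alpha\bigr)\|(I-\Pi^\Delta)F^{n+1}\|_\Delta^2+\Delta t\,\delta\,\beta\,\|\Pi^\Delta F^{n+1}\|_\Delta^2\leq 0,
\]
for explicit constants $\alpha,\beta>0$ depending only on the universal bounds in \eqref{hyp_chi_dis} and on $\lambda$, $\rhoinfs$, $C_P$, $\Delta t_{\max}$. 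Choosing $\delta_2>0$ such that both coefficients are positive and orthogonality $\|F^{n+1}\|_\Delta^2=\|\Pi^\Delta F^{n+1}\|_\Delta^2+\|(I-\Pi^\Delta)F^{n+1}\|_\Delta^2$ yields the claim with $K_\delta=\min(C_{mc}^*-\delta\alpha,\,\delta\beta)$.

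The main obstacle is ensuring that the two new features absent from the continuous proof, namely the Lax-Friedrichs numerical diffusion appearing in \eqref{scheme_Jh_lin} and the auxiliary telescoping term in \eqref{def_entropy_dis}, are controlled uniformly in $\Delta$ under only the fixed condition $\lambda=\Delta x/(2\Delta t)$ and $\Delta t\leq\Delta t_{\max}$. The upgraded bound \eqref{estim_dtxphi_dis}, which is precisely what replaces the continuous estimate \eqref{estim_dtxphi_cont} with an additional $\lambda C_u^*\|\Pi^\Delta F\|_\Delta$ contribution, is the essential ingredient allowing these new terms to be absorbed into the coercive part for $\delta$ sufficiently small.
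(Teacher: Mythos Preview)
Your overall strategy is right, but there is a genuine gap in how you handle the cross term and the telescoping contribution, and it is precisely the place where the discrete proof departs from the continuous one.

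With your two-term splitting
\[
T_2^n = \langle J_h^{n+1}-J_h^n,D_x^c\Phi^{n+1}\rangle_2 + \langle J_h^{n},D_x^c\Phi^{n+1}-D_x^c\Phi^n\rangle_2,
\]
the second piece involves $J_h^n$, and \eqref{estim_Jh2_dis} only gives $\|J_h^n\|_2\leq C_{J1}^*\|(I-\Pi^\Delta)F^{n}\|_\Delta$, which is at the \emph{old} time level. You then claim this produces ``$\|(I-\Pi^\Delta)F^{n+1}\|_\Delta^2$ and $\|\Pi^\Delta F^{n+1}\|_\Delta^2$ contributions'', but it does not: you are left with a term of the form $\Delta t\,\delta\,C\,\|(I-\Pi^\Delta)F^{n}\|_\Delta\,\|F^{n+1}\|_\Delta$ which cannot be absorbed into the dissipation at level $n+1$. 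Likewise, your description of $T_3^n$ as ``absorbing'' a $\tfrac{1}{2\Delta t}\|D_x^c\Phi^{n+1}-D_x^c\Phi^n\|_2^2$ contribution does not match any term your computation actually produces, and simply dropping the negative part of $T_3^n$ and bounding the positive part via \eqref{estim_dtxphi_dis} makes the telescoping term pointless.

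The paper's proof resolves this by a \emph{three}-term splitting:
\[
T_1^n=\langle J_h^{n+1}-J_h^n,D_x^c\Phi^{n+1}\rangle_2+\langle J_h^{n+1},D_x^c\Phi^{n+1}-D_x^c\Phi^{n}\rangle_2+T_{13}^n,
\]
where the residual $T_{13}^n=\langle J_h^{n}-J_h^{n+1},D_x^c\Phi^{n+1}-D_x^c\Phi^{n}\rangle_2$ is \emph{not} estimated. Instead, one substitutes \eqref{scheme_uh_lin} and \eqref{eq_aux_dis} into $T_{13}^n$ and uses the algebraic identity $-a(a-b)+\tfrac12(a^2-b^2)=-\tfrac12(a-b)^2$ with $a=(D_x^c\Phi^{n+1})_i-(D_x^c\Phi^{n})_i$, $b=(D_x^c\Phi^{n})_i-(D_x^c\Phi^{n-1})_i$, to show that $T_{13}^n+T_2^n\leq 0$ exactly. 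This is the reason the telescoping term was inserted into $H_\delta^\Delta$ in the first place. With this cancellation, both remaining pieces $T_{11}^n,T_{12}^n$ carry only $F^{n+1}$ quantities, and the rest of the argument proceeds as you outline.
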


\begin{proof}
Taking the difference between the modified entropy at time $t^{n+1}$ and $t^n$ we get
\begin{equation}\label{eq_dtH_disc}
    H_\delta[F^{n+1}] - H_\delta[F^{n}] = \frac{1}{2}\left(\|F^{n+1}\|_\Delta^2 -\|F^{n}\|_\Delta^2\right) + \delta T_1^n + \delta T_2^n,
\end{equation}
where
\begin{align*}
    &T_1^n = \sum_{i\in\II} \left(J_{h,i}^{n+1}(D_x^c\Phi^{n+1})_i- J_{h,i}^{n}(D_x^c\Phi^{n})_i\right) \Delta x,\\
    &T_2^n = \sum_{i\in\II}\frac{\Delta x}{2\Delta t}\left[\left((D_x^c\Phi^{n+1})_i - (D_x^c\Phi^{n})_i\right)^2 - \left((D_x^c\Phi^n)_i - (D_x^c\Phi^{n-1})_i\right)^2\right].
\end{align*}
We already showed through Lemma \ref{lem_coercivite_micro_dis} that
\begin{equation*}
    \frac{1}{2}\left(\|F^{n+1}\|_\Delta^2 -\|F^{n}\|_\Delta^2\right) \leq -\Delta t\,C_{mc}^*\|(I-\Pi^\Delta)F^n\|_\Delta^2.
\end{equation*}
It remains to deals with the last two terms. First of all, let us remark that $T_1^n=T_{11}^n+T_{12}^n+T_{13}^n$, with
\begin{align*}
    &T_{11}^n = \langle J_h^{n+1}-J_h^{n}, D_x^c\Phi^{n+1} \rangle_2, \\
    &T_{12}^n = \langle J_h^{n+1}, D_x^c\Phi^{n+1}-D_x^c\Phi^{n}\rangle_2, \\
    &T_{13}^n = \sum_{i\in\II}\Delta x \left[J_{h,i}^{n}(D_x^c\Phi^{n+1})_i - J_{h,i}^{n+1}(D_x^c\Phi^{n+1})_i + J_{h,i}^{n+1}(D_x^c\Phi^{n})_i -J_{h,i}^{n}(D_x^c\Phi^{n})_i \right].
\end{align*}
It is worth noticing that the first two terms are discrete equivalent of 
\begin{equation*}
    \left(\dt\langle J_h, \dx\Phi \rangle_{L^2(\TT)} + \langle J_h, \dtx\Phi\rangle_{L^2(\TT)}\right)\Delta t
\end{equation*}
and will therefore be treated in the same manner as their continuous counterparts. Let us first deal with the residual term $T_{13}^n$. After reorganizing the terms and integrating by parts, it becomes
\begin{align*}
    T_{13}^n &= \sum_{i\in\II}\Delta x (J_{h,i}^{n}-J_{h,i}^{n+1})\left((D_x^c\Phi^{n+1})_i-(D_x^c\Phi^{n})_i\right)\\
    &= \sum_{i\in\II}\Delta x \left((D_x^cJ_{h}^{n+1})_i-(D_x^cJ_{h}^{n})_i\right)(\Phi^{n+1}_i-\Phi^{n}_i).
\end{align*}
The next step is to replace the discrete space derivatives of $J_h$ using the continuity scheme \eqref{scheme_uh_lin}, followed by several uses of the auxiliary scheme \eqref{eq_aux_dis} and integrations by parts:
\begin{align*}
    T_{13}^n &= -\frac{1}{\Delta t}\sum_{i\in\II}\Delta x (u_{h,i}^{n+1}-2u_{h,i}^{n}+u_{h,i}^{n-1})(\Phi^{n+1}_i-\Phi^{n}_i)\\
    &\quad +\frac{\lambda\Delta x}{2}\sum_{i\in\II}\Delta x \left[\left((D_x^+D_x^- + D_x^-D_x^+)u_{h}^{n+1}\right)_i-\left((D_x^+D_x^- + D_x^-D_x^+)u_{h}^{n}\right)_i\right](\Phi^{n+1}_i-\Phi^{n}_i)\\
    &= \frac{1}{\Delta t}\sum_{i\in\II}\Delta x \left((D_x^cD_x^c\Phi^{n+1})_i-2(D_x^cD_x^c\Phi^{n})_i+(D_x^cD_x^c\Phi^{n-1})_i\right)(\Phi^{n+1}_i-\Phi^{n}_i)\\
    &\quad -2\lambda\Delta x\sum_{i\in\II}\Delta x \left((D_x^c u_{h}^{n+1})_i - (D_x^c u_{h}^{n})_i\right)\left((D_x^c\Phi^{n+1})_i-(D_x^c\Phi^{n})_i\right)\\
    &= -\frac{1}{\Delta t}\sum_{i\in\II}\Delta x \left( (D_x^c\Phi^{n+1})_i -2(D_x^c\Phi^{n})_i+ (D_x^c\Phi^{n-1})_i\right)\left((D_x^c\Phi^{n+1})_i-(D_x^c\Phi^{n})_i\right)\\
    &\quad -2\lambda\Delta x\sum_{i\in\II}\Delta x (u_{h,i}^{n+1}-u_{h,i}^{n})^2.\\
\end{align*}
The second term is clearly nonpositive. The first term is combined with $T_2^n$ using the identity $-a(a-b) + (a^2-b^2)/2 = -(a-b)^2/2$ with $a=(D_x^c\Phi^{n+1})_i-(D_x^c\Phi^{n})_i$ and $b=(D_x^c\Phi^{n})_i-(D_x^c\Phi^{n-1})_i$, therefore yielding $T_2^n + T_{13}^n \leq 0$, and finally
\begin{equation}\label{eq_dtH_disc2}
    H_\delta[F^{n+1}] - H_\delta[F^{n}] \leq \frac{1}{2}\left(\|F^{n+1}\|_\Delta^2 -\|F^{n}\|_\Delta^2\right) + \delta T_{11}^n + \delta T_{12}^n.
\end{equation}

The rest of the proof follows the same steps as in the continuous setting. The term $T_{11}^n$ can be expanded using scheme \eqref{scheme_Jh_lin} so that 
\begin{equation*}
    T_{11}^n = T_{111}^n + T_{112}^n + T_{113}^n + T_{114}^n,
\end{equation*}
where
\begin{align*}
    T_{111}^n &= -\Delta t\langle D_x^cS_h^{n+1}, D_x^c\Phi^{n+1} \rangle_2,\\
    T_{112}^n &= -\Delta tD_0\langle D_x^cu_h^{n+1}, D_x^c\Phi^{n+1} \rangle_2,\\
    T_{113}^n &= -\Delta t\langle (\rhoinfs)\inv J_f^{n+1}-\rhoinfs J_g^{n+1}, D_x^c\Phi^{n+1} \rangle_2,\\
    T_{114}^n &= \frac{\lambda\Delta t\Delta x}{2}\langle (D_x^+D_x^- + D_x^-D_x^+)J_h^{n+1},D_x^c\Phi^{n+1}\rangle_2.
\end{align*}
Thanks to Lemmas \ref{lem_mom_estim_disc} and \ref{lem_estim_phi_dis} we can estimate $T_{11k}$, $k=1,\dots,4$. After an integration by parts, using the auxiliary scheme \eqref{eq_aux_dis} and applying a Cauchy-Schwarz inequality, one has
\begin{align}
    \left|T_{111}^n\right| = \Delta t\left|\langle S_h^{n+1}, u_h^{n+1} \rangle_2\right|&\leq\Delta t \|S_h^{n+1}\|_2 \|u_h^{n+1}\|_2 \nonumber\\
    &\leq \Delta t C_{S}^* C_u^*\|(I-\Pi^\Delta)F^{n+1}\|_\Delta\|\Pi^\Delta F^{n+1}\|_\Delta.\label{estim_T111}
\end{align}
The second term is treated similarly and actually provides macroscopic coercivity:
\begin{align}
    T_{112}^n = -\Delta tD_0^\Delta\langle u_h^{n+1}, u_h^{n+1} \rangle_2&=-\Delta t D_0^\Delta\|u_h^{n+1}\|_2^2\leq -\Delta t \underline{D_0}(C_u^*)^2\|\Pi^\Delta F^{n+1}\|_\Delta^2.\label{estim_T112}
\end{align}
The next estimate follows directly from a Cauchy-Schwarz inequality:
\begin{align}
    \left|T_{113}^n\right| &= \Delta t\left|\langle  (\rhoinfs)\inv J_f^{n+1}-\rhoinfs J_g^{n+1}, D_x^c\Phi^{n+1}\rangle_2\right|\nonumber\\
    &\leq \Delta t \|(\rhoinfs)\inv J_f^{n+1}-\rhoinfs J_g^{n+1}\|_2 \|D_x^c\Phi^{n+1}\|_2\nonumber\\
    &\leq \Delta t C_{J2}^* C_P C_u^*\|(I-\Pi^\Delta)F^{n+1}\|_\Delta\|\Pi^\Delta F^{n+1}\|_\Delta.\label{estim_T113}
\end{align}
The estimation of $T_{114}^n$ is obtained through several integrations by parts, a Cauchy-Schwarz inequality and \eqref{estim_dxu_disc}:
\begin{align}
    \left|T_{114}^n\right| &= 2\lambda\Delta t\Delta x\left|\langle D_x^cJ_h^{n+1},D_x^cD_x^c\Phi^{n+1}\rangle_2\right|\nonumber\\
    &= 2\lambda\Delta t\Delta x\left|\langle D_x^cJ_h^{n+1},u_h^{n+1}\rangle_2\right|\nonumber\\
    &\leq 2\lambda\Delta t\Delta x \|J_h^{n+1}\|_2 \|D_x^cu_h^{n+1}\|_2\nonumber\\
    &\leq 2\lambda\Delta t C_{J1}^* C_u^*\|(I-\Pi^\Delta)F^{n+1}\|_\Delta\|\Pi^\Delta F^{n+1}\|_\Delta.\label{estim_T114}
\end{align}
It remains now to estimate $T_{12}^n$. After applying a Cauchy-Schwarz inequality followed by \eqref{estim_dtxphi_dis}, one obtains:
\begin{align}
    \left|T_{12}^n\right| &\leq \|J_h^{n+1}\|_2 \|D_x^c\Phi^{n+1}-D_x^c\Phi^{n}\|_2 \nonumber\\
    &\leq \Delta t(C_{J1}^*)^2\|(I-\Pi^\Delta)F^{n+1}\|_\Delta^2 + 2\Delta t\lambda C_{J1}^* C_u^*\|(I-\Pi^\Delta)F^{n+1}\|_\Delta\|\Pi^\Delta F^{n+1}\|_\Delta.\label{estim_T12}
\end{align}

Summarizing, combining estimates \eqref{estim_T111}, \eqref{estim_T112}, \eqref{estim_T113}, \eqref{estim_T114}, \eqref{estim_T12} and \eqref{estim_coercivite_micro_dis} in \eqref{eq_dtH_disc2}, it yields
\begin{equation}\label{estim_dtH_disc2}
\begin{aligned}
    H_\delta[F^{n+1}] - H_\delta[F^{n}] \leq &-\Delta t(C_{mc}^*-\delta(C_{J1}^*)^2)\|(I-\Pi^\Delta)F^{n+1}\|_\Delta^2 \\
    &-\Delta t\delta \underline{D_0}(C_u^*)^2\|\Pi^\Delta F^{n+1}\|_\Delta^2\\
    &\hspace{-6em}+\Delta t\delta(C_{S}^* C_u^* + C_{J2}^* C_P C_u^*+4\lambda C_{J1}^* C_u^*)\|(I-\Pi^\Delta)F^{n+1}\|_\Delta\|\Pi^\Delta F^{n+1}\|_\Delta
\end{aligned}
\end{equation}
Let us first set $\delta\in(0,\delta_1)$ with $\delta_1=C_{mc}^*(C_{J1}^*)^{-2}$ to ensure the positivity of $C_{mc}^*-\delta(C_{J1}^*)^2$. We can then rewrite the right hand side of \eqref{estim_dtH_disc2}.
\begin{equation}\label{estim_dtH_dis3}
\begin{aligned}
    H_\delta[F^{n+1}] - H_\delta[F^{n}] \leq &-\frac{\Delta t}{2} (C_{mc}^*-\delta(C_{J1}^*)^2)\|(I-\Pi^\Delta)F^{n+1}\|_\Delta^2\\
    &-\frac{\Delta t}{2}\delta \underline{D_0}(C_u^*)^2\|\Pi^\Delta F^{n+1}\|_\Delta^2\\
    &-\Delta t\|\Pi^\Delta F^{n+1}\|_\Delta^2P(\|(I-\Pi^\Delta)F^{n+1}\|_\Delta\|\Pi^\Delta F^{n+1}\|_\Delta^{-2}),
\end{aligned}
\end{equation}
where $P$ is the polynomial given by 
\begin{equation*}
    P(X) =\frac{1}{2}(C_{mc}^*-\delta\,(C_{J1}^*)^2)X^2 - \delta \widetilde{C}^\Delta X +\frac{1}{2}\delta\underline{D_0}(C_u^*)^2,
\end{equation*}
with $\widetilde{C}^\Delta=C_{S}^* C_u^* + C_{J2}^* C_P C_u^*+4\lambda C_{J1}^* C_u^*$. 

The sum of the first two terms in \eqref{estim_dtH_dis3} is nonpositive thanks to our choice of $\delta$. It remains to impose a second condition on $\delta$ to ensure that the polynomial $P$ is positive. Since the leading order coefficient of $P$ is positive, it suffices to choose $\delta$ in such a way that the discriminant is negative. It yields the following condition on $\delta$: 
\begin{equation}
    \delta_2\coloneqq\frac{C_{mc}^* \underline{D_0} (C_u^*)^2}{(\widetilde{C}^\Delta)^2+ (C_{J1}^*)^2 \underline{D_0} (C_u^*)^2}>\delta.
\end{equation}
Assuming that $\delta\in(0,\min(\delta_1,\delta_2))$, one then has
\begin{equation*}
    H_\delta[F^{n+1}] - H_\delta[F^{n}] \leq -\frac{\Delta t}{2}\left[ (C_{mc}^*-\delta(C_{J1}^*)^2)\|(I-\Pi^\Delta)F^{n+1}\|_\Delta^2 +\delta \underline{D_0}(C_u^*)^2\|\Pi^\Delta F^{n+1}\|_\Delta^2\right].
\end{equation*}
Finally, setting $K_\delta=\frac{1}{2}\min(C_{mc}^*-\delta(C_{J1}^*)^2,\delta \underline{D_0}(C_u^*)^2)$ and using orthogonality properties, one concludes the proof.
\end{proof}
\begin{proof}[Proof of Theorem \ref{theo_hypoco_dis_lin}]
Starting from the result of Proposition \ref{prop_evol_entropy_dis}, we use Lemma \ref{lem_eqnorm_dis} to bound $\|F^{n+1}\|_\Delta^2$ from below by $H_\delta[F^{n+1}]$ and obtain
\begin{equation*}
    (1+\Delta t\,\kappa)H_\delta^\Delta[F^{n+1}]\leq H_\delta^\Delta[F^n],
\end{equation*}
where we set $\kappa=\frac{K_\delta}{C_\delta}$ and $C_\delta=\frac{1}{2}+\delta\, C_{J1}^*\,C_u^*\,C_P+\delta\,\alpha_1^*\,\Delta t_{\max}$. It implies that for all $n\geq 1$, 
\begin{equation*}
    H_\delta^\Delta[F^{n+1}]\leq H_\delta^\Delta[F^1](1+\Delta t \kappa)^{-n}=H_\delta^\Delta[F^1]\exp\left(-t^{n}\log(1+\Delta t \kappa)/\Delta t\right).
\end{equation*}
Since $s\mapsto \log(1+s\kappa)/s$ is nonincreasing on $]0,+\infty[$ and $\Delta t\leq \Delta t_{max}$, we obtain
\[H_\delta^\Delta[F^{n+1}]\leq H_\delta^\Delta[F^1]\,e^{-\beta\,t^{n}},\]
where $\beta=\log(1+\Delta t_{max}\kappa)/\Delta t_{max}$. Then, using lemmas \ref{lem_eqnorm_dis} and \ref{lem_coercivite_micro_dis}, we finally get  
\[H_\delta^\Delta[F^{n+1}]\leq C_\delta \|F^1\|^2_\Delta \,e^{-\beta\,t^n}\leq C_\delta \|F^0\|^2_\Delta  \,e^{-\beta\,t^{n}}.\]
Finally, choosing 
\begin{equation*}
    \delta\in(0,\min(\delta_1,\delta_2,\delta_3)),
\end{equation*}
the constant on the left hand side in Lemma \ref{lem_eqnorm_dis} is positive and one can conclude the proof.
\end{proof}


\section{The nonlinear problem}\label{sec:discrete_nonlin}

In this section, we prove that approximate solutions to the nonlinear system with initial data sufficiently close to equilibrium converge exponentially towards this equilibrium as time goes to infinity. The idea of the proof is as follows: we first establish in Section \ref{sec:existence_bornes} that solutions to the numerical scheme \eqref{scheme_f_nonlin}--\eqref{scheme_g_nonlin} satisfy the maximum principle (and also that they exist), and then in Section \ref{sec:hypoco_nonlin}, using the fact that the entropy dissipation for the nonlinear problem is a small perturbation of the entropy dissipation of the linearized problem for initial data sufficiently close to equilibrium, the nonlinear discrete hypocoercivity is obtained.

\subsection{Existence and maximum principle}\label{sec:existence_bornes}

This section is devoted to the proof of the following result.

\begin{Theorem}\label{theo_existence_bornes}
Under assumptions \eqref{hyp_chi_dis}, let $\rhoinfs$ be defined by \eqref{def_rhoinf_dis_full} and assume that there exists positive constants $\gamma_1 < \rhoinfs$ and $\gamma_2$ such that the initial datum $\Fg^0=(\fg_{ij}^0,\ggot_{ij}^0)_{i\in\II,j\in\J}$ satisfies for all $i \in \II$, $j \in\J$
\begin{align*}
   & (\rhoinfs-\gamma_1)\,\chi_{1,j}\leq \fg_{ij}^0\leq (\rhoinfs+\gamma_2)\,\chi_{1,j},\\
   & (\rhoinfs+\gamma_2)\inv\,\chi_{2,j}\leq \ggot_{ij}^0\leq (\rhoinfs-\gamma_1)\inv\,\chi_{2,j}.
\end{align*}
Assuming moreover that $\lambda=\Delta x/2\,\Delta t \geq v^*/2$, the scheme \eqref{scheme_f_nonlin}--\eqref{scheme_g_nonlin} admits a solution $(\fg_{ij}^n,\ggot_{ij}^n)_{i\in\II,j\in\J,n\geq0}$ such that for all $i\in\II$, $j\in\J$, $n\geq 0$,
\begin{align}
   & (\rhoinfs-\gamma_1)\,\chi_{1,j}\leq \fg_{ij}^n\leq (\rhoinfs+\gamma_2)\,\chi_{1,j},\label{bornes_f_dis}\\
   & (\rhoinfs+\gamma_2)\inv\,\chi_{2,j}\leq \ggot_{ij}^n\leq (\rhoinfs-\gamma_1)\inv\,\chi_{2,j}.\label{bornes_g_dis}
\end{align}
\end{Theorem}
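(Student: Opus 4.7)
The plan is to prove existence and the maximum principle jointly by induction on $n$, using a Brouwer fixed-point argument at each time step that exploits the monotone M-matrix structure provided by the Lax-Friedrichs flux together with the CFL-type condition $\lambda\geq v^*/2$. Assuming $\Fg^n=(\fg^n,\ggot^n)$ satisfies \eqref{bornes_f_dis}--\eqref{bornes_g_dis}, I want to construct $\Fg^{n+1}$ with the same bounds. Substituting the flux \eqref{flux_F} into \eqref{scheme_f_nonlin} and reorganizing, the scheme takes the algebraic form
\begin{equation*}
    \Bigl(1+\tfrac{2\lambda\Delta t}{\Delta x}+\Delta t\,\rho_{\ggot,i}^{n+1}\Bigr)\fg_{ij}^{n+1}+\tfrac{\Delta t}{\Delta x}\bigl(\tfrac{v_j}{2}-\lambda\bigr)\fg_{i+1,j}^{n+1}-\tfrac{\Delta t}{\Delta x}\bigl(\tfrac{v_j}{2}+\lambda\bigr)\fg_{i-1,j}^{n+1}=\fg_{ij}^n+\Delta t\,\chi_{1,j},
\end{equation*}
and analogously for $\ggot^{n+1}$. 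Since $|v_j|\leq v^*$, the condition $\lambda\geq v^*/2$ is precisely what renders the two off-diagonal coefficients nonpositive --- the monotonicity property unavailable for centered fluxes (cf.\ Remark \ref{remark_fluxes}).

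To decouple the nonlinearity, I introduce the compact convex set $\mathcal{K}\subset\R^{\II\times\J}\times\R^{\II\times\J}$ of pairs $(\widetilde{\fg},\widetilde{\ggot})$ satisfying \eqref{bornes_f_dis}--\eqref{bornes_g_dis} nodewise, together with the map $\Psi:\mathcal{K}\to\R^{\II\times\J}\times\R^{\II\times\J}$ defined by $\Psi(\widetilde{\fg},\widetilde{\ggot})=(\fg,\ggot)$, where $(\fg,\ggot)$ solves the two \emph{decoupled} linear systems obtained by freezing the nonlinear densities: $\rho_{\ggot,i}^{n+1}\to\rho_{\widetilde{\ggot},i}$ in the $\fg$-equation and $\rho_{\fg,i}^{n+1}\to\rho_{\widetilde{\fg},i}$ in the $\ggot$-equation. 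Under the CFL condition, each decoupled linear system is governed by a strictly diagonally dominant matrix with nonpositive off-diagonals and strictly positive diagonal, hence an M-matrix. This yields unique solvability together with a nonnegative inverse, i.e.\ a discrete comparison principle.

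To show that $\Psi(\mathcal{K})\subset\mathcal{K}$, I test the linear operator against the constant-in-$i$ barrier $\overbar{\fg}_{ij}=(\rhoinfs+\gamma_2)\chi_{1,j}$. Because the transport part of the operator annihilates $i$-independent data, only the reaction term survives, and a short computation shows that the induction hypothesis $\fg^n\leq\overbar{\fg}$ combined with the bound $\rho_{\widetilde{\ggot},i}\geq(\rhoinfs+\gamma_2)\inv$ --- a direct consequence of the lower bound on $\widetilde{\ggot}$ and $\sum_j\Delta v\,\chi_{2,j}=1$ --- yields the pointwise inequality between the image of $\overbar{\fg}$ under the linear operator and the right-hand side of the system. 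The M-matrix comparison principle then forces $\fg_{ij}\leq\overbar{\fg}_{ij}$. The three remaining inequalities --- the lower bound on $\fg$ and the two bounds on $\ggot$ --- follow by perfectly symmetric arguments, each time using that a pointwise bound on one species integrates to the required bound on the density of the other. Continuity of $\Psi$ is automatic from the smooth dependence of a nonsingular matrix inverse on its coefficients, and Brouwer's theorem then produces a fixed point in $\mathcal{K}$, which by construction solves \eqref{scheme_f_nonlin}--\eqref{scheme_g_nonlin}.

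The main obstacle is the coupling of the four bounds through the nonlinearity: the upper bound on $\fg$ requires a \emph{lower} bound on $\rho_{\ggot}$, and vice versa, so no single inequality can be propagated in isolation. The fixed-point framework is precisely what allows all four of them to close simultaneously. The CFL-type assumption $\lambda\geq v^*/2$ is used in two distinct places --- for the M-matrix structure of the frozen linear systems and for the comparison principle that propagates the barriers --- which explains why the monotone Lax-Friedrichs fluxes are indispensable here, whereas centered fluxes sufficed in the linear hypocoercivity analysis.
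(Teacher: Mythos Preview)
Your argument is correct and complete, but it follows a genuinely different path from the paper's.

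The paper proceeds via an auxiliary \emph{truncated} scheme: the nonlinear terms $\rho_{\ggot}\fg$ and $\rho_{\fg}\ggot$ are replaced by $\tilde{\rho}_{\ggot}\tilde{\fg}$ and $\tilde{\rho}_{\fg}\tilde{\ggot}$, where the tildes denote clipping into the target interval. Existence for this truncated scheme is obtained not by Brouwer but by a coercivity/degree argument (the lemma $\langle P(\xi),\xi\rangle>0$ on a large sphere implies a zero inside, from Temam), and the maximum principle is established \emph{separately} by multiplying the scheme at the extremal index by the negative part $(\fg_{ij}^{n+1}-(\rhoinfs-\gamma_1)\chi_{1,j})^-$ and exploiting the monotonicity of the flux in the two-point form $\varphi_j(a,b)$. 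Once the bounds are shown for the truncated solution, the truncation becomes inactive and the solution solves the original scheme.

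Your route is more integrated: you freeze only the coupling densities, observe that the resulting linear operators are strictly diagonally dominant M-matrices under $\lambda\geq v^*/2$, and use the associated comparison principle against the constant-in-$i$ barriers to get $\Psi(\mathcal{K})\subset\mathcal{K}$ and then Brouwer. This avoids introducing the truncated problem altogether and packages existence and the a priori bounds into a single step. The paper's approach, on the other hand, separates the two issues cleanly and makes the role of flux monotonicity very explicit through the pointwise supersolution/subsolution computation at the extremal index; it also does not rely on the linear-algebraic M-matrix machinery. Both arguments use exactly the same structural ingredients---the CFL condition for monotonicity and the reciprocal coupling of the four bounds through the densities---so neither is more general, but yours is arguably more streamlined.
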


 \begin{Remark}
 The condition $\lambda\geq v^*/2$ is there to ensure the monotonicity of the Lax-Friedrichs fluxes, which is necessary in our proof. In practice, we observe that this restriction is  necessary in the numerical results presented in Section \ref{sec:numeric}.
 \end{Remark}

To prove this result, we introduce the following truncated version of the scheme \eqref{scheme_f_nonlin}--\eqref{scheme_g_nonlin}:
\begin{align}
    & \frac{\fg_{ij}^{n+1}-\fg_{ij}^n}{\Delta t}+\frac{1}{\Delta x\,\Delta v}\left(\mathcal{F}_{\iph,}^{n+1}-\mathcal{F}_{\imh,j}^{n+1}\right)=\chi_{1,j}-\tilde{\rho}_{\ggot,i}^{n+1}\tilde{\fg}_{ij}^{n+1} ,\label{scheme_f_nonlin_trunc}\\
    & \frac{\ggot_{ij}^{n+1}-\ggot_{ij}^n}{\Delta t}+\frac{1}{\Delta x\,\Delta v}\left(\mathcal{G}_{\iph,j}^{n+1}-\mathcal{G}_{\imh,j}^{n+1}\right)=\chi_{2,j}-\tilde{\rho}_{\fg,i}^{n+1}\tilde{\ggot}_{ij}^{n+1},\label{scheme_g_nonlin_trunc}
\end{align}
with the Lax-Friedrichs fluxes \eqref{flux_F}--\eqref{flux_G}, and where the truncated quantities are defined by
\begin{equation}\label{def_f_trunc}
    \tilde{\fg}_{ij}\coloneqq\left\{\begin{array}{ll}
        (\rhoinfs-\gamma_1)\chi_{1,j} & \text{ if } \fg_{ij}\leq (\rhoinfs-\gamma_1)\chi_{1,j}  \\
        \fg_{ij} & \text{ if } (\rhoinfs-\gamma_1)\chi_{1,j}\leq \fg_{ij}\leq (\rhoinfs+\gamma_2)\chi_{1,j}\\
        (\rhoinfs+\gamma_2)\chi_{1,j} & \text{ if } \fg_{ij} \geq (\rhoinfs+\gamma_2)\chi_{1,j},
    \end{array}\right.
\end{equation}
and
\begin{equation}\label{def_g_trunc}
    \tilde{\ggot}_{ij}\coloneqq\left\{\begin{array}{ll}
        (\rhoinfs+\gamma_2)\inv\chi_{2,j} & \text{ if } \ggot_{ij}\leq (\rhoinfs+\gamma_2)\inv\chi_{2,j}  \\
        \ggot_{ij} & \text{ if } (\rhoinfs+\gamma_2)\inv\chi_{2,j}\leq \ggot_{ij}\leq (\rhoinfs-\gamma_1)\inv\chi_{2,j}\\
        (\rhoinfs-\gamma_1)\inv\chi_{2,j} & \text{ if } \ggot_{ij} \geq (\rhoinfs-\gamma_1)\inv\chi_{2,j}.
    \end{array}\right.
\end{equation}
The corresponding truncated densities are then given by
\begin{equation*}
    \tilde{\rho}_{\fg,i}=\sum_{j\in\J}\Delta v\,\tilde{\fg}_{ij},\qquad \tilde{\rho}_{\ggot,i}=\sum_{j\in\J}\Delta v\,\tilde{\ggot}_{ij} \qquad \forall i\in\II.
\end{equation*}

To prove Theorem \ref{theo_existence_bornes}, we start by establishing the existence of a solution to the truncated scheme (Lemma \ref{lem_existence_trunc}) and then the fact that a solution to the truncated scheme verifies the estimates \eqref{bornes_f_dis}--\eqref{bornes_g_dis} (Lemma \ref{lem_bornes_trunc}).

\begin{Lemma}\label{lem_existence_trunc}
Given any $(\fg_{ij}^n,\ggot_{ij}^n)_{i\in\II,j\in\J}$, the truncated scheme \eqref{scheme_f_nonlin_trunc}--\eqref{scheme_g_nonlin_trunc} admits at least one solution $(\fg_{ij}^{n+1},\ggot_{ij}^{n+1})_{i\in\II,j\in\J}$.
\end{Lemma}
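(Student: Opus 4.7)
The plan is to apply Brouwer's fixed point theorem to a decoupling map built from the truncated scheme. Given any $(F^*, G^*) \in \R^{\II \times \J} \times \R^{\II \times \J}$, I form the truncated data $\tilde{F}^*, \tilde{G}^*$ and corresponding densities $\tilde{\rho}_{F^*}, \tilde{\rho}_{G^*}$ via \eqref{def_f_trunc}--\eqref{def_g_trunc}, and define $T(F^*, G^*) \coloneqq (F, G)$ as the unique solution of the two decoupled linear systems
\begin{align*}
F_{ij} + \frac{\Delta t}{\Delta x\,\Delta v}\!\left(\mathcal{F}_{\iph,j}[F] - \mathcal{F}_{\imh,j}[F]\right) &= \fg_{ij}^n + \Delta t\,\chi_{1,j} - \Delta t\,\tilde\rho_{G^*,i}\,\tilde F^*_{ij},\\
G_{ij} + \frac{\Delta t}{\Delta x\,\Delta v}\!\left(\mathcal{G}_{\iph,j}[G] - \mathcal{G}_{\imh,j}[G]\right) &= \ggot_{ij}^n + \Delta t\,\chi_{2,j} - \Delta t\,\tilde\rho_{F^*,i}\,\tilde G^*_{ij}.
\end{align*}
By construction any fixed point of $T$ is a solution of \eqref{scheme_f_nonlin_trunc}--\eqref{scheme_g_nonlin_trunc}.

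First I would verify that $T$ is well defined. Writing out the left-hand side using the Lax-Friedrichs flux \eqref{flux_F} and the CFL identification $\lambda = \Delta x/(2\Delta t)$, one checks that for each fixed $j \in \J$ the operator acts on $(F_{ij})_{i\in\II}$ as a periodic circulant matrix with diagonal entry $2$ and off-diagonal entries $\pm\frac{v_j}{4\lambda} - \frac{1}{2}$. The monotonicity condition $\lambda \geq v^*/2$ ensures $|v_j|/(4\lambda) \leq 1/2$, so both off-diagonals are non-positive and their absolute values sum to exactly $1$. This yields strict diagonal dominance (and in fact an M-matrix structure with $A\mathbf{1} = \mathbf{1}$, giving $\|A^{-1}\|_\infty \leq 1$), and the same argument works for $G$. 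Hence each linear system has a unique solution.

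Next I would verify the hypotheses of Brouwer's theorem. Continuity of $T$ is immediate because the truncations \eqref{def_f_trunc}--\eqref{def_g_trunc} are Lipschitz cut-offs and the linear solve depends linearly on the right-hand side. For the invariant ball, the key point is that the truncated products $\tilde\rho_{G^*,i}\tilde F^*_{ij}$ and $\tilde\rho_{F^*,i}\tilde G^*_{ij}$ are uniformly bounded by a constant depending only on $\rhoinfs$, $\gamma_1$, $\gamma_2$ and the $(\chi_{k,j})$, not on the norm of $(F^*, G^*)$. Combined with the $\|\cdot\|_\infty$-contractivity of the inverse established above, this yields $\|T(F^*, G^*)\|_\infty \leq R$ for some $R$ independent of $(F^*, G^*)$. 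Thus $T$ maps the closed ball $\{\|\cdot\|_\infty \leq R\}$ continuously into itself, and Brouwer's fixed point theorem delivers the desired fixed point.

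The main obstacle is the invertibility step, which is precisely where the choice of Lax-Friedrichs fluxes intervenes: the numerical viscosity $-\lambda(F_{i+1,j} - 2F_{ij} + F_{i-1,j})$ is what produces the strict diagonal dominance once $\lambda \geq v^*/2$. With central fluxes the off-diagonal part would be purely antisymmetric and the M-matrix structure would be lost, so one could no longer obtain the a priori $\|\cdot\|_\infty$-bound on $T(F^*, G^*)$ that powers the fixed point argument, consistently with the discussion in Remark~\ref{remark_fluxes}.
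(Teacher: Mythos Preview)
Your argument is correct and complete: freezing the truncated nonlinearity, inverting the implicit Lax--Friedrichs transport operator via strict diagonal dominance under $\lambda\geq v^*/2$, and then closing with Brouwer on an $L^\infty$ ball all work exactly as you describe.

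The paper takes a different route. Instead of decoupling and inverting a linear system, it applies the classical consequence of Brouwer stating that a continuous map $P:X\to X$ with $\langle P(\xi),\xi\rangle>0$ on a large sphere must vanish somewhere inside. Here $P$ is simply the residual of the truncated scheme, and the three pieces of $\langle P(\Fg,\ggot),(\Fg,\ggot)\rangle$ are controlled by (i) the elementary inequality $a(a-b)\geq(a^2-b^2)/2$ for the discrete time derivative, (ii) the computation already done in Lemma~\ref{lem_coercivite_micro_dis} showing that the Lax--Friedrichs flux contribution is nonnegative in the $\ell^2$ inner product, and (iii) the uniform bound on the truncated collision term. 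This is an energy ($\ell^2$) argument rather than an $\ell^\infty$ one.

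One consequence is worth noting. The paper's proof does \emph{not} use the monotonicity condition $\lambda\geq v^*/2$: step (ii) only needs $\lambda>0$, and in fact the same computation gives exactly zero for the centered fluxes, so existence for the truncated scheme would hold for those as well. Your final paragraph therefore slightly misattributes the role of monotonicity: it is Lemma~\ref{lem_bornes_trunc} (the discrete maximum principle) that genuinely requires monotone fluxes, not the present existence lemma. Your $\ell^\infty$/M-matrix route happens to import the condition $\lambda\geq v^*/2$ into this lemma too, which is harmless in context (it is assumed in Theorem~\ref{theo_existence_bornes}) but not intrinsic to the statement.
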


\begin{proof}
The proof relies on the following result which is proven for example in \cite[Lemma 1.4, Chapter II]{temam1984}.
\begin{Lemma}\label{lem_existence_Navier}
    Let $X$ be a finite dimensional Hilbert space with scalar product $\langle\cdot,\cdot\rangle_X$ and associated norm $\|\cdot\|_X$. Let $P:X\rightarrow X$ be a continuous mapping such that $\langle P(\xi),\xi\rangle_X >0$ for all $\xi\in X$ such that $\|\xi\|_X=k$ for some fixed $k>0$. Then there exists $\xi_0\in X$ such that $\|\xi_0\|_X\leq k$ and $P(\xi)=0$.
\end{Lemma}
We apply this result with $X=\left(\R^{(2L+1)N} \right)^2$ where for $u^1=(f^1,g^1) \in X$ and $u^2=(f^2,g^2)\in X$,
\begin{equation*}
    \langle u^1_\Delta,u^2_\Delta\rangle_X=\langle f^1_\Delta,f^2_\Delta\rangle+\langle g^1_\Delta,g^2_\Delta\rangle,
\end{equation*}
$\langle \cdot,\cdot \rangle$ being the classical Euclidian dot product on $\R^{(2L+1)N}$.
Then, let us define
\begin{equation*}
    P:\begin{pmatrix} \fg\\\ggot \end{pmatrix}\mapsto
    \begin{pmatrix} \Delta x\Delta v(\fg_{ij}-\fg_{ij}^n)+\Delta t(\mathcal{F}_{\iph,j}-\mathcal{F}_{\imh,j})-\Delta x\Delta v\Delta t(\chi_{1,j}-\tilde{\rho}_{\ggot,i}\tilde{\fg}_{ij})\\ 
    \Delta x\Delta v(\ggot_{ij}-\ggot_{ij}^n)+\Delta t(\mathcal{G}_{\iph,j}-\mathcal{G}_{\imh,j})-\Delta x\Delta v\Delta t(\chi_{1,j}-\tilde{\rho}_{\fg,i}\tilde{\ggot}_{ij})
    \end{pmatrix},
\end{equation*}
where $\mathcal{F}$ and $\mathcal{G}$ are the numerical fluxes \eqref{flux_F}--\eqref{flux_G} computed with $\fg_{ij}$ and $\ggot_{ij}$. Let us now show that
\begin{equation*}
    \left\langle P(\begin{pmatrix} \fg\\\ggot \end{pmatrix}), \begin{pmatrix} \fg\\\ggot \end{pmatrix} \right\rangle_X > 0, \quad \text{ for all } \begin{pmatrix} \fg\\\ggot \end{pmatrix} \text{ such that }\left\|\begin{pmatrix} \fg\\\ggot \end{pmatrix}\right\|_X=k,
\end{equation*}
where $k$ is taken large enough. This scalar product splits into 3 terms, 
\begin{equation*}
    \left\langle P(\begin{pmatrix} \fg\\\ggot \end{pmatrix}), \begin{pmatrix} \fg\\\ggot \end{pmatrix} \right\rangle_X=A_1+A_2+A_3,
\end{equation*} 
where
\begin{align*}
    A_1 &= \sum_{(i,j)\in\II\times\J} \Delta x\Delta v\left((\fg_{ij}-\fg_{ij}^n)\fg_{ij} + (\ggot_{ij}-\ggot_{ij}^n)\ggot_{ij}\right),\\
    A_2 &= \Delta t\sum_{(i,j)\in\II\times\J} \left((\mathcal{F}_{\iph,j}-\mathcal{F}_{\imh,j})\fg_{ij} + (\mathcal{G}_{\iph,j}-\mathcal{G}_{\imh,j})\ggot_{ij}\right),\\
    A_3 &= -\Delta t\sum_{(i,j)\in\II\times\J} \Delta x\Delta v\left((\chi_{1,j}-\tilde{\rho}_{\ggot,i}\tilde{\fg}_{ij})\fg_{ij} + (\chi_{2,j}-\tilde{\rho}_{\fg,i}\tilde{\ggot}_{ij})\ggot_{ij}\right).
\end{align*}
Using the relation $a(a-b)\geq(a^2-b^2)/2$ one gets
\begin{equation}\label{estim_A1}
    A_1 \geq\frac{1}{2}\left( \left\| \begin{pmatrix} \fg\\\ggot \end{pmatrix}\right\|_X^2 - \left\| \begin{pmatrix} \fg^n\\\ggot^n \end{pmatrix}\right\|_X^2\right).
\end{equation}
Then, by definition of the numerical fluxes, one can use the same computations as in the proof of Lemma \ref{lem_coercivite_micro_dis} to obtain
\begin{equation}\label{estim_A2}
    A_2 = 2\lambda\Delta t\Delta x\sum_{(i,j)\in\II\times\J} \left((D_x^c \fg_j)_i^2 + (D_x^c \ggot_j)_i^2\right) \Delta x\Delta v \geq 0.
\end{equation}
Now, by assumptions \eqref{hyp_chi_dis}, the velocity profiles are bounded:
\begin{equation*}
    \exists C_\chi>0 \quad\text{ such that }\quad \forall j\in\J, \quad 0\leq \chi_{1,j},\chi_{2,j}\leq C_\chi.
\end{equation*}
Then, by definition of the truncated quantities, there exists a constant $C>0$ such that
\begin{equation*}
    |\chi_{1,j}-\tilde{\rho}_{\ggot,i}\tilde{\fg}_{ij}|\leq C\quad\text{and}\quad |\chi_{2,j}-\tilde{\rho}_{\fg,i}\tilde{\ggot}_{ij}|\leq C.
\end{equation*}
Applying a Cauchy-Schwarz inequality on $A_3$, one gets
\begin{equation}\label{estim_A3}
    A_3\geq -\Delta t C \left\| \begin{pmatrix} \fg\\\ggot \end{pmatrix}\right\|_X.
\end{equation}
Combining \eqref{estim_A1}, \eqref{estim_A2} and \eqref{estim_A3} yields the estimate
\begin{equation*}
    \left\langle P(\begin{pmatrix} \fg\\\ggot \end{pmatrix}), \begin{pmatrix} \fg\\\ggot \end{pmatrix} \right\rangle_X\geq 
    \frac{1}{2}\left( \left\| \begin{pmatrix} \fg\\\ggot \end{pmatrix}\right\|_X^2 - \left\| \begin{pmatrix} \fg^n\\\ggot^n \end{pmatrix}\right\|_X^2\right) -\Delta t C \left\| \begin{pmatrix} \fg\\\ggot \end{pmatrix}\right\|_X.
\end{equation*} 
The right hand side is a second order polynomial in $\left\| \begin{pmatrix} \fg\\\ggot \end{pmatrix}\right\|_X$ with a positive leading coefficient. Since $\left\| \begin{pmatrix} \fg^n \\\ggot^n \end{pmatrix}\right\|_X$ is a constant in this context, there exists $k>0$ such that if $\left\| \begin{pmatrix} \fg\\\ggot \end{pmatrix}\right\|_X\geq k$ then 
\begin{equation*}
    \left\langle P(\begin{pmatrix} \fg\\\ggot \end{pmatrix}), \begin{pmatrix} \fg\\\ggot \end{pmatrix} \right\rangle_X> 0.
\end{equation*} 
Finally, we can apply Lemma \ref{lem_existence_Navier} to obtain existence of $\begin{pmatrix} \fg^{n+1} \\\ggot^{n+1} \end{pmatrix}$ such that $P(\begin{pmatrix} \fg^{n+1} \\\ggot^{n+1} \end{pmatrix})=0$, therefore ensuring existence of a solution to the truncated scheme \eqref{scheme_f_nonlin_trunc}-\eqref{scheme_g_nonlin_trunc}.
\end{proof}

\begin{Lemma}\label{lem_bornes_trunc}
If  $(\fg_{ij}^n,\ggot_{ij}^n)_{i\in\II,j\in\J}$ satisfies the estimates \eqref{bornes_f_dis}--\eqref{bornes_g_dis}, then any solution\\ $(\fg_{ij}^{n+1},\ggot_{ij}^{n+1})_{i\in\II,j\in\J}$ to the truncated scheme \eqref{scheme_f_nonlin_trunc}--\eqref{scheme_g_nonlin_trunc} also satisfies these estimates.
\end{Lemma}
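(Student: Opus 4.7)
The plan is to exploit the monotone structure of the Lax-Friedrichs discretization, which is precisely what the hypothesis $\lambda\geq v^*/2$ enforces. Using the identity $\lambda=\Delta x/(2\Delta t)$ and regrouping the flux contributions in \eqref{scheme_f_nonlin_trunc}, the first equation of the truncated scheme can be recast in the equivalent form
\begin{equation*}
    2\,\fg_{ij}^{n+1} = \fg_{ij}^{n} + a_j^-\,\fg_{i-1,j}^{n+1} + a_j^+\,\fg_{i+1,j}^{n+1} + \Delta t\,\bigl(\chi_{1,j} - \tilde{\rho}_{\ggot,i}^{n+1}\,\tilde{\fg}_{ij}^{n+1}\bigr),
\end{equation*}
with $a_j^-\coloneqq\frac{\Delta t}{\Delta x}(\lambda+v_j/2)$ and $a_j^+\coloneqq\frac{\Delta t}{\Delta x}(\lambda-v_j/2)$, both nonnegative under $\lambda\geq v^*/2\geq|v_j|/2$ and satisfying $a_j^-+a_j^+=1$. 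An entirely analogous identity holds for $\ggot_{ij}^{n+1}$ from \eqref{scheme_g_nonlin_trunc}.

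I first prove the upper bound on $\fg$ in \eqref{bornes_f_dis} by contradiction; the other three bounds then follow by the same template. Set $W_{ij}\coloneqq\fg_{ij}^{n+1}-(\rhoinfs+\gamma_2)\chi_{1,j}$ and pick $(i^*,j^*)\in\II\times\J$ maximizing $W$. Subtracting $2(\rhoinfs+\gamma_2)\chi_{1,j^*}$ from both sides of the reformulated scheme at $(i^*,j^*)$, using $a_{j^*}^-+a_{j^*}^+=1$ to absorb the constant contributions and the maximality $W_{i^*\pm 1,j^*}\leq W_{i^*j^*}$ to control the transport part, one arrives at
\begin{equation*}
    W_{i^*j^*} \;\leq\; \bigl(\fg_{i^*j^*}^{n}-(\rhoinfs+\gamma_2)\chi_{1,j^*}\bigr) + \Delta t\,\bigl(\chi_{1,j^*}-\tilde{\rho}_{\ggot,i^*}^{n+1}\,\tilde{\fg}_{i^*j^*}^{n+1}\bigr).
\end{equation*}
The first parenthesis is nonpositive by the induction hypothesis on $\fg^{n}$. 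Assume for contradiction that $W_{i^*j^*}>0$: the definition \eqref{def_f_trunc} then forces $\tilde{\fg}_{i^*j^*}^{n+1}=(\rhoinfs+\gamma_2)\chi_{1,j^*}$, while the cell-by-cell lower bound in \eqref{def_g_trunc} combined with the normalization $\sum_{j\in\J}\Delta v\,\chi_{2,j}=1$ from \eqref{hyp_chi_dis} gives $\tilde{\rho}_{\ggot,i^*}^{n+1}\geq(\rhoinfs+\gamma_2)\inv$. Multiplying these two bounds shows that the second parenthesis is at most $\chi_{1,j^*}-\chi_{1,j^*}=0$, whence $W_{i^*j^*}\leq 0$ --- a contradiction.

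The remaining three inequalities follow by identical arguments. For the lower bound on $\fg$ one considers the maximum of $(\rhoinfs-\gamma_1)\chi_{1,j}-\fg_{ij}^{n+1}$ and uses the upper truncation in \eqref{def_g_trunc} to deduce $\tilde{\rho}_{\ggot,i^*}^{n+1}\leq(\rhoinfs-\gamma_1)\inv$; the two bounds on $\ggot$ are obtained symmetrically from the reformulation of \eqref{scheme_g_nonlin_trunc}. The delicate ingredient of the whole argument is really the sign matching between the truncation and the reaction term at the extremum: the choices in \eqref{def_f_trunc}--\eqref{def_g_trunc} are designed exactly so that the source has the correct sign to close each contradiction. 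The other ingredient --- the convex-combination rewriting --- is what makes the transport part harmless at the extremum, and it is there that monotonicity of the Lax-Friedrichs flux (hence the assumption $\lambda\geq v^*/2$) plays a crucial role, in line with Remark~\ref{remark_fluxes}.
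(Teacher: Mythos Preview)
Your proof is correct and follows essentially the same extremum-plus-monotonicity argument as the paper: both pick the index realizing the extremal deviation from the barrier, use the monotone structure of the Lax--Friedrichs flux to neutralize the transport contribution there, and then exploit the design of the truncations \eqref{def_f_trunc}--\eqref{def_g_trunc} to get the correct sign on the reaction term. The only cosmetic differences are that you write out the explicit convex-combination form (using $\lambda=\Delta x/(2\Delta t)$) whereas the paper phrases the flux step via the abstract two-point monotone form $\varphi_j(a,b)$, and you showcase the upper bound on $\fg$ while the paper showcases the lower bound.
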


\begin{proof}
Let us focus on showing that a solution to the nonlinear truncated scheme \eqref{scheme_f_nonlin_trunc}-\eqref{scheme_g_nonlin_trunc} satisfies
\begin{equation*}
    \fg_{ij}^{n+1} \geq (\rhoinfs-\gamma_1)\chi_{1,j}.
\end{equation*}
We start by setting $(i,j)\in\II\times\J$ such that 
\begin{equation}\label{eq_ij_bounds}
    \fg_{ij}^{n+1} - (\rhoinfs-\gamma_1)\chi_{1,j} = \underset{(k,l)\in\II\times\J}{\min}(\fg_{kl}^{n+1} - (\rhoinfs-\gamma_1)\chi_{1,l}).
\end{equation}
Our aim is now to show that this quantity is nonnegative. We begin by multiplying the line of \eqref{scheme_f_nonlin_trunc} corresponding to the fixed pair $(i,j)$ by
\begin{equation}\label{eq_negPart}
    (\fg_{ij}^{n+1} - (\rhoinfs-\gamma_1)\chi_{1,j})^-=\min(0,\fg_{ij}^{n+1} - (\rhoinfs-\gamma_1)\chi_{1,j})\leq 0.
\end{equation}
It yields an expression of the form $B_1=B_2+B_3$ where
\begin{align*}
    B_1 &= \Delta x\Delta v(\fg_{ij}^{n+1}-\fg_{ij}^{n})(\fg_{ij}^{n+1} - (\rhoinfs-\gamma_1)\chi_{1,j})^-,\\
    B_2 &= -\Delta t(\mathcal{F}_{\iph,j}^{n+1}-\mathcal{F}_{\imh,j}^{n+1})(\fg_{ij}^{n+1} - (\rhoinfs-\gamma_1)\chi_{1,j})^-,\\
    B_3 &=  \Delta t\Delta x\Delta v(\chi_{1,j}-\tilde{\rho}_{\ggot,i}^{n+1}\tilde{\fg}_{ij}^{n+1})(\fg_{ij}^{n+1} - (\rhoinfs-\gamma_1)\chi_{1,j})^-.
\end{align*}
Starting with $B_1$, we add and subtract $(\rhoinfs-\gamma_1)\chi_{1,j}$ to obtain
\begin{equation*}
    B_1 = \Delta x\Delta v( (\fg_{ij}^{n+1}-(\rhoinfs-\gamma_1)\chi_{1,j}) - (\fg_{ij}^{n}-(\rhoinfs-\gamma_1)\chi_{1,j}))(\fg_{ij}^{n+1} - (\rhoinfs-\gamma_1)\chi_{1,j})^-
\end{equation*}
Then, under the hypothesis that $\fg^n_{ij}$ satisfies the bounds \eqref{bornes_f_dis} and by definition \eqref{eq_negPart} of the negative part, 
\begin{equation}\label{estim_B1}
    B_1 \geq \Delta x\Delta v\,(\fg_{ij}^{n+1}-(\rhoinfs-\gamma_1)\chi_{1,j})(\fg_{ij}^{n+1} - (\rhoinfs-\gamma_1)\chi_{1,j})^- \geq 0.
\end{equation}
Next, we turn our attention to $B_3$. We need to consider two cases:
\begin{itemize}
    \item If $\fg_{ij}^{n+1}\geq(\rhoinfs-\gamma_1)\chi_{1,j}$ then $B_3=0$.
    \item Else, by definition of the truncated quantities \eqref{def_f_trunc} and \eqref{def_g_trunc}, one has $\tilde{\fg}_{ij}^{n+1} = (\rhoinfs-\gamma_1)\chi_{1,j}$ and since the discrete velocity profiles have unit mass, $\tilde{\rho}_{\ggot,i}^{n+1}\leq(\rhoinfs-\gamma_1)\inv$. Therefore, 
    \begin{equation*}
        \chi_{1,j}-\tilde{\rho}_{\ggot,i}^{n+1}\tilde{\fg}_{ij}^{n+1} \geq \chi_{1,j}-\frac{\rhoinfs-\gamma_1}{\rhoinfs-\gamma_1}\chi_{1,j}\geq 0.
    \end{equation*}
\end{itemize}
As a result, one gets
\begin{equation}\label{estim_B3}
    B_3\leq 0.
\end{equation}
Finally, the sign of $B_2$ relies on the monotonicity of the numerical flux. More precisely, let us consider a monotone numerical flux in a general two-point approximation form: 
\begin{equation*}
    \mathcal{F}_{\iph,j}^{n+1}=\varphi_j(\fg_{ij}^{n+1},\fg_{i+1,j}^{n+1}),
\end{equation*}
where $a\mapsto \varphi_j(a,\cdot)$ is assumed to be nondecreasing and $b\mapsto \varphi_j(\cdot,b)$ is assumed to be nonincreasing. Then, the balance of fluxes at cell $\mathcal{X}_i$ rewrites as
\begin{align}\label{eq_fluxmono_varphi}
    \mathcal{F}_{\iph,j}^{n+1}-\mathcal{F}_{\imh,j}^{n+1}=\, &\varphi_j(\fg_{ij}^{n+1},\fg_{i+1,j}^{n+1}) - \varphi_j(\fg_{ij}^{n+1},\fg_{i,j}^{n+1})\nonumber\\
    & + \varphi_j(\fg_{ij}^{n+1},\fg_{ij}^{n+1}) - \varphi_j(\fg_{i-1,j}^{n+1},\fg_{ij}^{n+1}).
\end{align}
From our choice \eqref{eq_ij_bounds} of the pair $(i,j)$, we deduce that
\begin{equation*}
    \fg_{ij}^{n+1} - (\rhoinfs-\gamma_1)\chi_{1,j} \leq \fg_{kl}^{n+1} - (\rhoinfs-\gamma_1)\chi_{1,l},\quad\forall (k,l)\in\II\times\J,
\end{equation*}
therefore in particular, for every $k\in\II$, $\fg_{ij}^{n+1}\leq\fg_{kj}^{n+1}$. Consequently, due to the monotonicity of the function $\varphi_j$, it yields
\begin{equation}\label{estim_B2}
    B_2 \leq 0.
\end{equation}
At this point, one could select any flux satisfying the monotonicity property. In our framework, we opted for the Lax-Friedrichs flux
\begin{equation*}
    \varphi_j(a,b)=\frac{v_j\Delta v}{2}(a+b) - \lambda\Delta v(b-a)
\end{equation*}
which is monotone under condition. More precisely, we want to ensure the nonnegativity of the first partial derivative of $\varphi_j$ and the nonpositivity of the second. Therefore, one imposes $\lambda=\Delta x/2\,\Delta t$ such that
\begin{align*}
    &\partial_a \varphi_j(a,b) \geq 0 \iff \frac{v_j}{2} + \lambda \geq 0 \iff -\frac{v_j}{2} \leq \lambda,\\
    &\partial_b \varphi_j(a,b) \leq 0 \iff \frac{v_j}{2} -\lambda \geq 0 \iff \frac{v_j}{2} \leq \lambda.
\end{align*}
In the end, the choice $\lambda\geq v_\star/2$ suffices.

Gathering \eqref{estim_B1}, \eqref{estim_B3} and \eqref{estim_B2} into $B_1=B_2+B_3$, we get that $B_1=0$, and conseuquently that
\begin{equation*}
    \fg_{i,j}^{n+1} - (\rhoinfs-\gamma_1)\chi_{1,j} \geq 0.
\end{equation*}
Lastly, since we chose $(i,j)$ satisfying \eqref{eq_ij_bounds}, 
\begin{equation*}
    \fg_{i,j}^{n+1} \geq (\rhoinfs-\gamma_1)\chi_{1,j},\quad \forall (i,j)\in\II\times\J.
\end{equation*}
The remaining bounds can then be obtained following the same steps.
\end{proof}

\begin{proof}[Proof of Theorem \ref{theo_existence_bornes}]
We proceed by induction on $n$. The case $n=0$ is satisfied by assumption. Suppose now that there exists $(\fg^n,\ggot^n)$ satisfying bounds \eqref{bornes_f_dis} and \eqref{bornes_g_dis}. Then, we obtain from Lemma \ref{lem_existence_trunc} the existence of $(\fg^{n+1},\ggot^{n+1})$ solution to the truncated scheme \eqref{scheme_f_nonlin_trunc}-\eqref{scheme_g_nonlin_trunc}, which satisfies \eqref{bornes_f_dis} and \eqref{bornes_g_dis} according to Lemma \ref{lem_bornes_trunc}. But then, we have
\begin{align*}
    &\tilde{\fg}^{n+1} = \fg^{n+1},\quad \tilde{\ggot}^{n+1} = \ggot^{n+1},\\
    &\tilde{\rho_\fg}^{n+1} = \rho_\fg^{n+1},\quad \tilde{\rho_\ggot}^{n+1} = \rho_\ggot^{n+1},
\end{align*}
meaning that $(\fg^{n+1},\ggot^{n+1})$ is indeed a solution to the original nonlinear scheme \eqref{scheme_f_nonlin}-\eqref{scheme_g_nonlin}, which concludes the proof.
\end{proof}

\subsection{Local hypocoercivity result}\label{sec:hypoco_nonlin}

Thanks to the maximum principle estimates obtained in Theorem \ref{theo_existence_bornes}, we can now extend the decay result for the discrete linearized problem to a local result in the nonlinear case with the same method.

\begin{Theorem}\label{theo_hypoco_dis_nonlin}
Under the assumptions of Theorem \ref{theo_hypoco_dis_lin} and Theorem \ref{theo_existence_bornes}, with $\gamma_1$ and $\gamma_2$ small enough, a solution $\Fg^n=(\fg_{ij}^n,\ggot_{ij}^n)_{i\in\II,j\in\J}$ of the nonlinear scheme \eqref{scheme_f_nonlin}--\eqref{scheme_g_nonlin} satisfies for all $n\geq 0$
\begin{equation}
    \label{eq:expDecayNL}
    \|\Fg^n-\Fg^\infty\|_\Delta\leq C\,\|\Fg^0-\Fg^\infty\|_\Delta\,e^{-\kappa\,t^n},
\end{equation}
where the equilibrium $\Fg^\infty$ is defined by \eqref{def_eq_dis}. 

The constants $C$ and $\kappa$ do not depend on the size of the discretization $\Delta$.
\end{Theorem}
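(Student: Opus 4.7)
The plan is to mimic the continuous extension to the nonlinear case (the proof following Proposition~\ref{prop_hypoco_lin_cont} at the end of Section~\ref{sec:continuous}) by treating the nonlinear scheme as a perturbation of the discrete linearized scheme from Section~\ref{sec:hypoco_lin}, and using the maximum principle of Theorem~\ref{theo_existence_bornes} to make that perturbation small when $\gamma_1,\gamma_2$ are small.

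First I would set $\widetilde{\Fg}^n\coloneqq\Fg^n-\Fg^\infty$. Since the equilibrium $\Fg^\infty$ is independent of $(t,x)$ and lies in the kernel of $\LL^\Delta$ (in the sense that substituting it into the right-hand side of \eqref{scheme_f_nonlin}--\eqref{scheme_g_nonlin} gives the steady collision contribution), the scheme for $\widetilde{\Fg}^n$ reads
\begin{equation*}
\frac{\widetilde{\Fg}^{n+1}-\widetilde{\Fg}^n}{\Delta t}+\text{(transport of }\widetilde{\Fg}^{n+1}\text{)}=\LL^\Delta\widetilde{\Fg}^{n+1}+R^{n+1},
\end{equation*}
where the nonlinear remainder is, by the exact same algebraic identity as \eqref{eq_QFminusLFt_cont},
\begin{equation*}
R^{n+1}_{ij}=\begin{pmatrix}-(\rho_{\ggot,i}^{n+1}-(\rhoinfs)\inv)(\fg_{ij}^{n+1}-\rhoinfs\,\chi_{1,j})\\ -(\rho_{\fg,i}^{n+1}-\rhoinfs)(\ggot_{ij}^{n+1}-(\rhoinfs)\inv\chi_{2,j})\end{pmatrix}.
\end{equation*}
By Theorem~\ref{theo_existence_bornes}, the densities satisfy $|\rho_{\ggot,i}^{n+1}-(\rhoinfs)\inv|\leq\gamma$ and $|\rho_{\fg,i}^{n+1}-\rhoinfs|\leq\gamma$ for a constant $\gamma=\gamma(\gamma_1,\gamma_2)$ with $\gamma\to 0$ as $\gamma_1,\gamma_2\to 0$, hence pointwise $\|R^{n+1}\|_\Delta\leq\gamma\,\|\widetilde{\Fg}^{n+1}\|_\Delta$.

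I would then revisit the discrete energy and modified entropy estimates of Section~\ref{sec:hypoco_lin} applied to $\widetilde{\Fg}$. The microscopic coercivity estimate of Lemma~\ref{lem_coercivite_micro_dis} becomes
\begin{equation*}
\frac{1}{2}\!\left(\|\widetilde{\Fg}^{n+1}\|_\Delta^2-\|\widetilde{\Fg}^n\|_\Delta^2\right)+\Delta t\,C_{mc}^*\,\|(I-\Pi^\Delta)\widetilde{\Fg}^{n+1}\|_\Delta^2\leq\Delta t\,\langle R^{n+1},\widetilde{\Fg}^{n+1}\rangle_\Delta,
\end{equation*}
the moment scheme \eqref{scheme_Jh_lin} picks up additional source terms coming from the first velocity moment of $R^{n+1}$, and the auxiliary Poisson problem \eqref{eq_aux_dis} is unchanged since $R^{n+1}$ has zero density moment (by direct inspection of its components, after using $\int\chi_k=1$). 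Consequently, in the discrete analogue of the breakdown $\ddt H_\delta[F]=T_1+\delta(T_2+\cdots+T_5)$, every extra contribution from $R^{n+1}$ that appears (either through the $\langle R^{n+1},\widetilde{\Fg}^{n+1}\rangle_\Delta$ term or through the modified moment equation entering $T_{11}^n$ in the proof of Proposition~\ref{prop_evol_entropy_dis}) is controlled by $C\gamma\,\|\widetilde{\Fg}^{n+1}\|_\Delta^2$ via Cauchy--Schwarz, estimate \eqref{estim_dxphi_dis} and the bound on $R^{n+1}$. The discrete entropy--dissipation inequality thus becomes
\begin{equation*}
H_\delta^\Delta[\widetilde{\Fg}^{n+1}]-H_\delta^\Delta[\widetilde{\Fg}^n]\leq -\Delta t\,K_\delta\,\|\widetilde{\Fg}^{n+1}\|_\Delta^2+\Delta t\,C_\gamma\,\|\widetilde{\Fg}^{n+1}\|_\Delta^2,
\end{equation*}
with $C_\gamma\to 0$ as $\gamma_1,\gamma_2\to 0$.

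The main obstacle is precisely this last bookkeeping step: producing, for every new term arising from $R^{n+1}$ in the modified entropy estimate, an explicit bound of the form $C\gamma\,\|\widetilde{\Fg}^{n+1}\|_\Delta^2$ without generating any contribution that depends on derivatives of $R^{n+1}$ (which would not be small a priori). The structure of $R^{n+1}$ as a product of two factors, each of size $\mathcal{O}(\gamma)$ in $L^\infty$ thanks to Theorem~\ref{theo_existence_bornes}, is what makes this possible; the moment-of-$R$ terms entering the $J_h$-equation are handled via the same Cauchy--Schwarz trick used in Lemma~\ref{lem_mom_estim_disc}. Once $\gamma_1,\gamma_2$ are chosen small enough so that $C_\gamma<K_\delta/2$, one obtains
\begin{equation*}
H_\delta^\Delta[\widetilde{\Fg}^{n+1}]-H_\delta^\Delta[\widetilde{\Fg}^n]\leq -\frac{\Delta t\,K_\delta}{2}\,\|\widetilde{\Fg}^{n+1}\|_\Delta^2,
\end{equation*}
and the exponential decay \eqref{eq:expDecayNL} follows from the norm equivalence of Lemma~\ref{lem_eqnorm_dis} exactly as in the proof of Theorem~\ref{theo_hypoco_dis_lin}, with constants $C,\kappa$ independent of the discretization parameters.
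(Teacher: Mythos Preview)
Your proposal is correct and follows essentially the same route as the paper's proof: write the nonlinear scheme for $\widetilde{\Fg}=\Fg-\Fg^\infty$ as the linearized scheme plus a remainder $R^{n+1}$ identified via the discrete analogue of \eqref{eq_QFminusLFt_cont}, bound $\|R^{n+1}\|_\Delta\leq\gamma\|\widetilde{\Fg}^{n+1}\|_\Delta$ using Theorem~\ref{theo_existence_bornes}, check that the $u_{\hgt}$-continuity scheme is unchanged while the $J_{\hgt}$-scheme picks up an extra source (the paper's \eqref{scheme_Jh_nonlin}) controlled as in \eqref{estim_JfJg_dis}, and conclude that all extra contributions to the modified-entropy dissipation are of size $\mathcal{O}(\gamma)\|\widetilde{\Fg}^{n+1}\|_\Delta^2$, so that for $\gamma$ small one recovers the decay exactly as in Theorem~\ref{theo_hypoco_dis_lin}. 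One minor slip in wording: only \emph{one} factor in $R^{n+1}$ needs the $L^\infty$ bound of size $\gamma$ from the maximum principle (the density deviation); the other factor is precisely a component of $\widetilde{\Fg}^{n+1}$ and stays inside the weighted $L^2$ norm---but this does not affect your argument.
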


\begin{proof}
Let us first denote by $Q^\Delta(\fg,\ggot)$ the discrete nonlinear collision operator given by
\begin{equation}\label{eq_Q_disc}
    Q^\Delta(\fg,\ggot) = \begin{pmatrix}\chi_{1,j} - \rho_{\ggot,i}\fg_{ij} \\ \chi_{2,j} - \rho_{\fg,i}\ggot_{ij}\end{pmatrix}.
\end{equation}
Let us now rewrite the nonlinear scheme \eqref{scheme_f_nonlin}-\eqref{scheme_g_nonlin} in terms of $\widetilde{\Fg}_{ij}^n = \Fg_{ij}^n - \Fg_{j}^{\infty}$. Since the equilibrium $\Fg_{j}^{\infty}$ does not depend on position nor time, the discrete time derivative of $\widetilde{\Fg}$ satisfies
\begin{align}
    \frac{\widetilde{\Fg}_{ij}^{n+1}-\widetilde{\Fg}_{ij}^{n}}{\Delta t} +\frac{1}{\Delta x\,\Delta v} \begin{pmatrix}
    \widetilde{\mathcal{F}}_{\iph,j}^{n+1}-\widetilde{\mathcal{F}}_{\imh,j}^{n+1} \\
    \widetilde{\mathcal{G}}_{\iph,j}^{n+1}-\widetilde{\mathcal{G}}_{\imh,j}^{n+1}
    \end{pmatrix}
    = Q^\Delta(\fg^{n+1},\ggot^{n+1}),
\end{align}
where $\widetilde{\mathcal{F}}$ and $\widetilde{\mathcal{G}}$ are the numerical fluxes applied to $\widetilde{\fg}$ and $\widetilde{\ggot}$ respectively. In addition, one can compute the following relation
\begin{equation}\label{eq_QFminusLFt}
    Q^\Delta(\fg,\ggot) - \LL^\Delta\widetilde{\Fg} =\begin{pmatrix}
    -(\rho_{\ggot,i}-(\rhoinfs)\inv)(\fg - \rhoinfs\chi_{1,j})\\
    -(\rho_{\fg,i}-\rhoinfs)(\ggot -(\rhoinfs)\inv\chi_{2,j})
    \end{pmatrix}=\begin{pmatrix}
    -(\rho_{\ggot,i}-(\rhoinfs)\inv)\fgt\\
    -(\rho_{\fg,i}-\rhoinfs)\tilde{\ggot}
    \end{pmatrix}.
\end{equation}
In addition, multiplying by $\Delta v$ and summing over $j\in\J$ the bounds \eqref{bornes_f_dis} and \eqref{bornes_g_dis} obtained in Theorem \ref{theo_existence_bornes}, one gets
\begin{equation*}
    \left\lbrace\begin{aligned}
    \rhoinfs-\gamma_1 &\leq \rho_{\fg,i}^{n+1} \leq \rhoinfs+\gamma_2,\\
    (\rhoinfs+\gamma_2)\inv &\leq \rho_{\ggot,i}^{n+1} \leq (\rhoinfs-\gamma_1)\inv.
    \end{aligned}\right.
\end{equation*}
Therefore, by setting 
\begin{equation*}
    \gamma_3=\max(\gamma_1,\gamma_2) \quad\text{and}\quad \gamma_4=\max\left(\frac{\gamma_2}{\rhoinfs(\rhoinfs+\gamma_2)},\frac{\gamma_1}{\rhoinfs(\rhoinfs-\gamma_1)}\right),
\end{equation*}
one has for all $i\in\II$
\begin{equation}\label{eq_bounds_densities}
    |\rho_{\fg,i}^{n+1}-\rhoinfs| \leq\gamma_3 \quad\text{and}\quad |\rho_{\ggot,i}^{n+1}-(\rhoinfs)\inv| \leq\gamma_4.
\end{equation}
We are now able to estimate the norm of \eqref{eq_QFminusLFt}. Using the previous estimation \eqref{eq_bounds_densities} and setting $\gamma=\max(\gamma_3,\gamma_4)$ we obtain
\begin{equation}\label{estim_QFminusLFt_norm}
    \left\| Q(\fg^{n+1},\ggot^{n+1})-\LL^\Delta\widetilde{\Fg}^{n+1}\right\|_\Delta \leq \gamma\|\widetilde{\Fg}^{n+1}\|_\Delta.
\end{equation}

Then, by combining this estimate with Lemma \ref{lem_coercivite_micro_dis}, we obtain the following dissipation estimate
\begin{equation}\label{eq_estim_dissip_nonlin}
    \frac{1}{2}\left(\|\widetilde{\Fg}^{n+1}\|_\Delta^2-\|\widetilde{\Fg}^{n}\|_\Delta^2\right) \leq -\Delta tC_{mc}^*\|(I-\Pi^\Delta)\widetilde{\Fg}^{n+1}\|^2_\Delta + \Delta t\gamma\|\widetilde{\Fg}^{n+1}\|^2_\Delta.
\end{equation}
Defining, $\hgt\coloneqq\fgt-\ggt$, the rest of the proof unfolds as in the linear setting, except a slight modification on the right hand side of the moment scheme on $J_{\hgt}$:
\begin{align}
    & \frac{u_{\hgt,i}^{n+1}-u_{\hgt,i}^n}{\Delta t}+(D_x^cJ_{\hgt}^{n+1})_i-\frac{\Delta x\,\lambda}{2}\left((D_x^+D_x^-+D_x^-D_x^+)u_{\hgt}^{n+1}\right)_i=0, \label{scheme_uh_nonlin}\\
    &  \frac{J_{\hgt,i}^{n+1}-J_{\hgt,i}^n}{\Delta t}+(D_x^cS_{\hgt}^{n+1})_i+D_0^\Delta\,(D_x^cu_{\hgt}^{n+1})_i-\frac{\Delta x\,\lambda}{2}\left((D_x^+D_x^-+D_x^-D_x^+)J_{\hgt}^{n+1}\right)_i\nonumber \\
    & \qquad =-((\rhoinfs)\inv\,J_{\fgt,i}^{n+1}-\rhoinfs\,J_{\ggt,i}^{n+1})-\left((\rho_{\ggot,i}^{n+1}-(\rhoinfs)\inv)J_{\fgt,i}^{n+1}-(\rho_{\fg,i}^{n+1}-\rhoinfs)J_{\tilde{\ggot},i}^{n+1}\right).\label{scheme_Jh_nonlin}
\end{align}
Using the bounds \eqref{eq_bounds_densities} and the same computations as in the proof of \eqref{estim_JfJg_dis}, one can estimate the norm of the additional term so that
\begin{equation}\label{estim_J3_nonlin}
    \|(\rho_{\ggot}^{n+1}-(\rhoinfs)\inv)J_{\fgt}^{n+1}-(\rho_{\fg}^{n+1}-\rhoinfs)J_{\tilde{\ggot}}^{n+1}\|_2\leq\gamma C_{J1}^*\|(I-\Pi^\Delta)\widetilde{\Fg}^{n+1}\|_\Delta.
\end{equation}
Next, looking at the discrete time derivative of $H_\delta^\Delta$, the same computations as in the linearized setting lead to the relation
\begin{equation*}
    H_\delta[\widetilde{\Fg}^{n+1}] - H_\delta[\widetilde{\Fg}^{n}] \leq \frac{1}{2}\left(\|\widetilde{\Fg}^{n+1}\|_\Delta^2 - \|\widetilde{\Fg}^{n}\|_\Delta^2\right) + \delta \sum_{k=1}^4 \tilde{T}_{11k}^n + \delta \tilde{T}_{115}^n + \delta \tilde{T}_{12}^n,
\end{equation*}
where $\tilde{T}_{11k}^n$, $k=1,\dots,4$ and $\tilde{T}_{12}^n$ are defined as $T_{11k}^n$, $k=1,\dots,4$ and $T_{12}^n$ in the proof of Proposition \ref{prop_evol_entropy_dis}, replacing $F$ by $\widetilde{\Fg}$. The additional term $\tilde{T}_{115}^n$ comes from the right-hand side of \eqref{scheme_Jh_nonlin} and is given by
\begin{equation*}
    \tilde{T}_{115}^n = \langle (\rho_{\ggot}^{n+1}-(\rhoinfs)\inv)J_{\fgt}^{n+1}-(\rho_{\fg}^{n+1}-\rhoinfs)J_{\tilde{\ggot}}^{n+1}, D_x^c\Phi^{n+1}\rangle_2.
\end{equation*}
This term can then be estimated using a Cauchy-Schwarz inequality, \eqref{estim_J3_nonlin} and \eqref{estim_dxphi_dis}, producing a cross-term
\begin{equation}\label{estim_T6_nonlin}
    \left|\tilde{T}_{115}^n\right| \leq \gamma C_{J1}^* C_P C_u^* \|(I-\Pi^\Delta)\widetilde{\Fg}^{n+1}\|_\Delta \|\Pi^\Delta\widetilde{\Fg}^{n+1}\|_\Delta.
\end{equation}
Proceeding as in the proof of Proposition \ref{prop_evol_entropy_dis}, choosing an admissible $\delta>0$, we can then establish a similar entropy dissipation as before, but with an additional positive term
\begin{equation}
    H_\delta^\Delta\left[\Fgt^{n+1}\right]-H_\delta^\Delta\left[\Fgt^n\right]\leq -\Delta t\,K_{\delta}\,\left\|\Fgt^{n+1}\right\|_\Delta^2+\Delta t \gamma\left\|\Fgt^{n+1}\right\|_\Delta^2.
\end{equation}
Finally, choosing $\gamma$ small enough, that is such that $\gamma <K_\delta$,
we can conclude the proof as before, using Lemma \ref{lem_eqnorm_dis}.
\end{proof}

\section{Numerical results}\label{sec:numeric}

In this section we shall present numerical results for both the linearized and nonlinear schemes we presented and analyzed in the previous sections. We shall do it for a variety of initial data and equilibrium profiles, in order to showcase both the accuracy and the robustness of our approach. 
Unless stated otherwise, we will always take $v^* = 12$ and $L = 16$ half points in velocity (and have then $32$ control volumes in the velocity space) and $N = 101$ spatial cells for the torus $[0, \pi]$. Due to the unconditional stability of our implicit approach, we shall take the rather large time step $\Delta t = 0.1$ for the linear case. The nonlinear case being more intricated, we shall use an adaptative time stepping to optimize the number of iterations needed in the Newton-Raphson solver for the scheme \eqref{scheme_f_nonlin}--\eqref{scheme_g_nonlin}. Note that due to the relative simplicity of this method, the Jacobian involved in this solver is exact.

\myParagraph{Equilibrium profiles}
We shall consider three different equilibrium densities in our upcoming numerical experiments: a classical centered reduced Gaussian
\begin{equation}
\label{eq:gaussian}
    \chi_\mathcal{M}(v) := \frac1{\sqrt{2 \pi}} \, e^{-| v|^2/2}, \quad \forall v \in [-v^*,v^*];
\end{equation}
a polynomially decaying function 
\begin{equation}
    \label{eq:heavytailed}
    \chi_\mathcal{P}(v) := \frac{1}{1+|v|^4}, \quad \forall v \in [-v^*,v^*];
\end{equation}
a polynomially oscillating function
\begin{equation}
    \label{eq:oscilheavytailed}
    \chi_\mathcal{O}(v) := \frac{\cos(\pi \, v) + 1.1}{1+|v|^6}, \quad \forall v \in [-v^*,v^*].
\end{equation}

\begin{Remark}
    Note that the distribution $\chi_\mathcal{P}$ has only bounded moments up to order $3$, instead of the $4$ needed for Theorem \ref{theo_hypoco_dis_lin} to hold. Nevertheless, we observe in the upcoming numerical experiments that this is not an issue for recovering an exponential decay of our discrete solutions. We infer that this hypothesis is only technical. This is in total agreement with the continuous theorem from \cite{NeumannSchmeiser2016} where this hypothesis is not required.
\end{Remark}

\subsection{Discrete hypocoercivity of the linearized scheme}
\label{sec:subLin}    
    Let us start by investigating the hypocoercive behavior of the linear scheme \eqref{scheme_f_lin}-\eqref{scheme_g_lin}. 
    We shall compare the three fluxes introduced in Section \ref{sec:discrete_setting}, namely the monotone Lax-Friedrichs \eqref{flux_F}-\eqref{flux_G} and upwind \eqref{flux_F_U}-\eqref{flux_G_U}, and the nonmonotone central fluxes \eqref{flux_F_C}-\eqref{flux_G_C}.
    
    \myParagraph{Test 1. Large time behavior with the same equilibrium}
    We first present an experiment where the distributions $\chi_1$ and $\chi_2$ are both fixed as the heavytailed profile $\chi_\mathcal{P}$. We choose as initial data some far from equilibrium distributions given for $x\in[0, \pi]$, $v \in [-v^*,v^*]$ by
    \begin{equation}
        \label{eq:initData}
        \left \{
        \begin{aligned}
            & f_I(x,v) = \exp\left( -((x-\pi/2)^2 + v^2/2)/0.2 \right)/0.1, \\
            & g_I(x,v) = \left(1+\cos(4x)\right) \exp\left( -((x-\pi/2)^2 + v^2/2)/0.2 \right).
        \end{aligned}
        \right.
    \end{equation}

    \begin{figure}
        \centering
        \includegraphics[width=.99\linewidth]{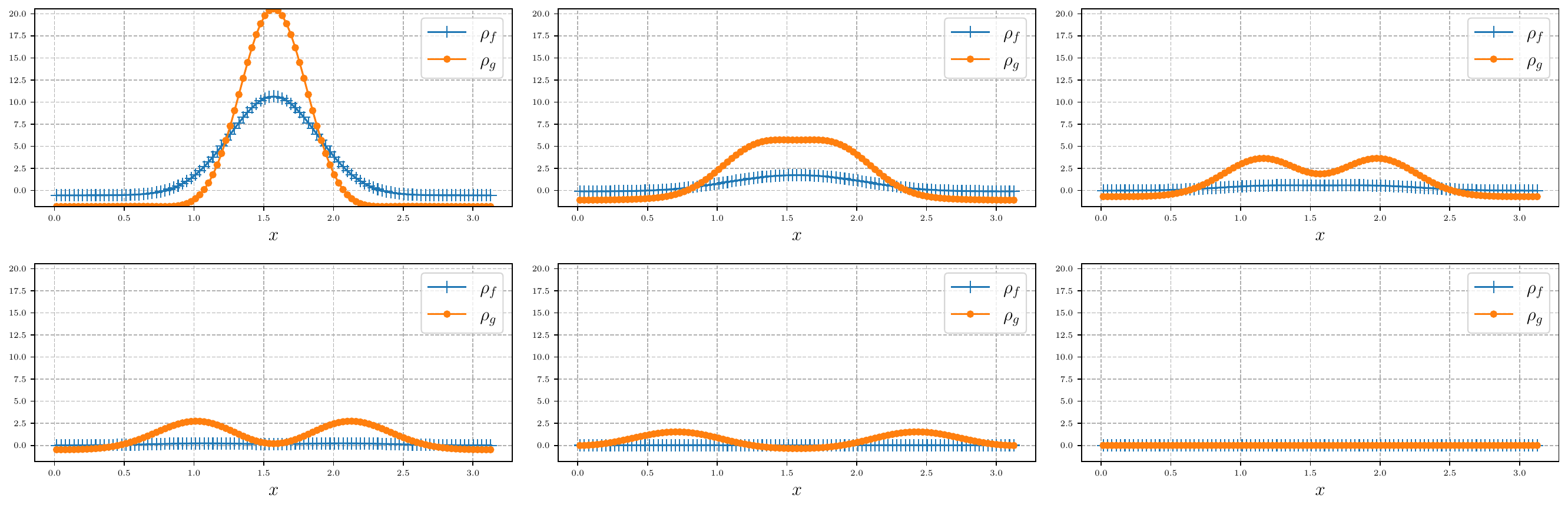}
        \caption{\textbf{Test 1. Large time behavior of the linearized scheme, same equilibria $\chi_\mathcal{P}$.} Snapshots of the densities of the two species at time $t=0$, $0.8$, $1.2$, $1.6$, $2.5$, $50$, using the Lax-Friedrichs fluxes \eqref{flux_F}-\eqref{flux_G}.}
        \label{fig:linPerturbDensities1}
    \end{figure}
     
    We observe in Figure \ref{fig:linPerturbDensities1} that the densities associated with $F$ both converge nicely toward the global equilibrium, without spurious oscillations or loss of the global mass difference.
    
    \begin{figure}
        \centering
        \includegraphics[width=.99\linewidth]{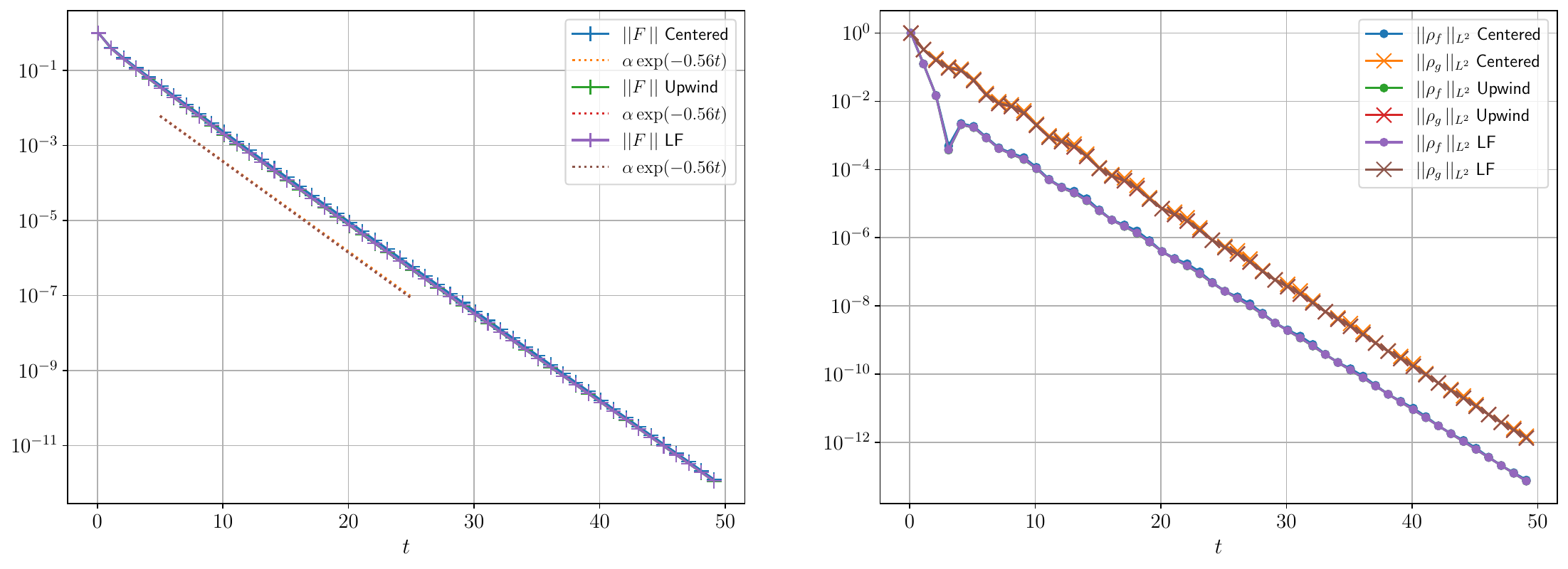}
        \caption{\textbf{Test 1. Large time behavior of the linearized scheme, same equilibria $\chi_\mathcal{P}$.} Time evolution of the weighted $L^2$ norm of the solution $F$ to the linearized problem \eqref{scheme_f_lin}-\eqref{scheme_g_lin} (left) and $L^2$ norm of the densities, for three different fluxes function (right).} 
        \label{fig:trend2EquLin1}
    \end{figure}    
    
    Figure \ref{fig:trend2EquLin1} presents the hypocoercive behavior of our numerical methods. We observe on the left part of the figure that the expected exponential decay of the weighted $L^2$-norm of the solution $F$ to the linear scheme \eqref{scheme_f_lin}-\eqref{scheme_g_lin} holds.
     We notice that the rate of decay is independent on the choice of the numerical flux. This is not unexpected, since monotonicity properties were not crucial in our proof of Theorem \ref{theo_hypoco_dis_lin}. We shall see in the next subsection that this conclusion is not valid in the nonlinear case, where monotonicity was paramount for obtaining the result.
    
    The right part of the figure illustrates the time evolution of the $L^2$-norms of the densities $\rho_f$ and $\rho_g$. These objects are not expected to decay monotonically whatsoever. We nevertheless observe an exponential decay of the upper envelope of these quantities, which expresses the rapid equilibration of the densities toward the same global mass, a consequence of the global preservation of the mass difference.

    \myParagraph{Test 2. Large time behavior with different equilibria}
        Using the same initial datum \eqref{eq:initData} than in the previous test case, we present now simulations of our scheme with $\chi_1 = \chi_\mathcal{P}$ and $\chi_2 = \chi_\mathcal{O}$. 
        The use of the two different equilibrium profiles will enrich the dynamics of this linear problem, showcasing the ability of our approach to deal with multiple species of particles.
        
    \begin{figure}
        \centering     
        \begin{tabular}{c} $f(t,x,v)+\fg_\infty(t,x,v)$ \\
        \includegraphics[width=.97\linewidth]{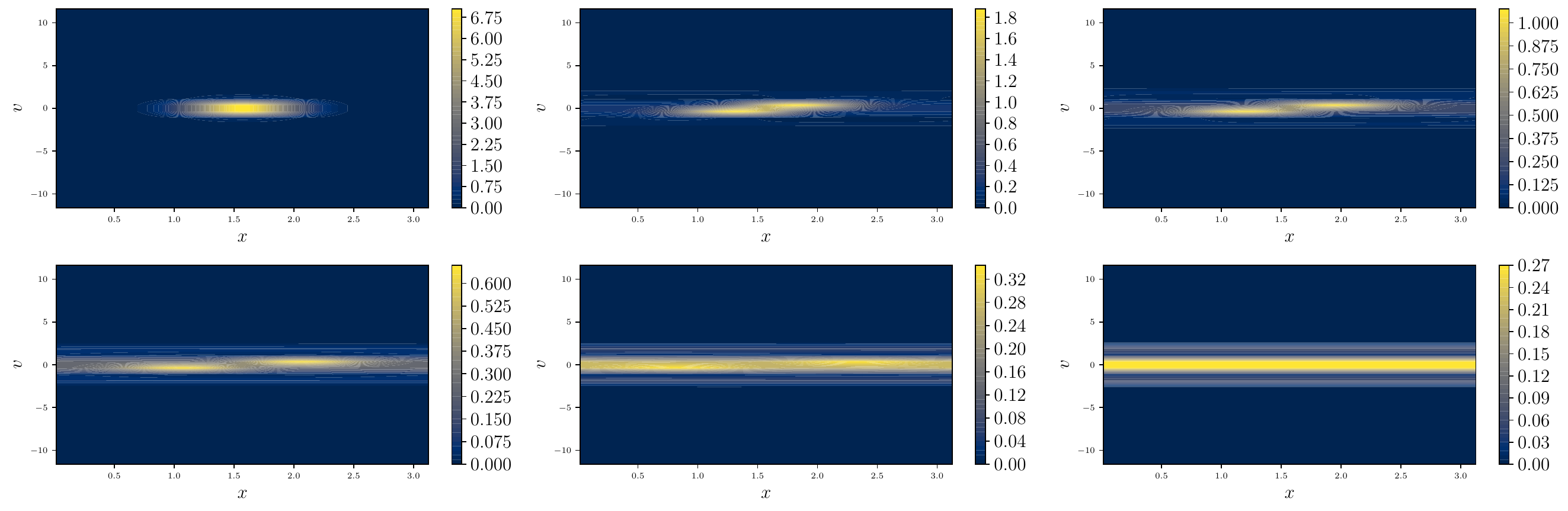} \\
        $g(t,x,v)+\ggot_\infty(t,x,v)$ \\
        \includegraphics[width=.97\linewidth]{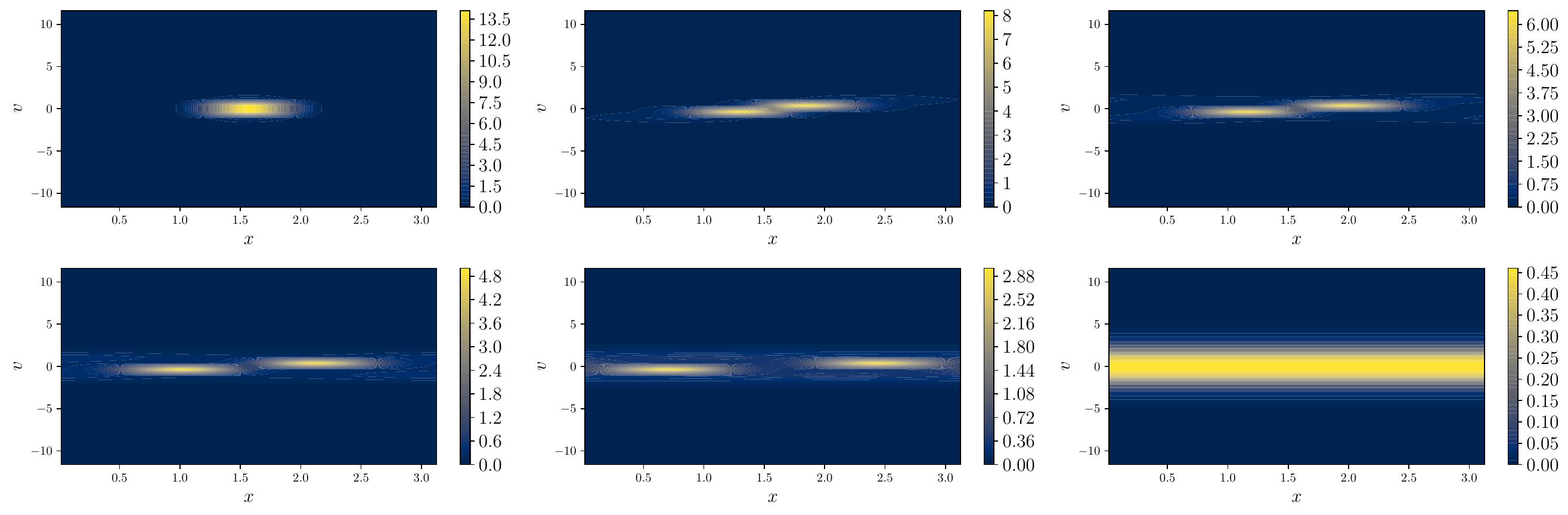}
        \end{tabular}        
        \caption{\textbf{Test 2. Large time behavior of the linearized scheme, different equilibria.} Snapshots of the distribution function of each species at time $t=0$, $0.8$, $1.2$, $1.6$, $2.5$, $50$, using the Lax-Friedrichs fluxes \eqref{flux_F}-\eqref{flux_G}.}
        \label{fig:linPerturbDensities2}
     \end{figure}
 
         Figure \ref{fig:linPerturbDensities2} presents the isosurfaces in the $(x,v)$-plane of the distribution functions $f+\fg_\infty$ and $g+\ggot_\infty$ at different times, using the Lax-Friedrichs flux \eqref{flux_F}-\eqref{flux_G}. We observe the expected relaxation of the initial data of each species towards the correct global equilibrium in large time, without any oscillations or loss of the global mass difference.

    \begin{figure}
        \centering
        \includegraphics[width=.99\linewidth]{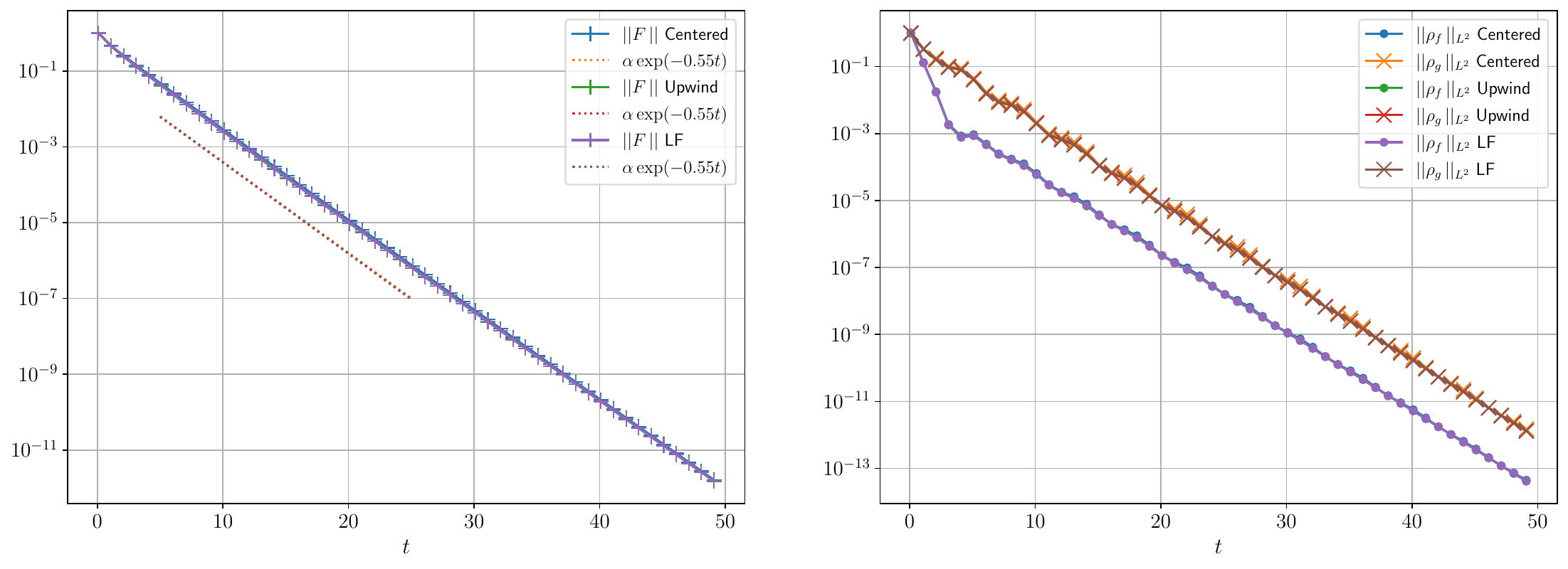}
        \caption{\textbf{Test 2. Large time behavior of the linearized scheme, different equilibria.} Time evolution of the weighted $L^2$ norm of the solution $F$ to the linearized problem \eqref{scheme_f_lin}-\eqref{scheme_g_lin} (left) and $L^2$ norm of the densities, for three different fluxes.}         \label{fig:trend2EquLin2}
    \end{figure}      
    
    The left part of Figure \ref{fig:trend2EquLin2} presents the large time behavior of the weighted $L^2$-norm of $F$, for the three possible choices of flux functions. Once more, we observe the result predicted by Theorem  \ref{theo_hypoco_dis_lin}, namely exponential decay of this quantity towards $0$, even with the heavytailed distribution $\chi_\mathcal{P}$. The choice of the flux (monotone or not) has no impact on this rate of decay.
    The right part of this figure presents the time evolution of the $L^2$-norms of $\rho_f$ and $\rho_g$. We observe once more some small oscillations on these quantities, that are not expected to decay monotonically. Nevertheless, a rapid exponential-like convergence towards $0$ happens also in this more complicated test case.
    
\subsection{Discrete hypocoercivity of the nonlinear scheme}
\label{subsec:DisHypo}
    We now present simulations of the full nonlinear scheme  \eqref{scheme_f_nonlin}-\eqref{scheme_g_nonlin}. The numerical parameters are the same than before, except for the time step, that we choose to be adaptive for the sake of stability of the Newton-Raphson solver. 
    The initial time step is set up at $\Delta t = 110^{-3}$, and it is iteratively doubled according to the behavior of the solver. Indeed, the Newton-Raphson solver used to update the solution to the scheme may break the global conservation of mass difference between the species for a large time  step when the solution is far from equilibrium if its tolerance is too low. Nevertheless, we observe a rapid growth of the time step during the length of the simulation, up to values of order $0.3$. Note that during this refinement process, this value always remains smaller than the critical one needed for  monotonicity of the Lax-Friedrichs flux in Theorem \ref{theo_hypoco_dis_nonlin}, hence enforcing the idea that monotonicity is necessary for discrete hypocoercivity.     
         
    \myParagraph{Test 3. Large time behavior with the same equilibrium}
    
    As a first numerical experiment for this nonlinear model we consider a same Gaussian equilibrium $\chi_\mathcal{M}$ for both species, and a smooth initial data:
    \[
        \left \{\begin{aligned}
            & f_I(x,v) = \chi_\mathcal{M}(v) \, v^4 \left (1+\cos(2 x)\right), \\
            & g_I(x,v) = \left(1+\cos(4x)\right)\, \exp\left( -((x-\pi/2)^2 + v^2/2)/0.2 \right).
        \end{aligned} \right.
    \]
    
    \begin{figure}
        \centering
        \begin{tabular}{c} $\fg(t,x,v)$ \\
            \includegraphics[width=.97\linewidth]{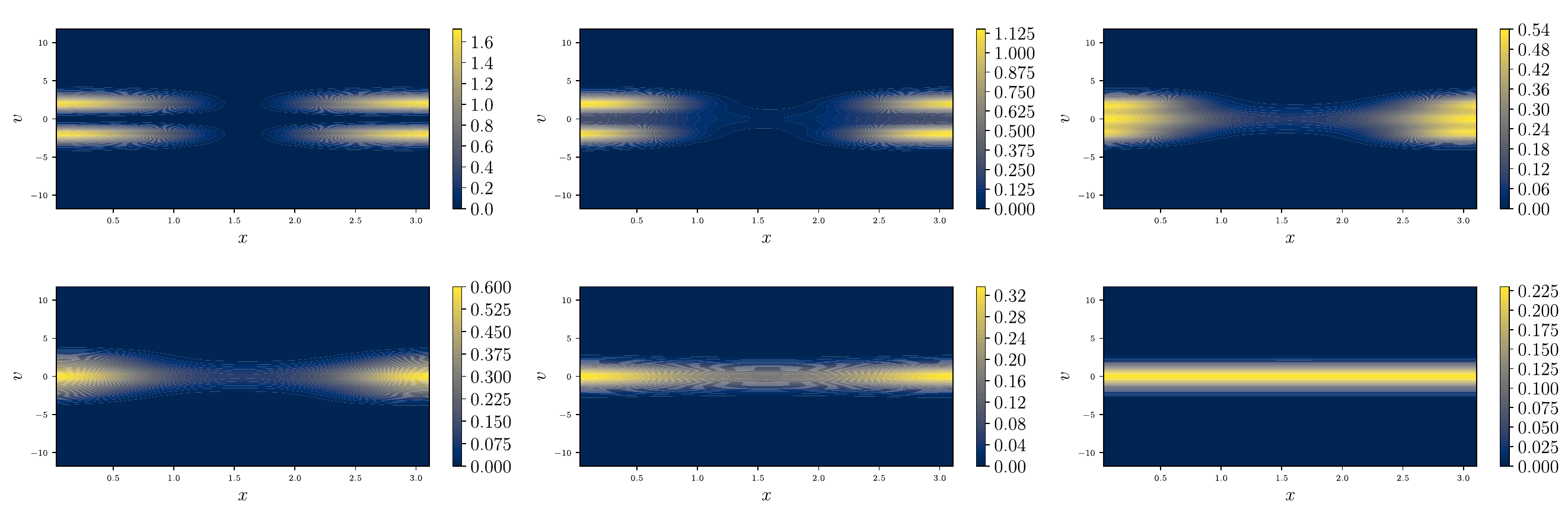} \\
            $\ggot(t,x,v)$\\
            \includegraphics[width=.97\linewidth]{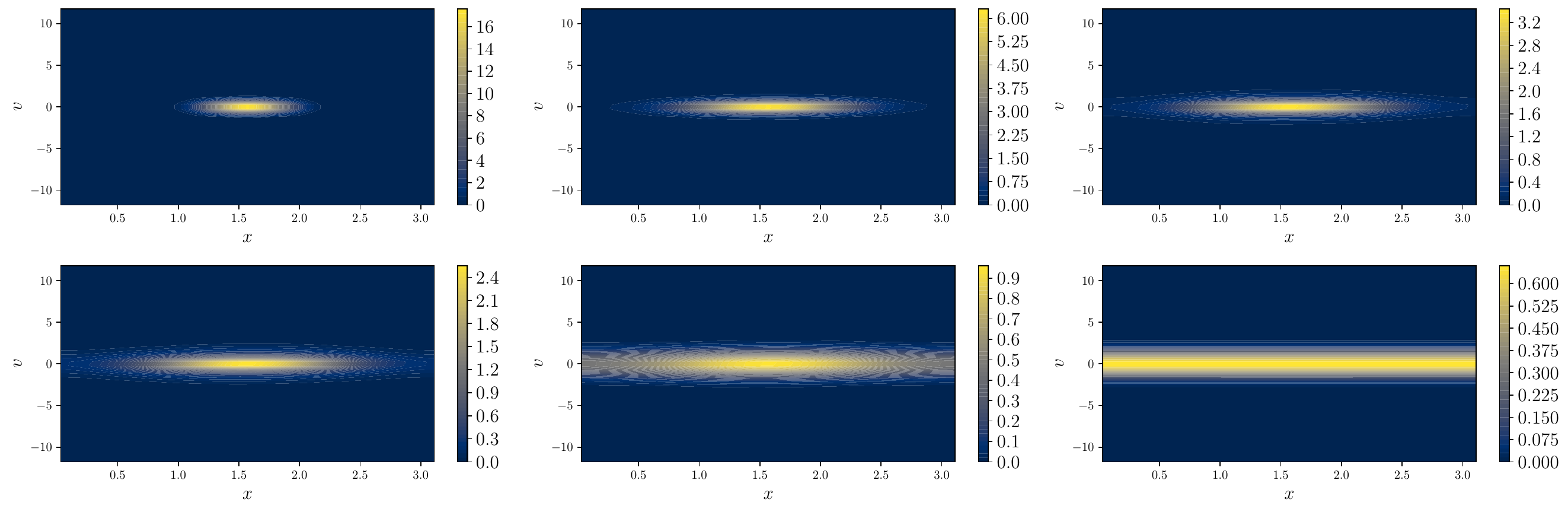}
        \end{tabular}
        \caption{\textbf{Test 3. Large time behavior of the nonlinear scheme.} Snapshots of the distribution function of each species at time $t=0$, $0.83$, $2.25$, $3.35$, $9.67$, $100$, using the Lax-Friedrichs fluxes \eqref{flux_F}-\eqref{flux_G}.}
        \label{fig:NLDistfunctionsSameEq}
    \end{figure} 
    
    We present in Figure \ref{fig:NLDistfunctionsSameEq} some snapshots of the time evolution of the particle distribution function in the $(x,v)$-phase space, for each species $\fg$ and $\ggot$. As in the linear case, we observe a rapid convergence of the far from equilibrium initial data towards the equilibrium distributions $\fg_\infty$ and $\ggot_\infty$, without any spurious oscillations or loss of global mass difference.
    
    \begin{figure}
        \centering
        \includegraphics[width=.99\linewidth]{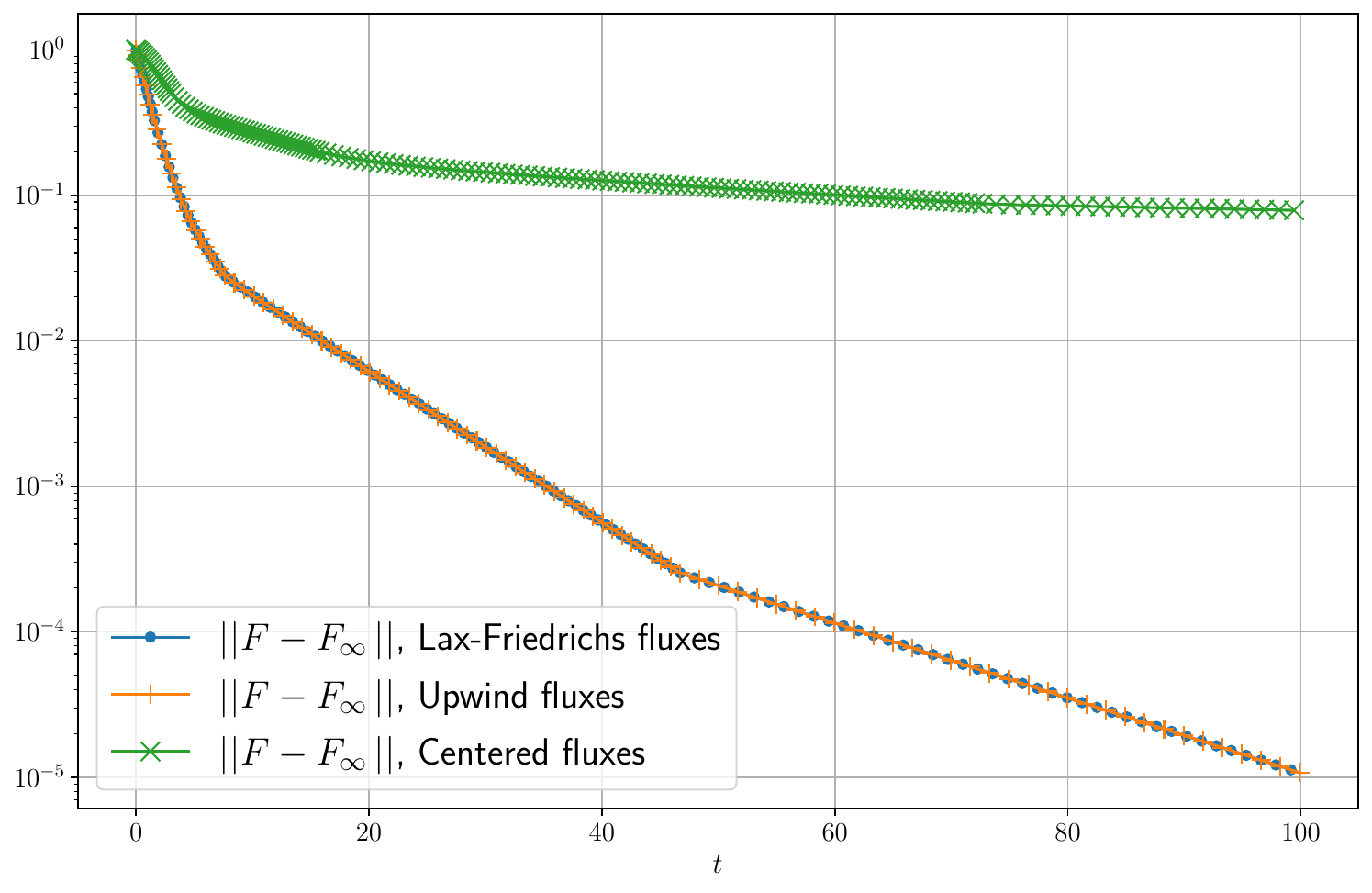}
        \caption{\textbf{Test 3. Large time behavior of the nonlinear scheme.} Trends to equilibrium of $\Fg$ towards $\Fg_\infty$ in the weighted $L^2$-norm, for the monotone fluxes.}
        \label{fig:NLtrend2Equ}
    \end{figure}
    
    Figure \ref{fig:NLtrend2Equ} illustrates the  nonlinear hypocoercivity result established in Theorem \ref{theo_hypoco_dis_nonlin}. It presents the large time behavior of the weighted $L^2$-norm of $\Fg$, for the three possible choices of flux functions. We observe in that particular numerical experiment a huge departure from the simpler linear setting, as well as a confirmation of the fact that monotone fluxes would be an optimal assumption in that result. 
    Indeed, one can see on that figure that the exponential decay \eqref{eq:expDecayNL} of $\|\Fg^n-\Fg^\infty\|_\Delta$ 
    occurs only for the monotone Lax-Friedrichs  \eqref{flux_F}-\eqref{flux_G} and upwind  \eqref{flux_F_U}-\eqref{flux_G_U} fluxes, and not for the nonmonotone centered fluxes. While the former exhibits the expected exponential decay, the latter  will saturate at a value which is dictated by the loss of global mass difference induced by the choice of flux. 
    
    \myParagraph{Test 4. Large time behavior with different equilibria}

        In order to showcase the robustness of our scheme, we  finally choose as an initial condition a fully random uniform initial data for both species. Note here that such initial data does not fulfill the bounds needed in Theorem \ref{theo_hypoco_dis_nonlin} for the result to hold, making them seemingly purely technical. The profiles chosen in this test are the heavytailed  $\chi_1 = \chi_\mathcal{P}$ and $\chi_2 = \chi_\mathcal{O}$. 
    
    \begin{figure}
        \centering
        \begin{tabular}{c} $\fg(t,x,v)$ \\
            \includegraphics[width=.97\linewidth]{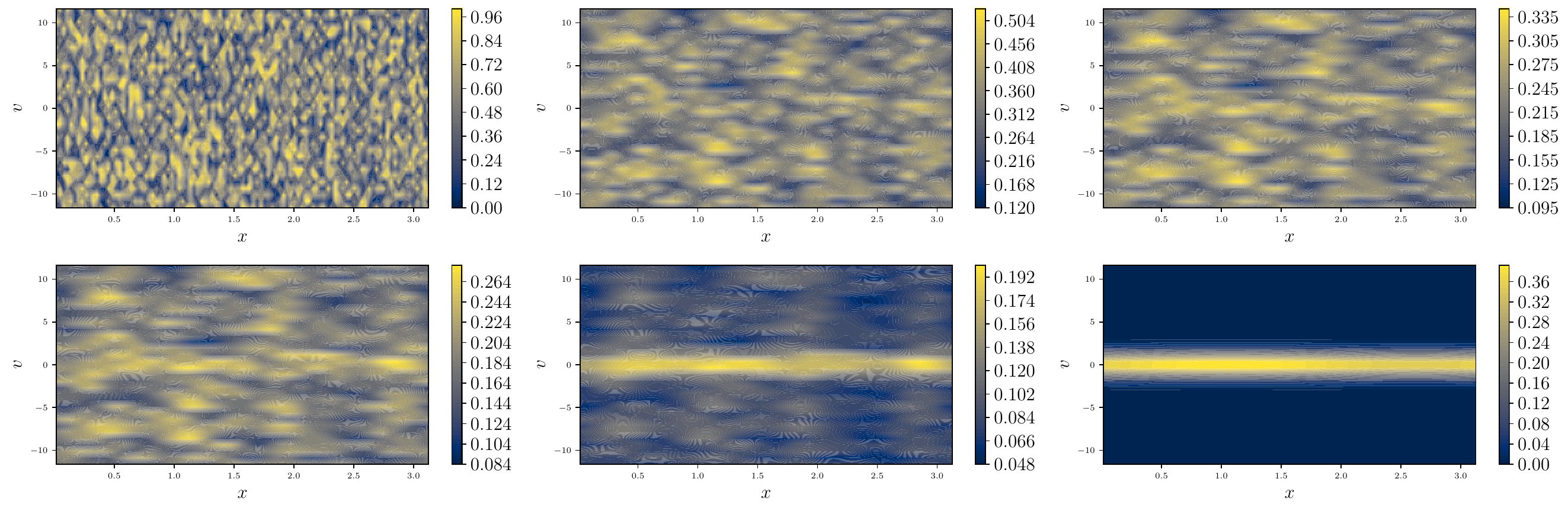} \\
            $\ggot(t,x,v)$\\
            \includegraphics[width=.97\linewidth]{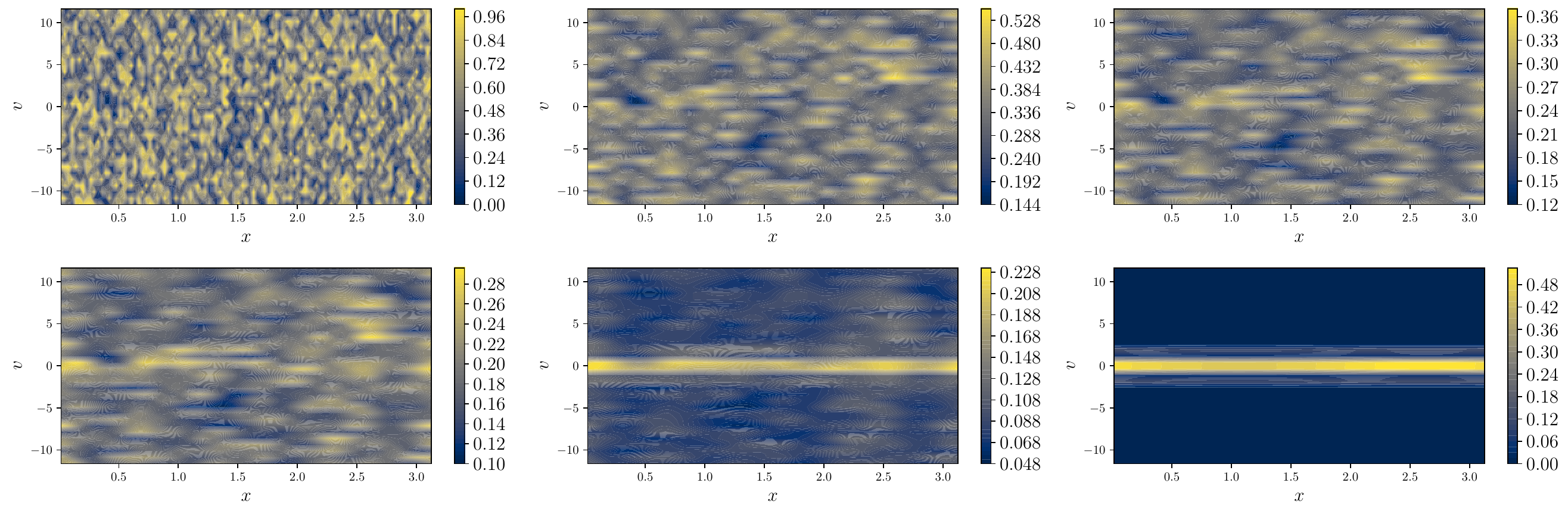}
        \end{tabular}
        \caption{\textbf{Test 4. Large time behavior of the nonlinear scheme.} Snapshots of the distribution function of each species at time $t=0$, $0.16$, $0.38$, $0.77$, $1.66$, $49.9$, using the Lax-Friedrichs fluxes \eqref{flux_F}-\eqref{flux_G}.}
        \label{fig:NLDistfunctionsDiffEq}
    \end{figure} 
    
    We present in Figure \ref{fig:NLDistfunctionsDiffEq} some snapshots of the time evolution of the particle distribution function in the $(x,v)$-phase space, for each species $\fg$ and $\ggot$. We observe again a rapid convergence of the far from equilibrium initial data towards the equilibrium distributions $\fg_\infty$ and $\ggot_\infty$. 
    No spurious oscillations appear with the monotone Lax-Friedrichs fluxes and  the global mass difference is preserved up to machine precision in this test.  
    
    \begin{figure}
        \centering
        \includegraphics[width=.99\linewidth]{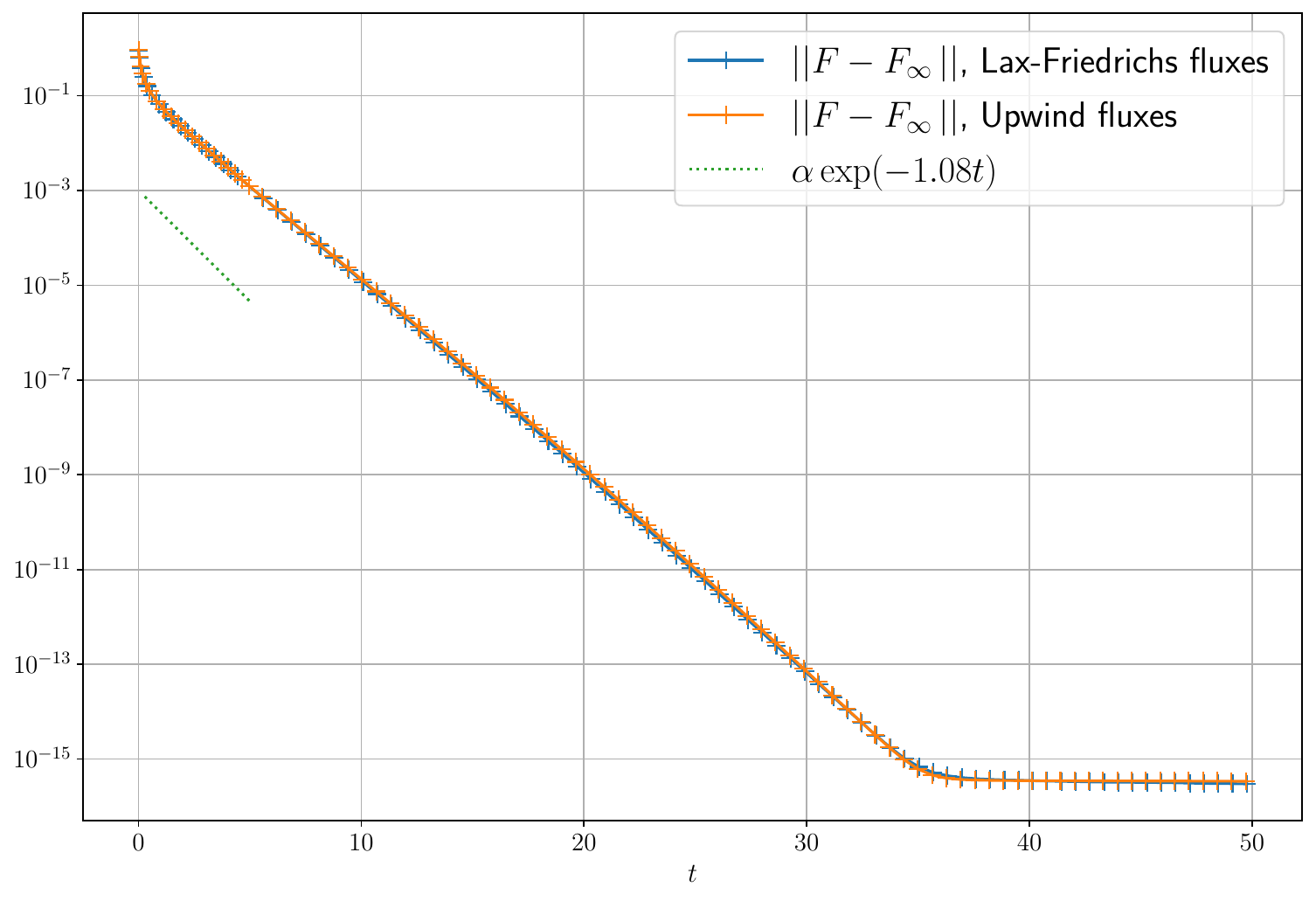}
        \caption{\textbf{Test 4. Large time behavior of the nonlinear scheme.} Trends to equilibrium of $\Fg$ towards $\Fg_\infty$ in the weighted $L^2$-norm, for the monotone fluxes.}
        \label{fig:NLtrend2EquFar}
    \end{figure}
    
    These observations are confirmed in Figure \ref{fig:NLtrend2EquFar}, where we present the time evolution of $\|\Fg^n-\Fg^\infty\|_\Delta$. Note that this far from equilibrium case is not covered by Theorem \ref{theo_hypoco_dis_nonlin}. Nevertheless, this quantity exhibits a fast exponential decay towards $0$, but only for monotone fluxes. We did not chose to present the case of centered fluxes, because we were not able to have a convergent Newton solver in that particular case. This is certainly due to the loss of mass induced by nonmonotone fluxes with such a singular initial data.
    This test case hence emphasizes our claim about the nonoptimality of the bounds needed in Theorem \ref{theo_existence_bornes}.

    \section*{Acknowledgement}
    M. Bessemoulin-Chatard benefits from the support of the French government ``Investissements d’Avenir'' program integrated to France 2030, bearing the following reference ANR-11-LABX-0020-01, and by the ANR Project Muffin (ANR-19-CE46-0004). 
    T. Laidin acknowledges support from Labex CEMPI (ANR-11-LABX-0007-01).
    T. Rey received funding from the European Union's Horizon Europe research and innovation program under the Marie Sklodowska-Curie Doctoral Network Datahyking (Grant No. 101072546), and is supported by the French government, through the UCA$_{JEDI}$ Investments in the Future project managed by the National Research Agency (ANR) with the reference number ANR-15-IDEX-01.

\bibliographystyle{acm}
\bibliography{biblio_hypoco_system}

\end{document}